\DeclareMathOperator{\Ad}{Ad}
\DeclareMathOperator{\di}{diag}
\DeclareMathOperator{\odi}{off-diag}
\newcommand{\bbar}{\begin{pmatrix}}
\newcommand{\ebar}{\end{pmatrix}}
\newcommand{\bdm}{\begin{displaymath}}
\newcommand{\edm}{\end{displaymath}}
\newcommand{\beq}{\begin{equation}}
\newcommand{\beqa}{\begin{eqnarray}}
\newcommand{\beqas}{\begin{eqnarray*}}
\newcommand{\eeq}{\end{equation}}
\newcommand{\eeqa}{\end{eqnarray}}
\newcommand{\eeqas}{\end{eqnarray*}}
\newcommand{\dd}{\textup{d}}
\newcommand{\E}{{\mathbb E}}
\newcommand{\HH}{{\mathbb H}}
\newcommand{\LL}{{\mathbb L}}
\newcommand{\SSS}{{\mathbb S}}
\newcommand{\he}{\mathcal{H}}
\newcommand{\lhe}{\mathfrak{h}}
\newcommand{\R}{{\mathbb R}}
\newcommand{\C}{{\mathbb C}}
\newcommand{\D}{{\mathbb D}}
\newcommand{\real}{{\mathbb R}}
\newcommand{\Z}{{\mathbb Z}}
\newcommand{\Nil}{\hbox{Nil}^3}
\newcommand{\Nili}{{\hbox{Nil}^3_1}}
\newcommand{\nili}{{\mathfrak{nil}^3_1}}
\newcommand{\SU}{SU(2)}
\newcommand{\su}{{\mathfrak {su}}(2)}
\newcommand{\LSU}{\Lambda SU(2)_\sigma}
\newcommand{\LSL}{\Lambda SL(2,  \mathbb C)_{\sigma} }
\newcommand{\LSLP}{\Lambda^+ SL(2, \mathbb C)_{\sigma} }
\newcommand{\LSLN}{\Lambda^-_* SL(2, \mathbb C)_{\sigma} }
\newcommand{\Up}{U_{\mathfrak{p}}}
\newcommand{\Uk}{U_{\mathfrak{k}}}
   \newtheorem{theorem}{Theorem}[section]
   \newtheorem{corollary}[theorem]{Corollary}
   \newtheorem{lemma}[theorem]{Lemma}
   \newtheorem{definition}[theorem]{Definition}
 \theoremstyle{remark}
   \newtheorem{example}[theorem]{Example}
   \newtheorem{remark}[theorem]{Remark}
\numberwithin{equation}{section}
\begin{document}

\title[Maximal surfaces in the Heisenberg group]{Maximal surfaces in the Lorentzian Heisenberg group}

\begin{abstract}
The 3-dimensional Heisenberg group can be equipped with three different types of  left-invariant Lorentzian metric, according to whether the center of the Lie algebra is spacelike, timelike or null. Using the second of these types, we study
spacelike surfaces of mean curvature zero.  These surfaces with singularities are associated with
harmonic maps into the 2-sphere.  We show that the generic singularities are cuspidal edge, swallowtail and cuspidal cross-cap. We also give the loop group construction for these surfaces, and the criteria on the loop group potentials for the different generic singularities. Lastly, we solve the Cauchy problem for
harmonic maps into the 2-sphere using loop groups,
and use this to give a geometric characterization of the singularities. We use these results to prove
that a regular spacelike maximal disc with null boundary must have at least
two cuspidal cross-cap singularities on the boundary.
 \end{abstract}

\author{David Brander}
\address{Department of Applied Mathematics and Computer Science, Technical University of Denmark \\
Richard Petersens Plads, Building 324,  DK-2800, Kgs. Lyngby,  Denmark}
\email{dbra@dtu.dk}

\author{Shimpei Kobayashi}
\address{Department of Mathematics, Hokkaido University\\
Nishi 8-Chome  Kita 10-Jou Kita-Ku\\
Sapporo 060-0810, Japan}
\email{shimpei@math.sci.hokudai.ac.jp}
\thanks{The first named author is partially suported by Independent Research Fund Denmark, grant 9040-00196B.
The second named author is partially supported by Kakenhi 22K03265.}

\keywords{Differential geometry, harmonic maps, loop groups, maximal surfaces, Heisenberg group, singularities}
\subjclass[2020]{Primary~53A10; Secondary~53C42,~53C43}


\maketitle

\section{Introduction}
\subsection{Background}
Surfaces of mean curvature zero in the Heisenberg group $\he$, equipped with
a left-invariant Riemannian metric, denoted here $\Nil$, have  been studied by many authors. Berdinskii and Taimanov \cite{BerTai2005} gave a spinor type representation, similar to other so-called integrable classes of surfaces such as constant mean curvature surfaces in space forms.  One can show (\cite{figueroa1999,figueroa2007,daniel2011}) that
they are related to harmonic maps into the hyperbolic space $\HH^2$ as follows: the Gauss map of an immersion $f$ into $\Nil$ is the left translation $N=f^{-1}n$  of
the unit normal. The surface is called \emph{vertical} at a point $p$ if $N_p$ is parallel to the $e_1e_2$-plane. The Gauss map of a nowhere vertical minimal surface in $\Nil$ is harmonic into the hemisphere equipped with the hyperbolic metric, and conversely a nowhere holomorphic harmonic map into $\HH^2$ is the Gauss map of a minimal surface.
This allows one to use the theory of harmonic maps to study these surfaces (see, e.g., \cite{cartier2011, dik2016}).

If one considers now Lorentzian metrics on $\he$, Rahmani \cite{rahmani1992} showed that there are  (up to homothety) three different choices of left-invariant metric (see Section \ref{metricsect} below), 
$g_1$, $g_2$ and $g_3$, according to whether the $1$-dimensional center of the Lie algebra is spacelike, timelike or null in the metric. A classical Weierstrass-type representation exists
for mean curvature zero surfaces in such spaces (\cite{lmm2011,cmo2017}), but the Weierstrass data need to satisfy non-trivial extra conditions; hence
only simple examples of mean curvature zero surfaces in
these manifolds have been given. 
For spacelike surfaces in the case of the metric $g_2$ (see \cite{hlee2011} and the present article), and for timelike surfaces with the metric $g_1$ (see \cite{kiko2022}), a  relationship between harmonic maps and mean curvature zero surfaces, similar to the Riemannian case exists. This allows one to use loop group methods to construct all solutions, and, in particular, generate many more examples.

Here we will study spacelike mean curvature zero surfaces in $\he$ equipped with the metric $g_2$, denoting this space by $\Nili$, via the relationship with harmonic maps into $\SSS^2$. 

\subsection{Surfaces with harmonic Gauss maps} \label{harmonicintro}
Harmonic maps from a Riemann surface into $\SSS^2$ are directly related to various different geometric problems, via the Gauss map. 
In these cases, the passage from the harmonic Gauss map to the surface is well defined even at points where the resulting surface is not regular (a so-called \emph{frontal}), so it is natural in this context to consider generalized surfaces with singularities.

Given a harmonic map $\nu: M \to \SSS^2$, where $M$ is a simply connected Riemann surface, there are naturally associated:
\begin{enumerate} 
 \item A geometrically unique constant positive Gaussian curvature $K=1$, surface $f_{cgc} : \to \real^3$ (see, e.g. \cite{spherical}), satisfying a compatible pair of linear PDE:
 \[
  f_z = i \nu \times \nu_z,
 \]
that has $\nu$ as its Gauss map. Here $z=x+iy$ and $f_z:= (1/2)(\partial f/\partial x-i\partial f/\partial y)$.
 \item A pair of geometrically unique constant mean curvature $H=1/2$ surfaces $f^{\pm}: M \to \real^3$ that are parallel surfaces to $f_{cgc}$, given by:
 \[
  f^\pm = f_{cgc} \pm \nu.
  \]
 \item A 2-dimensional family of zero mean curvature spacelike surfaces $f$ in $\Nili$, to be described below.
\end{enumerate}
Conversely, such surfaces have  harmonic Gauss maps. 

The way that singularities appear in each case is different however: evidently
$f_{cgc}$ is regular if and only if the harmonic map $N$ is also regular, by
the relation $f_z = i N \times N_z$. 

For the second case, the CMC surface $f^\pm$ has derivatives:
\beq \label{eq:cmc}
 f^\pm_x = \nu \times \nu_y \,  \pm \nu_x, 
 \quad \quad
 f^\pm_y = -\nu \times \nu_x \,  \pm \nu_y.
\eeq
It follows that $f^\pm$ only has singularities of rank zero (branch points), and these occur  precisely when
\[
 \nu_x = \mp \nu \times \nu_y.
\]
If $\nu$ itself has rank zero at a point (i.e. $\nu_x=\nu_y=0$), then both $f^+$ and $f^-$ 
have a branch point. If $\nu_x = \mp \nu \times \nu_y \neq 0$, then
$f^\pm$ has a branch point, and, one can show, $f^\mp$ has an umbilic point.

For the third case, the fact that the metric on $\Nili$ is not isotropic  means that a given harmonic map $\nu$ into $\SSS^2$ can be interpreted in many geometrically distinct ways as the Gauss map  for a surface in $\Nili$. The Lie algebra $\nili$ is identified with the vector space $\real^3$, with the center of $\nili$ spanned by $e_3$.
If we fix the given embedding of $\SSS^2$ in $\real^3$, with the north pole pointing in the $e_3$ direction, then we obtain a well defined 
mean curvature zero surface $f: M \to \Nili$ (described below).
If we apply an isometry to $\SSS^2$, then this has no geometric 
significance to $\nu$.  However, a rotation about any axis other than the  $e_3$-axis is not an isometry of $\Nili$, and so there is a 2-dimensional family of
surfaces $f: M \to \Nili$ associated to the harmonic map $\nu$.
As for the singularities, $f$ will have singularities exactly at points 
where $\nu$ is perpendicular to $e_3$, i.e. when $\nu$ takes values 
in the equator in the $e_1 e_2$-plane. Hence the singular set is different for each surface in the $2$-parameter family (see Figure \ref{figex0}, and Examples \ref{example0} and \ref{example1}).

  \begin{figure}[htb]
\centering
$
\begin{array}{ccc}
\includegraphics[height=30mm]{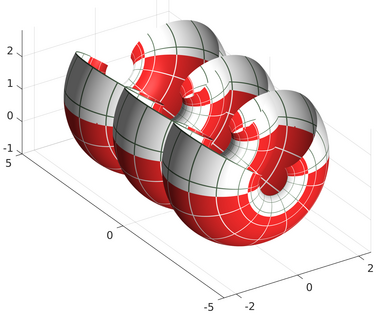}  
 & \quad
\includegraphics[height=30mm]{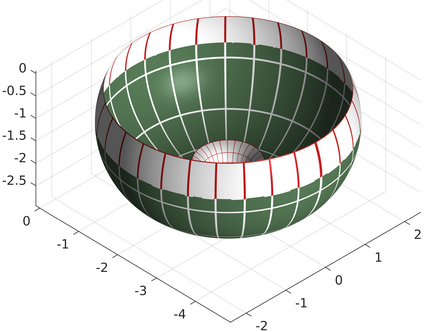} \quad & 
\includegraphics[height=30mm]{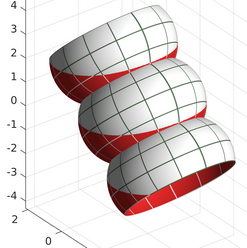} 
\vspace{2ex} \\
\includegraphics[height=35mm]{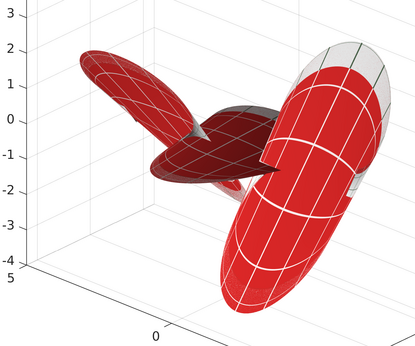}  \quad & 
\includegraphics[height=35mm]{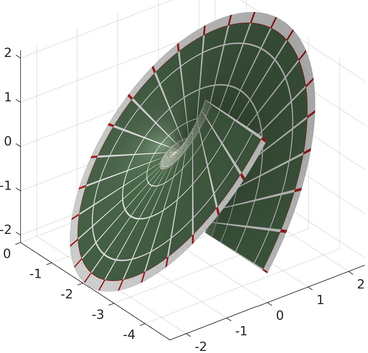}   \quad & 
\includegraphics[height=35mm]{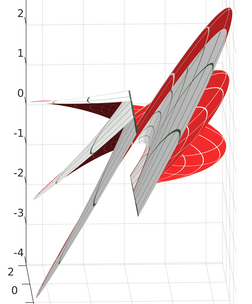} 
\end{array}
$
\caption{Top: Portions of a single constant mean curvature surface of revolution, with three different spatial orientations. Bottom: the corresponding maximal surfaces in $\Nili$, in the same order.
(Example \ref{example0}.)}
\label{figex0}
\end{figure}

\subsection{Surfaces with singularities}
It is well known that there are no complete maximal surface in the Lorentzian 3-space $\LL^3$ besides the plane. Nevertheless, such surfaces are naturally of interest to geometers,
and this motivated the definition of a \emph{maxface} \cite{uy2006}
as a generalized surface with singularities.  For maxfaces, the Gauss map is a holomorphic map into the extended hyperbolic plane, and the surface has singularities where the Gauss map takes values in the boundary between the two components. The generic singularities  are cuspidal edge, swallowtail and cuspidal cross-cap \cite{fsuy}.

In contrast to this, for maximal surfaces in $\Nili$,  the Gauss map is a nowhere holomorphic harmonic map into $\SSS^2$. However, we will show that the generic singularities are of the same type as for maxfaces.

\subsection{Outline of this article}
In Section \ref{metricsect} we briefly describe the Heisenberg group and the left-invariant Lorentzian metric we will use.
In Section \ref{maxsection} we describe the spinor representation for maximal surfaces in $\Nili$, the relation with harmonic maps into $\SSS^2$, and how to produce all solutions via loop group methods.

In Section \ref{singsection} we prove that the generic singularities 
are cuspidal edge, swallowtail and cuspidal cross-cap: Theorem \ref{thm:criteria} and Theorem \ref{thm:criteria2} characterize these singularities respectively in terms of the conformal Gauss map and the
Abresch-Rosenberg differential.  This gives (in Theorem \ref{thm:generic})
a characterization of
the generic singularities in the space of local solutions.

In Section \ref{sec:cauchy} we show how to solve the Cauchy
problem for a harmonic map into $\SSS^2$ (Theorem \ref{cauchythm}) via loop group methods, which is of independent interest.
This allows one to construct harmonic maps with prescribed transverse derivative along any curve in the sphere: in particular, the equator, and thus with a prescribed singular set along a curve (Theorem \ref{singcauchythm}). Theorem \ref{singcauchythm2} shows explicitly the relation between the geometry of the Cauchy data and the type of generic singularity at a point.

In Section \ref{sect:equatorial} we look at the geometry of
the curve on the associated CMC surface that corresponds
to the equator in $\SSS^2$ for the harmonic Gauss map.
This curve gives a very simple characterization of the generic singularities of the maximal surface in $\Nili$.
As an application, we show in Theorem \ref{thm:twocrosscap}
that a regular spacelike maximal disc with null boundary
in $\Nili$ must have at least two cuspidal cross-caps on the boundary.

\textbf{Numerics:} The DPW method for harmonic maps can be implemented numerically to compute solutions from a given potential. 
At the time of writing, implementations in Matlab that allow one to compute the examples shown here, as well as any other solution with given Cauchy data, can be found at \href{http://davidbrander.org/software.html}{http://davidbrander.org/software.html}

\section{Left invariant metrics on the Heisenberg group} \label{metricsect}
The 3-dimensional Heisenberg group $\he$ is the group of $3\times 3$ real upper-triangular matrices with one's on the diagonal.
The Lie algebra $\lhe$ is spanned by:
\[
 a_1 = \bbar 0 & 1 & 0\\0&0&0\\0&0&0\ebar,
 \quad
 a_2 = \bbar 0 & 0 & 0\\0 & 0 & 1\\0&0&0\ebar,
 \quad
 a_3 = \bbar 0 & 0 & 1\\0&0&0\\0&0&0\ebar,
\]
with commutators, $[a_1,a_2]=a_3$ and
$[a_1,a_3]=[a_2,a_3]=0$.
For this group, exponential coordinates give a bijection
$\exp: \lhe \to \he$, with the formula:
\[
 \exp(x_1 a_1 + x_2 a_2 +x_3a_3) = \bbar 1& x_1& x_3+\frac{1}{2}x_1x_2\\
         0 & 1 & x_2\\ 0 & 0 & 1 \ebar.
\]
Thus, identifying $\he$ with $\lhe = \real^3$, it is common to
represent $\he$ as $\real^3$ with the group structure that corresponds to the matrix product in $\he$, namely:
\[
 (x_1,x_2,x_3)\cdot(\tilde x_1,\tilde x_2, \tilde x_3) = 
 (x_1+\tilde x_1, \,\, x_2+\tilde x_2, \,\, x_3+\tilde x_3 + \tfrac{1}{2}(x_1\tilde x_2 - \tilde x_1 x_2)).
\]
Differentiating the map $L_X: \real^3 \to \real^3$, given by $L_X(\tilde X)=X\cdot \tilde X$, at $\tilde X= e = (0,0,0)$, we have:
\[
 \dd L_X |_e = \bbar 1 & 0 & 0 \\ 0 & 1 & 0 \\ -\tfrac{1}{2}x_2 & \tfrac{1}{2} x_1 & 1 \ebar,
\]
so the left invariant vector fields generated by $a_1$, $a_2$ and $a_3$ are:
\beq \label{livfs}
 A_1 = \frac{\partial}{\partial x_1} - \frac{x_2}{2}\frac{\partial }{\partial x_3} ,\quad \quad
 A_2 = \frac{\partial}{\partial x_2} + \frac{x_1}{2}\frac{\partial }{\partial x_3} ,\quad \quad
 A_3 = \frac{\partial }{\partial x_3}.
\eeq

\subsection{Left invariant Riemannian metrics}
Milnor \cite{milnor1976} introduced a systematic study of left invariant Riemannian metrics on Lie groups. On a connected Lie group of dimension $3$, the Lie bracket is necessarily given by $[u,v]=L(u \times v)$, where $L$ is linear and the cross-product is defined with respect to a chosen orientation. The group is unimodular if and only if the linear map $L$ is self-adjoint. Using this, Milnor
showed that, for the case of a 3-dimensional unimodular Lie group $G$, a basis $\{e_1, e_2, e_3\}$ can be chosen for the Lie algebra that is orthonormal with respect to the given metric and for which the Lie bracket is given by
\[
[e_1, e_2]=\lambda_3 e_3, \quad
[e_2,e_3]=\lambda_1 e_1, \quad
[e_3, e_1]=\lambda_2 e_2.
\]
For the case of the Heisenberg group $\he$, with notation as above, since all commutators are proportional to $a_3$, it follows that one of the $e_j$, which we may as well take to be $e_3$, is proportional to $a_3$. Then  $\lambda_1=\lambda_2=0$, $\lambda_3 \neq 0$, and no generality
is lost by assuming that
\[
e_1=a_1, \quad e_2 = a_2, \quad e_3 = \frac{1}{\lambda_3} a_3.
\]
Since the orientation can be chosen so that $\lambda_3>0$,  there is, up to a positive homothety, only one left invariant metric on $\he$.

\subsection{Left invariant Lorentzian metrics}
Rahmani \cite{rahmani1992} studied the \emph{Lorentzian} metric version of this problem for 3-dimensional unimodular Lie groups, using a similar approach to Milnor's. Since a Lorentzian metric is not isotropic, there are more possibilities. One can again choose 
an orthonormal basis for the Lie algebra, with
\[
\langle e_1, e_1 \rangle = \langle e_2, e_2 \rangle =1,
\quad \langle e_3, e_3 \rangle =-1,
\]
and the Lie bracket is given by 
$[u, v]=L (u \times v)$, where $L$ is linear and the Lorentzian 
cross-product is defined by $e_1 \times e_2 = - e_3$,  $e_2 \times e_3 = e_1$, $e_3 \times e_1 = e_2$. Here, also, the group is unimodular if and only if $L$ is self-adjoint. The different possible Lie algebra structures in terms of the orthonormal basis are given in \cite{rahmani1992}; among these, there are three that give the Heisenberg group structure:
\begin{enumerate}
 \item  $[e_1,e_2]=[e_1,e_3]=0$,  \quad $[e_2,e_3]=\mu e_1$.
 \item $[e_2,e_3]=[e_1,e_3]=0$,  \quad $[e_1,e_2]= \mu e_3$.
 \item $[e_2,e_3]=0$, \quad $[e_3, e_1]=[e_2,e_1]=e_2-e_3$.
\end{enumerate}
These correspond  to the center $\hbox{Span} \{a_3\}$ of $\lhe$  being spacelike, timelike, or null: for the three cases we respectively have
$a_3= \mu e_1$, $a_3= \mu e_3$ and $a_3=e_3-e_2$.
The corresponding left invariant Lorentzian metrics are denoted respectively by $g_1$, $g_2$ and $g_3$. 
The algebra of Killing fields is given for each case in \cite{rahmani1992}, showing that the isometry groups are of dimension $4$ for $g_1$ and $g_2$, and of dimension $6$ for the case of $g_3$. The geometry of the metrics $g_1$, $g_2$ and $g_3$ is studied further by N. Rahmani and S. Rahmani in \cite{rahmani2006}.  They prove that the metrics are nonisometric, and that $g_3$ is flat.

\section{Maximal spacelike surfaces in \texorpdfstring{$\Nili$}{Nil3}}	 \label{maxsection}
In this article, we use the metric $g_2$, in particular with $\mu=1$,  taking 
\[
e_1=a_1, \quad  e_2=a_2, \quad  e_3=a_3,
\]
as the orthonormal basis for the Lorentzian metric. In the coordinates $(x_1,x_2,x_3)$ for $\real^3=\lhe\equiv\he$,
we have from \eqref{livfs}:
\[
 \frac{\partial }{\partial x_1} = e_1 + \frac{x_2}{2}e_3, \quad \quad
 \frac{\partial }{\partial x_2} = e_2 - \frac{x_1}{2}e_3,
 \quad \quad \frac{\partial }{\partial x_3}=e_3,
\]
in terms of the left invariant vector fields $e_i= a_i$. 
Using the Lorentzian orthonormality, with $\langle e_3,e_3\rangle = -1$, this gives the metric in the coordinate frame:
\[
g_2  = \dd x_1^2+\dd x_2^2 -\left(\dd x_3 + \frac{1}{2}(x_2 \dd x_1 - x_1 \dd x_2)\right)^2.
\]
   The Levi-Civita connection $\nabla$ of $g_2$ is given by 
\[
 \begin{pmatrix}
 \nabla_{e_1} e_1 & \nabla_{e_1} e_2  & \nabla_{e_1} e_3 \\
 \nabla_{e_2} e_1 & \nabla_{e_2} e_2  & \nabla_{e_2} e_3 \\
 \nabla_{e_3} e_1 & \nabla_{e_3} e_2  & \nabla_{e_3} e_3 
 \end{pmatrix}
 =
 \begin{pmatrix}
 0 & \tfrac{1}{2} e_3& \tfrac{1}{2} e_2\\
-\tfrac{1}{2} e_3  & 0 & - \tfrac{1}{2} e_1\\
\tfrac{1}{2} e_2    & -\tfrac{1}{2} e_1& 0
 \end{pmatrix}.
\]
 Then it is easy to see the symmetric bi-linear map given by $\{e_i, e_j\} = \nabla_{e_i} e_j + \nabla_{e_j} e_i$ satisfies
 \[
     \{e_1, e_2\}=0, \quad \{e_1, e_3\}=  e_2 \quad \mbox{and}\quad \{e_2, e_3\}=- e_1.
 \]
 
 \subsection{Spinor representation for spacelike conformal immersions in \texorpdfstring{$\Nili$}{Nil3}}
 We first give a representation of an arbitrary conformal immersion into $\Nili$ in terms of a pair of \emph{generating spinors}, $\psi_1$, $\psi_2$.  After that, we characterize the mean curvature zero property in terms of the spinors. An $SU(2)$-valued frame can be used to construct both the spinors and the harmonic Gauss map.

Consider a conformal spacelike immersion $f$ from a Riemann surface $M$ into 
 $\Nili$. Choose a conformal coordinate $z=x+i y \in \D \subset M$ in a simply 
 connected domain $\D$ in $M$ and define complex valued functions $\phi_1, \phi_2$ and $\phi_3$ by 
 \[
     f^{-1} f_z = \phi_1 e_1 + \phi_2 e_2 + \phi_3 e_3,
 \]
 where the subscript denotes the derivative with respect to $z$, that is, $\partial_z = \tfrac12
 (\partial_x - i \partial_y)$.
 Setting $\varPhi = f^{-1}f_z$ and $\overline \varPhi = \overline{f^{-1}f_z} = f^{-1}f_{\bar z}$ and $\beta = f^{-1} df = \varPhi dz + \overline \varPhi d \bar z$, the Maurer-Cartan equation $d \beta + \frac{1}{2}[\beta \wedge \beta]=0$ 
  and  the second fundamental form of $f$ can be rephrased as, 
  see \cite[Section 1.1]{dik2016}
\begin{equation}\label{eq:fundamental}
 \varPhi_{\bar z} - \overline{\varPhi}_z + [\overline{\varPhi},\varPhi]=0 
 \quad\mbox{and}\quad
 \varPhi_{\bar z} + \overline{\varPhi}_z + \{\overline{\varPhi},\varPhi\}=e^{u} f^{-1} \boldsymbol{H}, 
\end{equation}
 where $e^u dz d \bar z$ and $\boldsymbol{H}$ denote
 the induced conformal metric and the mean curvature vector, respectively.
 Note that $\partial_{\bar z} = \tfrac12( \partial_x + i \partial_y)$, and 
 $u : \D \to \R$ is a real-valued function on $\D$.
 
 We now rephrase the fundamental equations in \eqref{eq:fundamental} 
 in terms of generating spinors. Since $f$ is a conformal immersion, it is easy to see that
\begin{equation}\label{eq:conditionphi}
\phi_1^2+ \phi_2^2 - \phi_3^2 =0\quad\mbox{and}\quad |\phi_1|^2 +|\phi_2|^2-|\phi_3|^2  = \frac{1}{2} e^u.
\end{equation}
 The \textit{generating spinors} $\psi_1$ and $\psi_2$ are defined 
  as a solution of $\phi_1^2+ \phi_2^2 - \phi_3^2 =0$ with
 \[
 \phi_1 = (\overline{\psi_2})^2 - \psi_1^2, \quad 
 \phi_2 = i ((\overline{\psi_2})^2 + \psi_1^2), \quad
 \phi_3 = 2 i \psi_1 \overline{\psi_2},
 \]
 where $\overline{\psi_2}$ denotes the complex conjugate of $\psi_2$. Note that 
 $\psi_1 \sqrt{d z}$ and  $\overline{\psi_2} \sqrt{d z}$ are well defined on $M$.

 The conformal factor $e^{u}$ of the  induced metric $\langle df,df\rangle$ 
 can be expressed  by the spinors  $\psi_1, \psi_2$ via the second formula at \eqref{eq:conditionphi}:
 \[
 e^{u} =4 (|\psi_1|^2-|\psi_2|^2)^2.
 \]
 In the following, we assume the regularity condition $|\psi_1|\neq|\psi_2|$.
  A straightforward computation shows that 
 \begin{equation}\label{eq:cross}
     f^{-1} f_z  \times f^{-1} f_{\bar z} 
     = i e^{u/2}
   \left( 2 \Im (\psi_1 \psi_2 )e_1- 2 \Re (\psi_1 \psi_2)e_2- (|\psi_1|^2 + |\psi_2|^2)e_3\right).
 \end{equation}
  Let us denote the unit normal vector field of $f$ by $N$.  
  From \eqref{eq:cross},
  $N$ is given by 
  \begin{equation}\label{eq:unitnormal}
  N=e^{-u/2}L, 
  \quad \quad
  f^{-1} L = 
  2 \left( 2 \Im (\psi_1 \psi_2 )e_1- 2 \Re (\psi_1 \psi_2)e_2-
   (|\psi_1|^2 + |\psi_2|^2)e_3\right).
  \end{equation}
The \textit{support function} $h$ of $f$, with respect to $z$, (which is evidently non-vanishing for a spacelike surface) is defined as: 
 \begin{equation}\label{eq:support}
 h :=  \langle f^{-1} L, e_3 \rangle = 2(|\psi_1|^2+|\psi_2|^2).
 \end{equation}

  It is straightforward to check the following theorem, see \cite[Theorem 3.5]{dik2016}
   for the case of a surface in $\Nil$:
 \begin{theorem}\label{thm:Dirac}
 The pair of generating spinors $\{\psi_1, \psi_2\}$ satisfies the 
  following \textit{nonlinear Dirac equation}, 
 \begin{equation}\label{Dirac1}
\slashed{D} \begin{pmatrix} 
\psi_1

\\ 
\psi_2
\end{pmatrix} 
:=
\begin{pmatrix}
\partial_{z}\psi_{2}+\mathcal{U}\psi_1
\\
-\partial_{\bar z}\psi_1+\mathcal{V}\psi_2
\end{pmatrix} 
=
\left(
\begin{array}{c}
0
\\
0
\end{array}
\right),
\end{equation}
 where 
 \[
 \mathcal U = \mathcal V = \frac{H}{2}e^{u/2}i - \frac{1}{4}h.  
 \]
 Here $H$, $e^{u}$ and $h$ are  
 the mean curvature, the conformal factor 
 and the support function for $f$
 respectively. 
 
 Moreover, together with the second fundamental form, 
  the vector $\tilde \psi = (\psi_1, \psi_2)$ satisfies the following system
\begin{equation}\label{eq:Weingarten}
\tilde \psi_z = \tilde \psi \tilde U, \quad
\tilde \psi_{\bar z}= \tilde \psi \tilde V,
\end{equation}
where 
\begin{equation}\label{eq:tildeUV}
    \tilde U = \begin{pmatrix}
    \frac12 w_z - \frac{i}2 H_z e^{-w/2+u/2}&- e^{w/2} \\ 
     B e^{-w/2}& 0
    \end{pmatrix}, \quad 
    \tilde V = \begin{pmatrix}
        0 & - \bar B e^{-w/2} \\ e^{w/2}&   \frac12 w_{\bar z} - \frac{i}2 H_{\bar z} e^{-w/2+u/2}
    \end{pmatrix}.
\end{equation}
 Here the function $e^{w/2}$ is the {\rm Dirac potential} defined by
\[
 e^{w/2} = \mathcal U = \mathcal V =\frac{H}{2}e^{u/2}i - \frac{1}{4}h,
\]
 and the quadratic differential $B \, dz^2$ will be called 
the {\rm Abresch-Rosenberg differential} given by 
 \[
  B = \frac{2 i H + 1}{2}\left( \psi_1 \overline{\psi_2}_z - \overline{\psi_2} {\psi_1}_z \right) + 2i H (\psi_1\overline{\psi_2} )^2.
 \]
 \end{theorem}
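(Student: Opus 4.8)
The plan is a direct verification following the scheme of the Riemannian case \cite[Theorem 3.5]{dik2016}, with care taken for the sign changes forced by the Lorentzian metric (the $-1$ in $\langle e_3,e_3\rangle$, which propagates through \eqref{eq:cross}, \eqref{eq:unitnormal}, \eqref{eq:support}, and through the connection table recorded above).

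\textbf{The nonlinear Dirac equation.} First I would expand the fundamental equations \eqref{eq:fundamental} into scalar equations for the components $\phi_1,\phi_2,\phi_3$ of $\varPhi$, using that in $\nili$ the only nonzero bracket is $[e_1,e_2]=e_3$ (so $[\overline\varPhi,\varPhi]$ is a pure $e_3$-term) and the only nonzero symmetric products are $\{e_1,e_3\}=e_2$, $\{e_2,e_3\}=-e_1$ (so $\{\overline\varPhi,\varPhi\}$ lies in the $e_1e_2$-plane). The Maurer--Cartan part then forces $(\phi_1)_{\bar z}$ and $(\phi_2)_{\bar z}$ to be real, and the second fundamental form part, fed by $f^{-1}\boldsymbol H=He^{-u/2}f^{-1}L$ from \eqref{eq:unitnormal} and $e^{u/2}=2(|\psi_1|^2-|\psi_2|^2)$ from \eqref{eq:conditionphi}, expresses them through the $\phi_k$ and $H$. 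Passing to $\phi_1+i\phi_2=-2\psi_1^2$, $\phi_1-i\phi_2=2\overline{\psi_2}^2$, $\phi_3=2i\psi_1\overline{\psi_2}$ and differentiating, the right-hand sides collapse --- after rewriting $\phi_2-i\phi_1=2i\psi_1^2$ and its conjugate, and using $\phi_3=2i\psi_1\overline{\psi_2}$ --- to $\psi_1\psi_2(h-2iHe^{u/2})$, giving
\[
-4\psi_1\,\partial_{\bar z}\psi_1=\psi_1\psi_2(h-2iHe^{u/2}),\qquad 4\psi_2\,\partial_z\psi_2=\psi_1\psi_2(h-2iHe^{u/2}).
\]
Since $|\psi_1|\ne|\psi_2|$ the two spinors never vanish simultaneously, so cancelling and invoking continuity gives $\partial_{\bar z}\psi_1=\mathcal V\psi_2$ and $\partial_z\psi_2=-\mathcal U\psi_1$ with $\mathcal U=\mathcal V=\tfrac{H}{2}e^{u/2}i-\tfrac14 h$, which is \eqref{Dirac1}. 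The remaining scalar equations, those involving $(\phi_3)_{\bar z}$, then hold automatically and serve as a consistency check.

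\textbf{The linear system \eqref{eq:Weingarten}.} With $e^{w/2}=\mathcal U=\mathcal V$, two of the four component equations of \eqref{eq:Weingarten} --- the one for $\partial_z\psi_2$ (the second entry of $\tilde\psi\tilde U=\tilde\psi_z$) and the one for $\partial_{\bar z}\psi_1$ (the first entry of $\tilde\psi\tilde V=\tilde\psi_{\bar z}$) --- are exactly the two Dirac equations just obtained, so only the equations for $\partial_z\psi_1$ and $\partial_{\bar z}\psi_2$ remain. For these I would differentiate $\varPhi=f^{-1}f_z$ once more: conformality gives $\langle\varPhi_z,\varPhi\rangle=\tfrac12\partial_z\langle\varPhi,\varPhi\rangle=0$, so the Gauss equation of $f$ in $\Nili$ yields a decomposition $\varPhi_z=\alpha\,\varPhi+\gamma\,(f^{-1}N)$, the $\overline\varPhi$-term being absent, with $\alpha$ governed by $u_z$ and the Levi-Civita connection of $g_2$ and $\gamma$ the Hopf-type coefficient carrying the second fundamental form. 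Substituting $f^{-1}N$ from \eqref{eq:unitnormal} into $(\phi_1)_z+i(\phi_2)_z=-4\psi_1\,\partial_z\psi_1$ gives $\partial_z\psi_1=\tfrac{\alpha}{2}\psi_1+i\gamma e^{-u/2}\psi_2$; a short computation with the Dirac equations and $e^{w/2}=\tfrac{H}{2}e^{u/2}i-\tfrac14 h$ then rewrites $\tfrac{\alpha}{2}$ as $\tfrac12 w_z-\tfrac i2 H_z e^{u/2-w/2}$ and $i\gamma e^{-u/2}$ as $Be^{-w/2}$ with $B$ exactly the stated Abresch--Rosenberg quantity. The equation for $\partial_{\bar z}\psi_2$ comes out the same way from the conjugate relation $\overline{\phi_1-i\phi_2}=2\psi_2^2$. (Alternatively, once the adapted $SU(2)$-frame $F$ with first column proportional to $(\psi_1,\overline{\psi_2})$ is in hand, \eqref{eq:Weingarten} can be read directly off $F^{-1}F_z$ and $F^{-1}F_{\bar z}$, with $B$ --- up to a scalar --- the upper-right entry.)

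\textbf{Main obstacle.} The content is modest; the work is bookkeeping, and the two delicate points are these. First, every Lorentzian sign must be right: a single slip in \eqref{eq:cross}, \eqref{eq:unitnormal}, or in the products $\{e_i,e_j\}$ corrupts $\mathcal U,\mathcal V$ or $B$. Second, one must check that the second fundamental form of $f$ in $\Nili$, expressed through the spinors, reorganizes \emph{exactly} into the displayed $B$ and the precise diagonal entries of $\tilde U$ and $\tilde V$ --- that is, that the Hopf differential of $f$ equals $\tfrac{2iH+1}{2}(\psi_1\overline{\psi_2}_z-\overline{\psi_2}{\psi_1}_z)+2iH(\psi_1\overline{\psi_2})^2$ up to the factor $e^{w/2-u/2}$. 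I would cross-check the final formulas against the Riemannian ones in \cite{dik2016} and against the conformality relations \eqref{eq:conditionphi}.
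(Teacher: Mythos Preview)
Your proposal is correct and follows exactly the approach the paper indicates: the paper gives no proof at all beyond the sentence ``It is straightforward to check the following theorem, see \cite[Theorem 3.5]{dik2016} for the case of a surface in $\Nil$,'' and your sketch is precisely that straightforward verification, adapted from the Riemannian case with the Lorentzian sign changes tracked. Your outline is in fact considerably more detailed than anything the paper supplies.
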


 The unit normal $f^{-1} N$ can be considered as a map into the 
 the union of two hyperbolic two-spaces
 $\mathbb H^2_+ \cup \mathbb H^2_- \subset \mathbb E^3_1 (= \mathfrak{nil}^3_1)$,
 where $\mathbb(b H^2_{+}$ (resp. $\mathbb H^2_-$) denotes the  hyperbolic two-space with positive (resp. negative)
 $e_3$-component. 
 Note that when  $|\psi_1|>|\psi_2|$ (resp. $|\psi_2|>|\psi_1|$), 
  $f^{-1}N$ takes values in $\mathbb H^2_-$ (resp. $\mathbb H^2_+$).
 We now consider the \textit{normal Gauss map} $g$ of the surface $f$
 as a map defined as the composition of the stereographic 
 projection $\pi$ from the point $(0, 0,-1)$ with
 $f^{-1} N$ in \eqref{eq:unitnormal},  that is, $g = \pi \circ f^{-1} N: \D \to \C \cup \{\infty\} \setminus \mathbb S^1$, 
 (here we identify $(x, y, 0)$ with $ - y + i x$) and
 thus, we obtain
 \begin{equation}\label{eq:Normal}
 g= \frac{\psi_1}{\overline{\psi_2}} 
 \end{equation}
 and $f^{-1}N$ can be represented by the normal Gauss map $g$ as
 \begin{equation*}
     f^{-1} N = \frac1{|g|^2-1}\left(
      2 \Im (g) e_1 - 2 \Re (g) e_2 - (|g|^2+1) e_3\right).
 \end{equation*}
We now introduce a family of Maurer-Cartan forms $\{\alpha^{\lambda}\}_{\lambda \in \mathbb S^1}$:
 \begin{equation}\label{eq:alpha}
     \alpha^{\lambda} = U^{\lambda} d z + V^{\lambda} d \bar z,
 \end{equation}
  where
  \begin{equation*}
      U^{\lambda} = 
      \begin{pmatrix}
    \frac14 w_z - \frac{i}2 H_z e^{-w/2+u/2}&-\lambda^{-1} e^{w/2} \\ 
     \lambda^{-1}B e^{-w/2}& -\frac14 w_z
    \end{pmatrix}, \quad 
     V^{\lambda} = 
\begin{pmatrix}
    -\frac14 w_{\bar z} & - \lambda \bar B e^{-w/2} \\ \lambda e^{w/2}&   \frac14 w_{\bar z} - \frac{i}2 H_{\bar z} e^{-w/2+u/2}
\end{pmatrix}.
  \end{equation*}
  Note that $U^{\lambda}|_
 {\lambda =1}$ and $V^{\lambda}|_{\lambda =1}$ are obtained by 
 a gauge transformation applied to $\tilde U$ and $\tilde V$ in \eqref{eq:tildeUV}, that is $\Ad (G^{-1}) (\tilde U) +  G^{-1} G_z$ and
 $\Ad (G^{-1}) (\tilde V) +  G^{-1} G_{\bar z}$ with the 
  matrix $G = \di (e^{-w/4}, e^{w/4})$. 
  Then we characterize a spacelike maximal surface in $\Nili$ in terms of the family of connections 
   $d + \alpha^{\lambda}$ and the normal Gauss map $g$ as follows.
\begin{theorem}\label{thm:mincharact}
 Let $f : \D \to \Nili$ be a conformal spacelike immersion and $\alpha^{\lambda}$
 the $1$-form defined in \eqref{eq:alpha} and $g$ the normal Gauss map in \eqref{eq:Normal}.
 Then the following statements are mutually equivalent{\rm:}
 \begin{enumerate}
 \item $f$ is a maximal surface.
 \item $d + \alpha^{\lambda}$ is a family of flat connections on $\D \times  \SU$.
 \item The normal Gauss map $g$ for $f$ is a nowhere holomorphic harmonic 
       map into the $2$-sphere.
 \end{enumerate}
 \end{theorem}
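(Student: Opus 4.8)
The plan is to prove $(1)\Leftrightarrow(2)$ by a direct computation with the zero-curvature equation, and then the implications $(2)\Rightarrow(3)$ and $(3)\Rightarrow(1)$; together these give all three equivalences.

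For $(1)\Leftrightarrow(2)$ I would first observe that, writing $\alpha^\lambda=U^\lambda\,\dd z+V^\lambda\,\dd\bar z$, the form $\alpha^\lambda$ takes values in $\su$ for every $\lambda\in\SSS^1$ — exactly what is needed for $\dd+\alpha^\lambda$ to be a connection on $\D\times\SU$ — if and only if $H\equiv0$: the required anti-Hermitian symmetry $V^\lambda=-(U^\lambda)^*$ forces, on comparing off-diagonal entries, $e^{\overline{w}/2}=e^{w/2}$, i.e.\ $e^{w/2}\in\real$, which by $e^{w/2}=\tfrac H2 e^{u/2}i-\tfrac14 h$ (with $e^{u/2}>0$) happens precisely when $H=0$; and when $H=0$ one has $e^{w/2}=-\tfrac14 h<0$, so $w$ has constant imaginary part, $\overline{w_z}=w_{\bar z}$, and all of $V^\lambda+(U^\lambda)^*$ vanishes. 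In particular $(2)$ forces $H=0$. Assuming now $H=0$, I would expand the Maurer--Cartan equation $\dd\alpha^\lambda+\tfrac12[\alpha^\lambda\wedge\alpha^\lambda]=0$ as a Laurent polynomial in $\lambda$: the coefficients of $\lambda^{\pm1}$ reduce to $B_{\bar z}=0$ (equivalently $\overline{B}_z=0$), i.e.\ to holomorphicity of the Abresch--Rosenberg differential $B\,\dd z^2$, which when $H=0$ follows from Theorem \ref{thm:Dirac} upon substituting $\partial_{\bar z}\psi_1=\mathcal V\psi_2$, $\partial_z\psi_2=-\mathcal U\psi_1$ (with $\mathcal U=\mathcal V=-\tfrac14 h$) from \eqref{Dirac1} into $B=\tfrac12(\psi_1\,\partial_z\overline{\psi_2}-\overline{\psi_2}\,\partial_z\psi_1)$ and differentiating. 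Since $\dd+\alpha^1$ is flat for any conformal immersion (it is the gauge transform by $G=\di(e^{-w/4},e^{w/4})$ of $\dd+\tilde U\,\dd z+\tilde V\,\dd\bar z$, whose flatness is equivalent to the structure equations \eqref{eq:fundamental}), and since the $\lambda=1$ Maurer--Cartan equation is the sum of the three Laurent coefficients, the vanishing of those for $\lambda^{\pm1}$ forces the vanishing of the one for $\lambda^0$; hence $\dd+\alpha^\lambda$ is flat for all $\lambda$, giving $(1)\Rightarrow(2)$.

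For $(2)\Rightarrow(3)$ I would appeal to the loop-group description of harmonic maps into a symmetric space. Writing $\su=\mathfrak{k}\oplus\mathfrak{p}$ with $\mathfrak{k}=\mathfrak{u}(1)$ the diagonal subalgebra and $\mathfrak{p}$ its off-diagonal complement, the form \eqref{eq:alpha} with $H=0$ reads $\alpha^\lambda=\lambda^{-1}\alpha'_{\mathfrak p}+\alpha_{\mathfrak k}+\lambda\,\alpha''_{\mathfrak p}$, where $\alpha'_{\mathfrak p}$ (resp.\ $\alpha''_{\mathfrak p}$) is the $(1,0)$- (resp.\ $(0,1)$-) part of the $\mathfrak p$-component of $\alpha^1$ and $\alpha_{\mathfrak k}$ its $\mathfrak k$-component. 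This is precisely the form of the Maurer--Cartan form of an extended frame of a harmonic map into $\SSS^2=\SU/U(1)$, so by the standard characterization of such maps (Uhlenbeck) the flatness of $\dd+\alpha^\lambda$ on $\D\times\SU$ is equivalent to $g:=\pi\circ F|_{\lambda=1}$ being harmonic, where $F$ solves $F^{-1}\dd F=\alpha^\lambda$ and $\pi:\SU\to\SSS^2$ is the natural projection. It then remains to identify this $g$ with the normal Gauss map \eqref{eq:Normal}: from the gauge relation above, $F|_{\lambda=1}=\Psi\,G$ up to a left constant, where $\Psi$ is a fundamental solution of \eqref{eq:Weingarten} whose relevant row is, up to normalization, $(\psi_1,\overline{\psi_2})$, and $\pi$ followed by stereographic projection from $(0,0,-1)$ returns $\psi_1/\overline{\psi_2}=g$. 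The "nowhere holomorphic" clause of $(3)$ is automatic: by \eqref{Dirac1}, $g_{\bar z}=(\overline{\psi_2})^{-2}\bigl(\mathcal V|\psi_2|^2+\overline{\mathcal U}|\psi_1|^2\bigr)$, and with $\mathcal U=\mathcal V=-\tfrac14 h$ and $h=2(|\psi_1|^2+|\psi_2|^2)$ the numerator equals $-\tfrac12(|\psi_1|^2+|\psi_2|^2)^2<0$, so $g_{\bar z}$ is nowhere zero.

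For $(3)\Rightarrow(1)$ the most direct route is a tension-field computation: supposing $g=\psi_1/\overline{\psi_2}$ harmonic into the round $\SSS^2$, I would compute $g_z$, $g_{\bar z}$, $g_{z\bar z}$ from \eqref{Dirac1} and \eqref{eq:Weingarten} and simplify, using $h=2(|\psi_1|^2+|\psi_2|^2)$, $e^u=4(|\psi_1|^2-|\psi_2|^2)^2$ and the expression for $B$; the tension-field quantity $g_{z\bar z}-\tfrac{2\bar g}{1+|g|^2}g_z g_{\bar z}$ should reduce to a nowhere-vanishing function times $H$, so $g$ harmonic forces $H\equiv0$, i.e.\ $f$ is maximal. (Alternatively one runs the loop-group correspondence of the previous step in reverse; the pointwise computation is shorter and also reproves $(1)\Rightarrow(3)$.) I expect the main obstacle to lie in $(2)\Rightarrow(3)$: the precise identification of $\pi\circ F|_{\lambda=1}$ with $\psi_1/\overline{\psi_2}$ — tracking the diagonal gauge $G$ and the relation between a fundamental solution of the Weingarten system \eqref{eq:Weingarten} and the generating spinors — so that the abstract "flat family $\Leftrightarrow$ harmonic map" statement yields harmonicity of $g$ as defined in \eqref{eq:Normal} and not of an auxiliary map; the other laborious (but routine) step is the tension-field bookkeeping in $(3)\Rightarrow(1)$.
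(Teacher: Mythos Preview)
The paper does not actually prove this theorem: it states that ``the proof is almost verbatim to the case of a surface in $\Nil$'' and refers to \cite{dik2016}. Your proposal is a correct and more explicit sketch of the standard argument, presumably the one in that reference. The observation that $\su$-valuedness of $\alpha^\lambda$ alone already forces $e^{w/2}\in\real$ and hence $H=0$ is the cleanest way to get $(2)\Rightarrow(1)$; the remaining pieces (holomorphicity of $B$ from the Dirac system when $H=0$, the Uhlenbeck characterisation of harmonic maps via the $\lambda$-family, and the tension-field computation for $(3)\Rightarrow(1)$) are routine. Your flagged obstacle---identifying $\pi\circ F|_{\lambda=1}$ with $\psi_1/\overline{\psi_2}$ through the gauge $G$---resolves by computing $\Ad_F E_3$ directly from the explicit frame \eqref{eq:extframin}, which reproduces \eqref{eq:unitnormal} and hence $g$ after stereographic projection.
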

 The proof is almost verbatim to the case of a surface in $\Nil$, see for example \cite{dik2016},
 thus we omit.

 From Theorem \ref{thm:mincharact}, there exists a family of maximal surfaces $\{f^{\lambda}\}_{\lambda \in \mathbb S^1}$ parameterized 
  by $ \lambda \in \mathbb S^1$ with a pair of generating spinors $\{\psi_1 (\lambda ), \psi_2 (\lambda )\}$ such that $\{\psi_1 (\lambda), \psi_2 (\lambda)\}|_{\lambda =1}$ are the generating spinors 
  of $f = f^{\lambda}|_{\lambda = 1}$.
 Moreover, we can define a map $F$ from $\D $ into $\SU$ 
 associated with respect to the generating spinors 
 $\psi_1$ and $\psi_2$ for a maximal surface:
 \begin{equation}\label{eq:extframin}
 F(\lambda) =\frac{1}{\sqrt{|\psi_1(\lambda)|^2+|\psi_2(\lambda)|^2}} 
\begin{pmatrix}
  \psi_1(\lambda) & \psi_2(\lambda) \\  - \overline{\psi_2(\lambda)} & \overline{\psi_1(\lambda)}
\end{pmatrix}. 
 \end{equation}
 Then $F$ will be called the \textit{extended frame} of the 
 spacelike maximal surface and the harmonic normal Gauss map $g = \psi_1/\overline{\psi_2}$.
 \begin{remark}
  Without loss of generality $F(\lambda)$ take values in $\LSU$.
 \end{remark}

 Using a logarithmic derivative of the extended frame $F$ with respect to 
 $\lambda$, we have a formula for 
 a conformal maximal surface in $\Nili$.
\begin{theorem}\label{thm:Sym}
 Let $F$ be the extended frame
 for a spacelike maximal surface. Define maps 
 $f_{cmc}$ and $N$ respectively by 
 \begin{equation}\label{eq:SymEuc}
  f_{cmc}=-i \lambda (\partial_{\lambda} F) F^{-1}  -N
  \quad  \mbox{and} \quad 
 N=  \Ad (F) E_3, \quad E_3 := \frac{1}{2}\bbar i & 0\\0 &-i\ebar.
 \end{equation}
 Moreover, define a map  $f^{\lambda}:\mathbb{D}\to \Nili$ by
 $f^{\lambda}:=\Xi_{\mathrm{nil}}\circ \hat{f^{\lambda}}$ with
\begin{equation}\label{eq:symNil}
 \hat f^{\lambda} = 
    \left.
    \left(f_{cmc}^o -\frac{i}{2} \lambda (\partial_{\lambda} f_{cmc})^d\right)
    \;\right|_{\lambda \in \mathbb{S}^1}, 
\end{equation}
 where the superscripts ``$o$'' and ``$d$'' denote the off-diagonal and 
 diagonal part, 
 respectively. Then, for each $\lambda \in \mathbb{S}^1$, 
 the map $f^{\lambda}$ is a maximal surface in $\Nili$ and 
 $ \pi \circ N$ is the normal Gauss map of $f^{\lambda}$, 
  where $\pi$ is the stereographic projection from the north pole.
 In particular, $f^{\lambda}|_{\lambda =1}$ gives 
 the original spacelike maximal surface up to a rigid motion.
\end{theorem}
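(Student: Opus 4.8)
The plan is to verify Theorem \ref{thm:Sym} by reversing the construction in the preceding sections: starting from the extended frame $F(\lambda)\in\LSU$ of a spacelike maximal surface, I would show that the Sym-type formulas \eqref{eq:SymEuc} and \eqref{eq:symNil} reproduce exactly the spinor data and the surface $f$ characterized by Theorems \ref{thm:Dirac} and \ref{thm:mincharact}. The strategy has three stages: (i) establish that $f_{cmc}$ is the CMC $H=1/2$ surface in $\real^3$ associated with the harmonic map $g=\psi_1/\overline{\psi_2}$; (ii) show that the diagonal/off-diagonal decomposition in \eqref{eq:symNil} produces a $\nili$-valued (equivalently $\he$-valued, after $\Xi_{\mathrm{nil}}$) map whose derivative matches $\varPhi=f^{-1}f_z$ from the spinor representation; and (iii) conclude via the fundamental equations \eqref{eq:fundamental} that $f^\lambda$ is conformal, spacelike, and maximal with normal Gauss map $\pi\circ N$.

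First I would compute $\partial_\lambda F\cdot F^{-1}$ explicitly. Differentiating $dF=F\alpha^\lambda$ in $z$ and $\bar z$ and using flatness of $d+\alpha^\lambda$ (Theorem \ref{thm:mincharact}(2)), one obtains that $\Psi:=-i\lambda(\partial_\lambda F)F^{-1}$ satisfies $\partial_z\Psi = -i\lambda\,\Ad(F)(\partial_\lambda U^\lambda)$ and similarly for $\partial_{\bar z}$; a direct calculation of $\partial_\lambda U^\lambda$ and $\partial_\lambda V^\lambda$ from the explicit matrices in \eqref{eq:alpha}, conjugated back by $F$, gives the derivatives of $f_{cmc}=\Psi-N$ in terms of the spinors. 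This is the standard Sym--Bobenko mechanism and should reproduce \eqref{eq:cmc} with $\nu=N=\Ad(F)E_3$; identifying $\mathfrak{su}(2)$ with $\real^3$ via $E_j$, one checks $N=\Ad(F)E_3$ equals the stereographic formula for $g$, i.e.\ $\pi\circ N = g$. The CMC surface being a parallel surface of $f_{cgc}$, together with $f^\pm=f_{cgc}\pm\nu$, pins down $f_{cmc}$ geometrically.

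Next, for the $\Nili$ surface, I would take the off-diagonal and $\lambda$-derivative-of-diagonal parts in \eqref{eq:symNil} and compute $\partial_z\hat f^\lambda$. The point is that the off-diagonal part of $f_{cmc}$ contributes the $e_1,e_2$-components while the correction $-\tfrac{i}{2}\lambda(\partial_\lambda f_{cmc})^d$ supplies the $e_3$-component, in such a way that $\partial_z\hat f^\lambda$, expressed in the frame $e_1,e_2,e_3$, equals $\phi_1 e_1+\phi_2 e_2+\phi_3 e_3$ with $\phi_j$ given by the spinor formulas and the spinors $\psi_i(\lambda)$ those of $F(\lambda)$ via \eqref{eq:extframin}. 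Then the first equation of \eqref{eq:conditionphi} holds automatically, so after applying $\Xi_{\mathrm{nil}}$ (the identification $\nili\equiv\he$) one gets a genuine conformal map into $\Nili$; spacelikeness follows from $|\psi_1|\neq|\psi_2|$, and the mean-curvature-zero condition follows from Theorem \ref{thm:mincharact} since $\alpha^\lambda$ is, by construction, the family of flat connections attached to $f^\lambda$. The normalization $f^\lambda|_{\lambda=1}$ recovering the original $f$ up to rigid motion is then just the uniqueness part of the spinor representation (Theorem \ref{thm:Dirac}), since both have the same generating spinors at $\lambda=1$.

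The main obstacle I anticipate is the bookkeeping in stage (ii): correctly matching the diagonal-versus-off-diagonal splitting of $f_{cmc}\in\mathfrak{su}(2)$ (or its complexification along the $\partial_\lambda$ direction) with the non-orthonormal coordinate frame on $\he$, keeping track of the factor $\tfrac12$ in the Heisenberg product and the shear terms $\pm\tfrac{x_i}{2}\partial_{x_3}$ from \eqref{livfs}. In particular one must check that the integrability of $\partial_z\hat f^\lambda$, $\partial_{\bar z}\hat f^\lambda$ — which amounts to the Maurer--Cartan equation $\varPhi_{\bar z}-\overline\varPhi_z+[\overline\varPhi,\varPhi]=0$ of \eqref{eq:fundamental} — is equivalent to flatness of $d+\alpha^\lambda$; this is where the specific form of the Levi-Civita connection of $g_2$ and the bracket $\{e_i,e_j\}$ computed above enters, and it is the step most likely to require care rather than being ``verbatim'' to the Riemannian $\Nil$ case. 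Since the analogous computation for $\Nil$ is in \cite{dik2016}, I would structure the proof to isolate exactly the sign changes coming from $\langle e_3,e_3\rangle=-1$ and refer to that source for the unchanged algebra.
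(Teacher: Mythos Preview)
The paper does not actually supply a proof of Theorem~\ref{thm:Sym}: it is stated and immediately followed by a remark, with the implicit understanding (as with Theorems~\ref{thm:Dirac} and~\ref{thm:mincharact}) that the verification is parallel to the Riemannian $\Nil$ case treated in \cite{dik2016}. So there is no ``paper's own proof'' to compare against beyond that reference.

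Your plan is the correct one and is exactly how the $\Nil$ version is proved in \cite{dik2016}: differentiate the Sym quantity using $\partial_z(-i\lambda\partial_\lambda F\cdot F^{-1})=-i\lambda\,\Ad(F)\partial_\lambda U^\lambda$, read off the $E_1,E_2,E_3$ components, and match them to $\phi_1,\phi_2,\phi_3$ from the spinor formulas. Your stage (ii) identification of the obstacle is accurate; the only point to sharpen is that $\hat f^\lambda$ lives in $\mathfrak{su}(2)$, not in $\nili$, and $\Xi_{\mathrm{nil}}$ (undefined in this paper but defined in \cite{dik2016}) is more than the vector-space identification: it takes the $\mathfrak{su}(2)$ coordinates $(\hat f_1,\hat f_2,\hat f_3)$ to the point of $\he$ with those exponential coordinates, so what you must check is that $(f^\lambda)^{-1}(f^\lambda)_z$ (computed via the Heisenberg group law, hence producing the shear terms you mention) equals $\phi_1 e_1+\phi_2 e_2+\phi_3 e_3$, not that $\partial_z\hat f^\lambda$ does. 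Once that left-translated derivative is written out, the $e_3$-component acquires exactly the extra $\tfrac12(x_1\partial_z x_2-x_2\partial_z x_1)$ piece that the second $\lambda$-derivative in \eqref{eq:symNil} is designed to cancel; this is the one genuinely nontrivial computation and is where the sign from $\langle e_3,e_3\rangle=-1$ enters relative to \cite{dik2016}.
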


\begin{remark}
\mbox{}
\begin{enumerate}
\item Identifying the Lie algebra $\su$ with the Euclidean $3$-space $\E^3$, with orthonormal basis $E_1 = \odi(-i,-i)/2$, 
$E_2 = \odi(1,-1)/2$ and $E_3 = \di(i,-i)/2$, the map $f_{cmc}$ in \eqref{eq:SymEuc} defines a constant mean curvature surface with $H_{cmc} =1/2$
 in $\E^3$, see for example \cite{bobenko1994}.
 The formula above for $f_{cmc}$ corresponds to the choice  $f^-$ in Section \ref{harmonicintro}. The Gauss map $g$ used here is holomorphic exactly when either $N_x = N \times N_y$, or $N_z =0$, i.e., when the map $f^-$ fails to be regular. So $f_{cmc}$ is regular for a nowhere holomorphic harmonic map.
 \item It is known that for a given nowhere holomorphic harmonic map $g$ from a surface $M$
  into $\mathbb S^2$, there exists a regular constant mean curvature $H_{cmc} \neq 0$ surface in 
  the Euclidean $3$-space, \cite{Kenmotsu1979}. In particular, the induced metric of the 
  surface is given by 
  \[
   ds^2 = \left(\frac2{H_{cmc}} \frac{| \partial_{\bar z} g|}{1 + |g|^2}  \right)^2|dz|^2.
  \]
 Moreover, there exists an extended frame $F$ corresponding to the nowhere holomorphic harmonic map $g$ and the CMC surface, see Theorem \ref{thm:mincharact} and 
 thus one can define a map map $f^{\lambda}$ by \eqref{eq:symNil}.
\end{enumerate}
 \end{remark}

\subsection{Generalized maximal surfaces}   \label{generalizedsection}
 
 We first give an alternative proof of a result that can be found
 in \cite{hlee2011}:
  \begin{theorem}
   There does not exist any complete spacelike maximal surface
    in $\Nili$.
  \end{theorem}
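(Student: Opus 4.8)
The plan is to reduce the statement to a known nonexistence result about complete surfaces. The most natural target is the constant mean curvature surface $f_{cmc}$ with $H_{cmc}=1/2$ in $\E^3$ associated with the harmonic normal Gauss map $g$ via Theorem \ref{thm:Sym}; since a complete maximal surface in $\Nili$ would have a nowhere holomorphic harmonic Gauss map $g$, the companion CMC surface is regular, and one can try to invoke the fact that there is no complete CMC surface in $\E^3$ with bounded geometry of the relevant type — but that is false in general (round spheres, cylinders, Delaunay surfaces are complete). So instead I would argue directly at the level of the induced metrics, comparing the metric of the maximal surface $f$ in $\Nili$ with the metric of $f_{cmc}$.

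First I would compute the induced metric on the maximal surface $f$ in terms of the generating spinors: from \eqref{eq:conditionphi} we have $e^u = 4(|\psi_1|^2-|\psi_2|^2)^2$, while the metric of the associated CMC surface $f_{cmc}$ is, by the Kenmotsu-type formula quoted in the Remark after Theorem \ref{thm:Sym}, proportional to $\bigl(|\partial_{\bar z}g|/(1+|g|^2)\bigr)^2|dz|^2$ with $g=\psi_1/\overline{\psi_2}$. The key point is an \emph{inequality} between these two conformal factors: one should be able to show that $e^{u}_{\Nili} \le C\, e^{u}_{cmc}$ pointwise, where $C$ is an absolute constant (or a constant depending only on the harmonic map data). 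Indeed $|\psi_1|^2-|\psi_2|^2$ is controlled by $|\psi_1|^2+|\psi_2|^2 = h/2$, which is the support function, and the support function of a spacelike maximal surface in $\Nili$ is intimately related to $|\partial_{\bar z}g|$ through the harmonic map equation — this is exactly the content of the Dirac equation \eqref{Dirac1} together with the Weingarten system \eqref{eq:Weingarten}. Working this out carefully should give: if the metric of $f$ is complete, then so is the metric of $f_{cmc}$.

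Next I would invoke the classical theorem that the only complete constant mean curvature surfaces in $\E^3$ arising as above from a nowhere holomorphic harmonic Gauss map with the additional structure we have — in particular those whose Hopf/Abresch–Rosenberg differential $B\,dz^2$ and conformal type are constrained — cannot exist unless $g$ is constant, which would make $f$ degenerate (a contradiction with $f$ being an immersion). Alternatively, and perhaps more robustly, I would run an Omori–Yau / Huber-type argument: a complete spacelike maximal surface in $\Nili$ has a well-defined harmonic Gauss map into $\SSS^2$, and the harmonicity plus completeness force $g$ to be either holomorphic or anti-holomorphic (a Liouville-type rigidity for harmonic maps from a complete surface with the relevant curvature sign), contradicting Theorem \ref{thm:mincharact}(3), which says the Gauss map of a maximal surface is \emph{nowhere} holomorphic. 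This dichotomy ``harmonic $+$ complete $\Rightarrow$ $\pm$holomorphic'' is the crucial input.

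The main obstacle I anticipate is precisely establishing the comparison of completeness (or the rigidity dichotomy) rigorously: controlling the degeneracy locus $|\psi_1|=|\psi_2|$, i.e. the singular set where the surface stops being an immersion, and ensuring that completeness of the abstract Riemannian metric $e^u|dz|^2$ really does propagate to the auxiliary object whose nonexistence is classical. One has to rule out the possibility that the maximal surface is complete precisely by virtue of ``running off to'' its own singular set or to infinity in a way that keeps the CMC companion incomplete. I expect the cleanest route is the second one: show that on a complete $M$ the normal Gauss map $g$, being harmonic into $\SSS^2$ with the curvature estimates coming from the $\Nili$ geometry, must be $\pm$holomorphic, and then quote Theorem \ref{thm:mincharact}. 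The estimate needed — a lower bound on the energy density of $g$ in terms of the (complete) metric, forcing the anti-holomorphic part to vanish — is the technical heart of the argument.
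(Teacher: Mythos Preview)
Your first instinct---pass to the associated CMC $1/2$ surface $f_{cmc}$ and compare metrics---is exactly what the paper does, and the pointwise inequality you are groping for is simply
\[
e^{u} + 4|\phi_3|^2 = h^2,
\]
where $h=2(|\psi_1|^2+|\psi_2|^2)$ is the support function and $ds_e^2=h^2\,dz\,d\bar z$ is the induced metric of $f_{cmc}$. This follows from $e^u=4(|\psi_1|^2-|\psi_2|^2)^2$ and $\phi_3=2i\psi_1\overline{\psi_2}$, and immediately gives $e^u\le h^2$, so completeness of $f$ forces completeness of $f_{cmc}$.

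The genuine gap is in your second step. Once you have a complete CMC $1/2$ surface in $\E^3$, you need a reason why it cannot exist, and neither of your proposals works. Your ``classical theorem'' that a complete CMC surface with the unspecified ``additional structure we have'' must have constant $g$ is not a theorem; and the dichotomy ``harmonic from a complete surface into $\SSS^2$ $\Rightarrow$ $\pm$holomorphic'' is false (Gauss maps of Delaunay surfaces, Wente tori, etc., are nowhere holomorphic harmonic maps from complete surfaces). The missing observation is this: since $f$ is a \emph{spacelike} immersion, the induced metric never degenerates, so $|g|\neq 1$ everywhere; by connectedness, $g$ stays in one of the two discs $\{|g|<1\}$ or $\{|g|>1\}$, i.e.\ the Gauss map of $f_{cmc}$ is confined to an open hemisphere. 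Now one invokes the theorem of Hoffman--Osserman--Schoen (\cite{hos1982}, Theorem~1): there is no complete non-minimal CMC surface in $\E^3$ whose Gauss image lies in a closed hemisphere. That is the contradiction.
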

  \begin{proof}
  From the construction in Theorem \ref{thm:Sym}, for a spacelike maximal surface $f$ in $\Nili$, 
  there exists a corresponding CMC surface $f_{cmc}$ in the Euclidean $3$-space. The
  metric of the CMC surface is given by $ds_e^2 = h^2 dz d \bar z$, where $h$ is the support function 
  defined in \eqref{eq:support}.
  Then it is easy to see that 
 the induced metric $ds^2 = e^{u}d z d \bar z$ of the spacelike maximal surface and the induced metric $ds_e^2$ of the 
 corresponding CMC surface have the following relation:
\begin{equation}\label{eq:relation}
  e^{u} + 4 |\phi_3|^2 =  h^2.
 \end{equation}
 Therefore completeness of the metric $ds^2$ implies  
 completeness of  the metric $ds_e^2$. Moreover, the Gauss map of 
 the maximal surface always takes values eithre in $\mathbb H^2_+$ or $\mathbb H^2_-$.
 This implies that  the corresponding normal Gauss map (through the stereographic projection to the unit disk $\D$
  or the outside of the unit disk $\C \cup \{\infty\} \setminus \bar \D$) of the corresponding CMC surface takes values in 
 the upper open hemisphere or the lower open hemisphere. But according to \cite[Theorem 1]{hos1982}, there is no  complete non-minimal CMC surface that has Gauss image contained in a hemisphere.
 \end{proof}
 From Theorem \ref{thm:Sym}, it is natural to consider spacelike maximal surfaces in $\Nili$ with 
 singularities. Since the normal Gauss map $g = 
  \psi_1/\overline{\psi_2}$ is a  harmonic map into  $\C \cup 
  \{\infty\}$ minus $\mathbb S^1$, 
 it satisfies the elliptic PDE:
 \begin{equation}\label{eq:harmonicity}
   g_{z \bar z} - \frac{2\bar g}{1+|g|^2} g_{\bar z} g_z=0.
 \end{equation}
 By using the nonlinear Dirac equation 
 in \eqref{Dirac1}, we compute 
 $  \bar g_{z} = - \frac12 (\overline \psi_2)^2 (|g|^2+1)^2$,
 which, by setting 
 $\omega =  i \, \overline{\psi_2}^2$,  is equivalent to
 \begin{equation}\label{eq:gbarz}
 \omega =  -\frac{2i\bar g_{z}}{(|g|^2+1)^2}.
  \end{equation}
 Moreover, by the equation in \eqref{eq:Weingarten} we have
 \begin{equation}\label{eq:Bandg}
    \frac{i }{2}g_z \omega  =  \frac{g_{z} \bar g_{z}}{(|g|^2+1)^2}= B.
 \end{equation}
  The induced metric $ds^2 =4 (|\psi_1|^2- |\psi_2|^2)^2 |dz|^2$ of a maximal surface in $\Nili$ 
  is written in terms of $g$ and $\omega$ as
 \begin{equation}\label{eq:inducedmetric}
  ds^2 = (1-|g|^2)^2 |\omega|^2 |dz|^2 
  =  4|\bar g_{z}|^2\frac{(1-|g|^2)^2}{(1+|g|^2)^4}  |dz|^2.    
 \end{equation}
  Therefore, there are two possibilities to 
  generalize maximal surfaces in $\Nili$ as
 \[
  |g|^2 =1 \quad \mbox{or} \quad g_{\bar z} =0
  \]
  at $p \in M$. 
  From the harmonic map point of view, it is natural to consider a 
  harmonic map into $\C \cup \{\infty\}$. The latter case,  
  the $g_{\bar z}(p)=0$, is equivalent to that the harmonic map $g$ is holomorphic at point $p$. 
  It turns out that in the proof of Theorem \ref{thm:criteria} a holomorphic point is 
  always a degenerate singular point. Therefore we will, generally speaking, exclude such points.
  
  On the other hand, from a nowhere holomorphic 
   harmonic map from a Riemann surface 
  into $\C \cup \{\infty\}$, we can define the generating spinors $\psi_1, \psi_2$ by \eqref{eq:gbarz} and 
  $g = \psi_1/\overline{\psi_2}$. Then one can introduce 
  the extended frame $F(\lambda)$ as in \eqref{eq:extframin} 
  and the Sym-formula $f = f^{\lambda}|_{\lambda=1}$ defined by in \eqref{eq:symNil}. Then it is clear that the map $f$ defines a maximal surface where it has a regular point, that is $|g|^2 \neq 1$. It is natural to define the following class of surfaces.
 \begin{definition}[Generalized spacelike maximal surfaces]\label{def:GSMS}
  Let $g$ be a nowhere holomorphic harmonic map from a 
  Riemann surface $M$ into $\C \cup \{\infty\}$, and 
  a map $f$ into $\Nili$ as defined above.
  Then $f$ will be called a
  {\rm generalized spacelike maximal surface} in $\Nili$, and
  $g$ will be called the {\rm normalized Gauss map} of $f$.
 \end{definition}
 \begin{remark}
 \mbox{}
 \begin{enumerate}
     \item For each $\lambda \in \mathbb S^1$, $f^{\lambda}|_{\lambda \in \mathbb S^1}$ 
      is also a generalized  spacelike maximal surface.
     \item In general, a rotation of a nowhere holomorphic harmonic map $g$ as 
      a map on the $2$-sphere changes the corresponding generalized 
 maximal surface completely; a rotation around the $e_3$-axis $g$ gives 
  an isometry of the surface in $\Nili$ but other rotations do not give isometries. Thus in Definition \ref{def:GSMS} we choose $g$ with some fixed initial condition 
   and choose the extended frame $F$ which gives $\pi \circ N = g$
   with $N = \Ad_F E_3|_{\lambda=1}$.
 \end{enumerate}
 \end{remark}

\subsection{Numeric Examples} \label{numerics}
 An extended frame for any harmonic map into $\SSS^2$, and hence any maximal surface in $\Nili$, can be constructed via the method of Dorfmeister/Pedit/Wu (DPW) \cite{DorPW}, from a 
 holomorphic potential of the form:
 \[
\xi = \sum_{n=-1}^\infty A_n(z) \lambda^n \dd z.
\]
 Solving the ODE $\Phi_z = \Phi \xi$, with an initial condition $\Phi(z_0) = I$, gives a so-called \emph{complex} extended frame, and an
 extended frame $F$ for a harmonic map is then obtained from $\Phi$
 by a pointwise Iwasawa decomposition $F = \Phi B_+$, where $B_+$ extends holomorphically, in the parameter $\lambda$, to the unit disc. To produce a nowhere
 holomorphic harmonic map $g$, or equivalently, a regular CMC surface
 $f_{cmc}$ we need the regularity condition:
 \beq \label{regularity}
 (A_{-1})_{1,2} \neq 0,
 \eeq
 which corresponds to the function $e^{w/2}$ in $U^\lambda$ being non-zero.

 Potentials corresponding to various geometric properties for CMC surfaces have been found in numerous subsequent works, and the solutions can be computed numerically. 
 
 A major difference when using this method for maximal surfaces in the Heisenberg group is that the action of the group $\SU$ is not an isometry. Pre-multiplying an extended frame by an element of $\SU$ will result in an extended frame for a geometrically distinct solution. Therefore, for a given potential, many different solutions are obtained, depending on the initial condition. 

\textbf{Interpreting the images:} In Figures \ref{figex0} and \ref{figex1}, the surfaces are oriented with the $e_3$-axis directed upwards, and are colored according to whether the harmonic map takes values in the upper or lower hemisphere.
Since the singular set corresponds to the equator separating these two hemispheres,
the plot of the maximal surface in $\Nili$ changes color 
at the singular set. The plot can, of course, appear to change color
at a place where the surface has a self intersection, for example
the lower middle image in Figure \ref{figex1}.

 \begin{example} \label{example0}
  As is well-known, a CMC surface of revolution in $\real^3$ is obtained 
  from a potential of the form:
  \[
   \xi(z) = \bbar 0 & -a\lambda^{-1}+(a-1)\lambda \\ (1-a)\lambda^{-1} + a\lambda & 0 \ebar \dd z.
  \]
  The case $a=1.2$ is computed and plotted in Figure \ref{figex0}.
  The CMC surface is a nodoid. On the second row we show examples of the diferent maximal surfaces in $\Nili$ obtained by choosing a different axis of symmetry for the nodoid. The harmonic Gauss map of the nodoid is doubly periodic. In the figure, for each case, the singular set for the lower surface coresponds to where the normal to the CMC surface above is perpendicular to the $E_3$-direction. At such points, if the Gauss map is symmetric about the equator, then a fold singularity is created on the maximal surface (first two examples).
  \end{example}

\begin{example} \label{example1}
 CMC surfaces with rotationally symmetric metric can be constructed via the potential:
 \[
  \xi(z) =  \begin{pmatrix}
 0  & 1 \\
 z^k & 0
 \end{pmatrix} \lambda^{-1} \dd z, 
 \]
where $k$ is a positive integer. For the case $k=1$, in Figure \ref{figex1} we display a local solution with two different initial conditions for the $SU(2)$-frame. 

 \begin{figure}[htb]
\centering
$
\begin{array}{ccc}
\includegraphics[height=28mm]{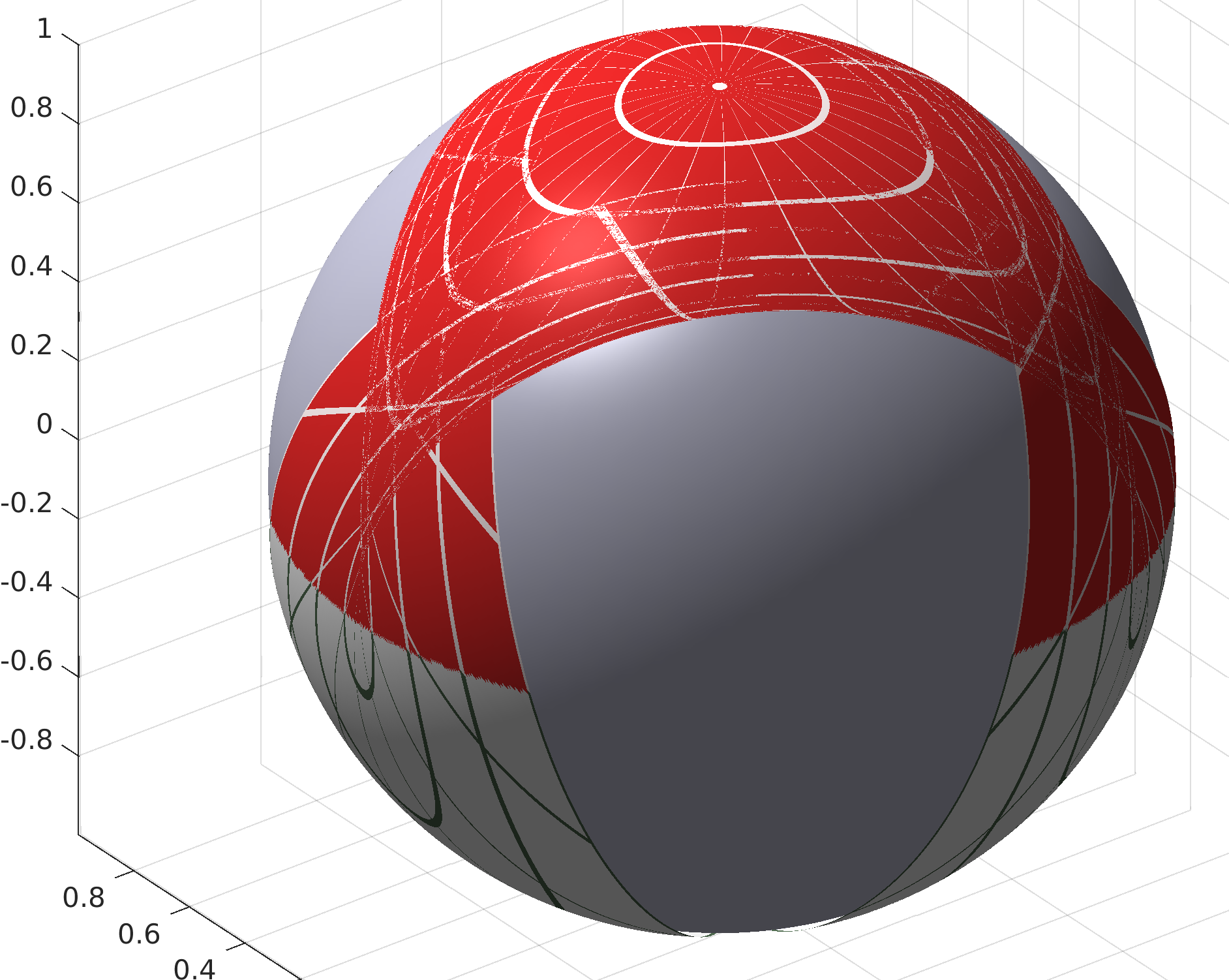}  \,\,
 & \,\,
\includegraphics[height=28mm]{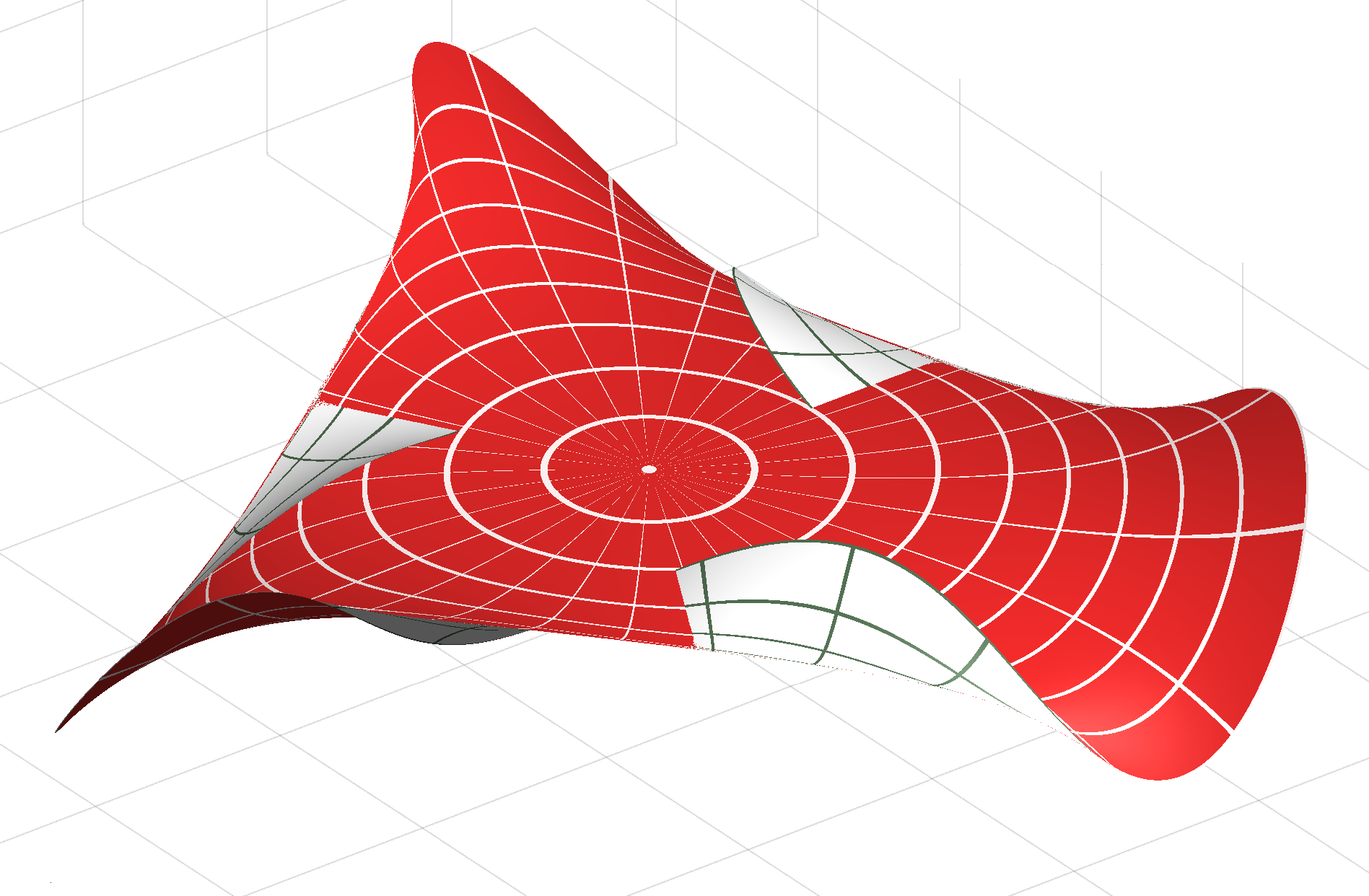} \,\, & \, \,
\includegraphics[height=30mm]{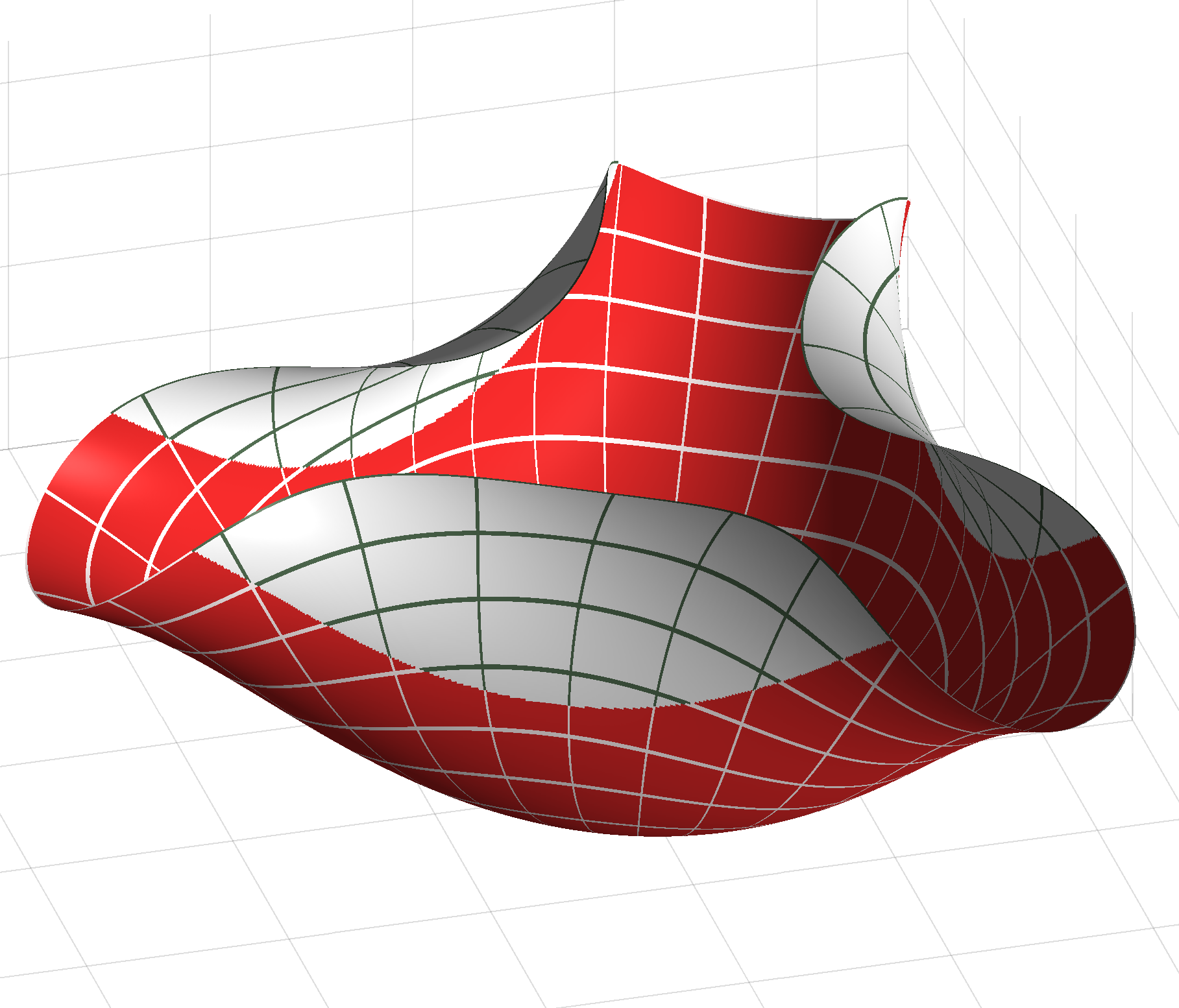}
\vspace{2ex} \\
\includegraphics[height=30mm]{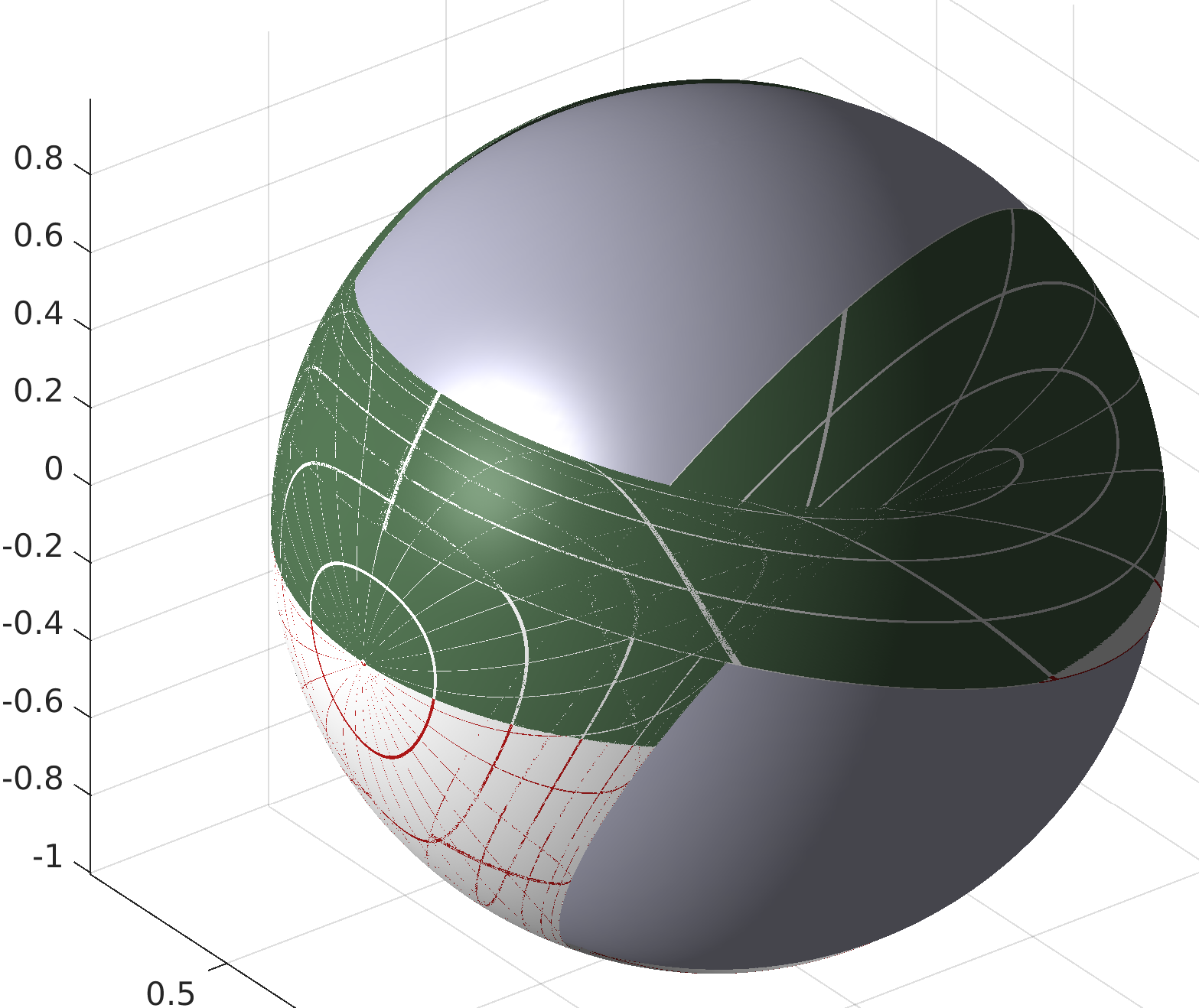} \,\,  & \,\,
\includegraphics[height=35mm]{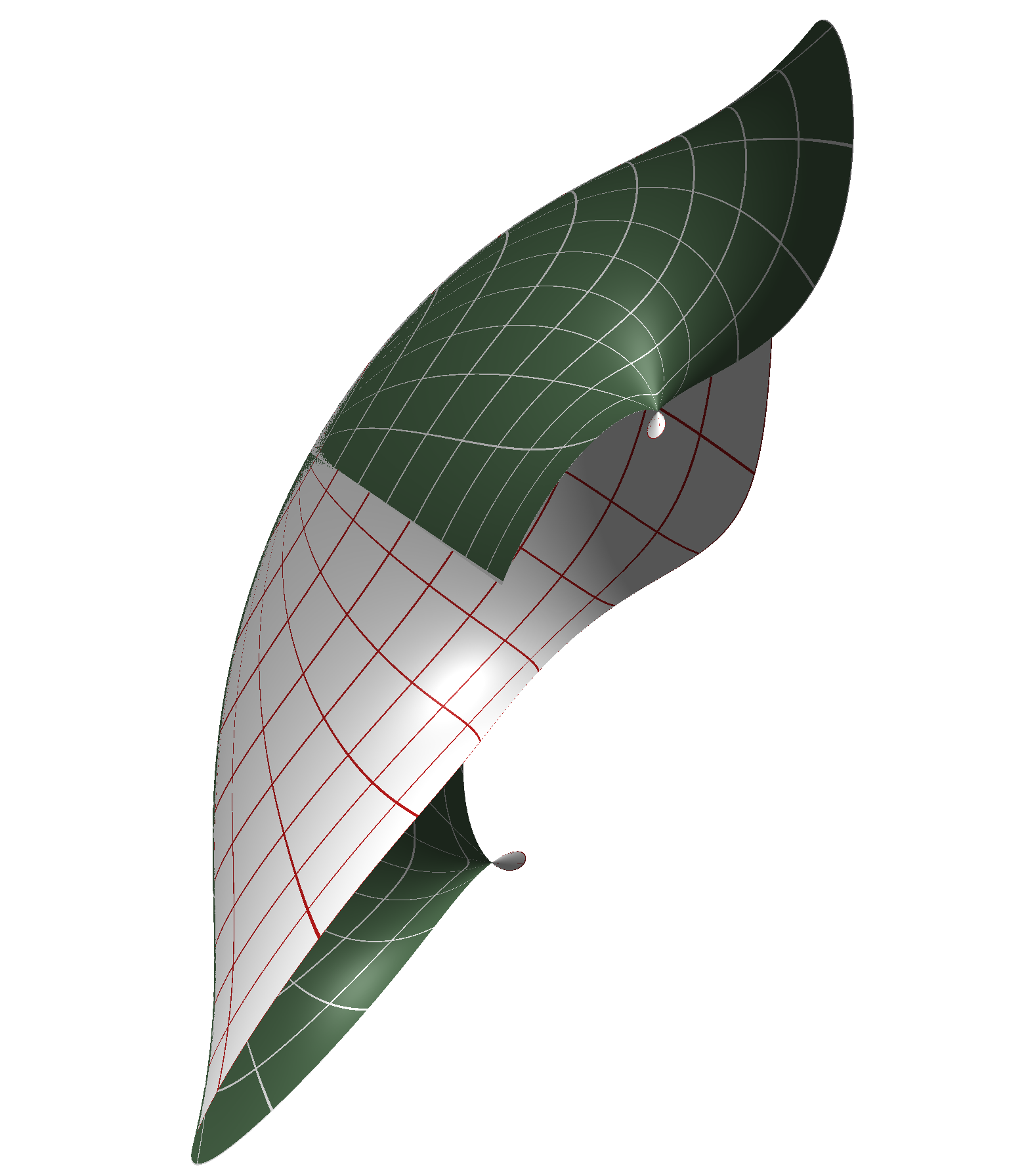}   \,\, & \,\,
\includegraphics[height=30mm]{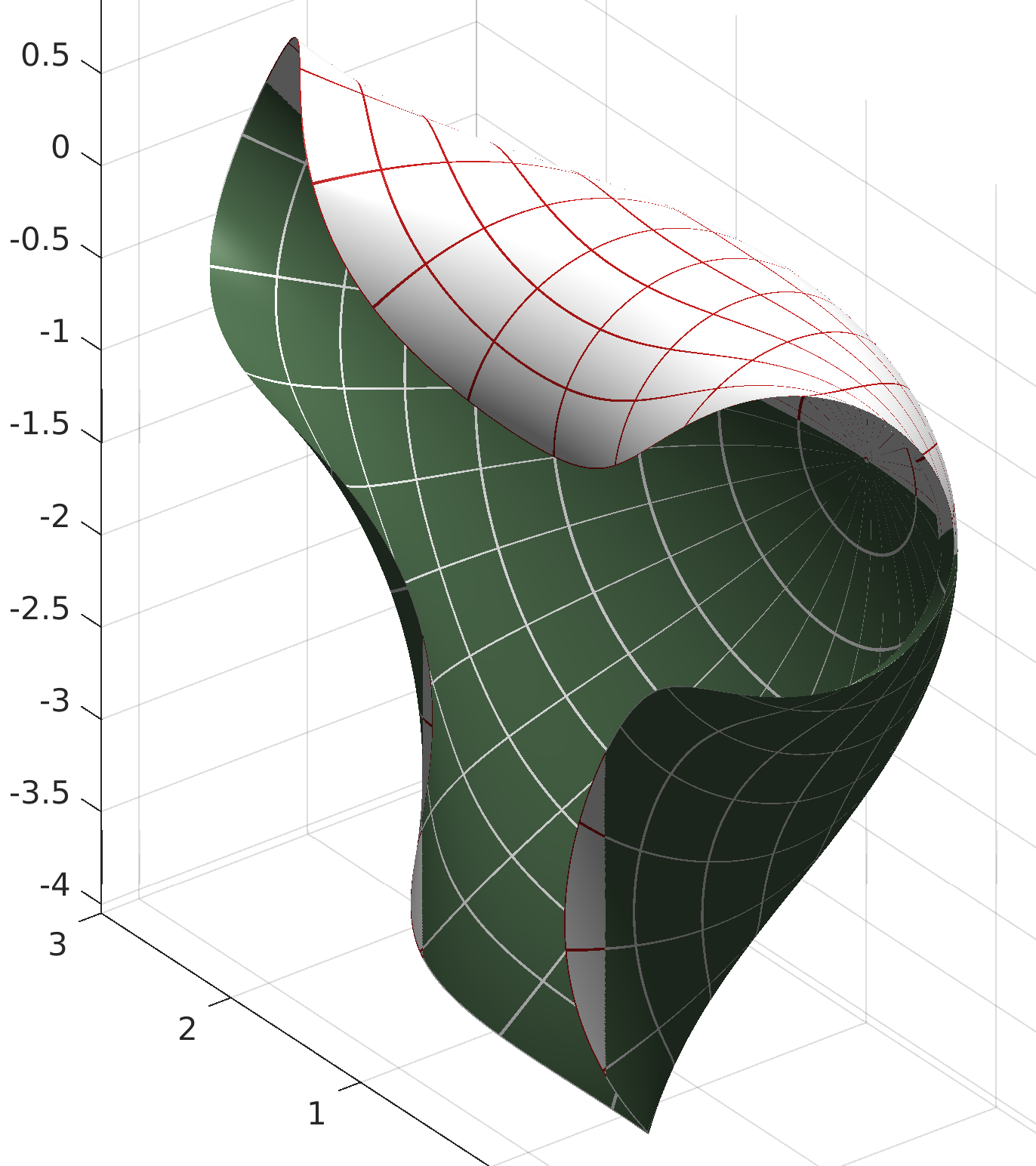} 

\end{array}
$
\caption{The solutions of Example \ref{example1}. Left: harmonic maps; middle: corresponding maximal surface in $\Nili$; right: corresponding CMC surfaces in Euclidean space;   }
\label{figex1}
\end{figure} 

Since $\SU$  acts by isometries on $\SSS^2$ and $\real^3$, the harmonic map and the CMC surface are
unchanged (up to isometry) regardless of the initial condition, but the maximal surface in $\Nili$ is different.  When the $e_3$ axis is chosen as the axis of symmetry for the harmonic map (top row in Figure \ref{figex1}), the maximal surface has an order 3 rotational symmetry about the $e_3$-axis, and has three cuspidal cross-cap singularities within the domain.    When one of the other axes is chosen as the axis of symmetry for $N$ (bottom row), the corresponding maximal surface is no longer symmetric and has only one cuspidal cross-cap singularity in the domain, at the center.
\end{example}

\begin{example} \label{example2}
Potentials of the form
\[
\xi(z) = \begin{pmatrix} 0 & a(z) \\ b(z) & 0 \end{pmatrix} \lambda^{-1} \dd z
\]
where $a(z)$ and $b(z)$ are meromorphic are called \emph{normalized} potentials.  If integrated with initial condition $\Phi(z_0)=I$, then
$z_0$ is called the \emph{normalization point}.
 In \cite{mincmc}, it is
shown (Lemma 7.3) that a CMC  surface has a dihedral rotational symmetry of order $n$ about the normalization point $z=0$
if and only if the meromorphic functions $a$ and $b$ have Laurent expansions of the form:
\[
a(z) = \sum_j a_{nj} z^{nj}, \quad \quad b(z) = \sum_j b_{nj-2} z^{nj-2}.
\]
(Thus the CMC surface in Example \ref{example1} also has a dihedral symmetry of order $k+2$). 
 \begin{figure}[htb]
\centering
$
\begin{array}{cccc}
\includegraphics[height=30mm]{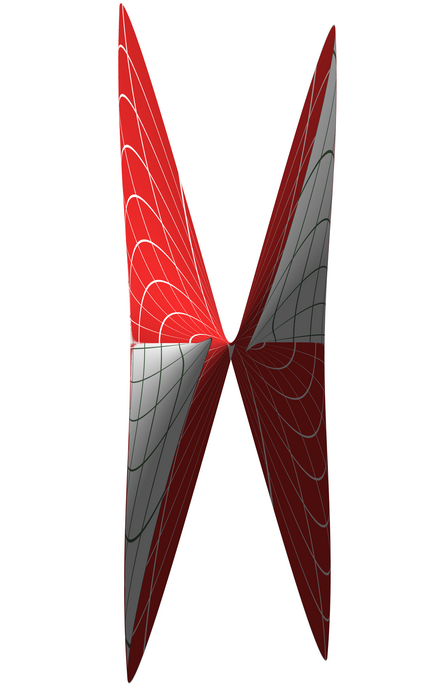}  
 & \,
\includegraphics[height=28mm]{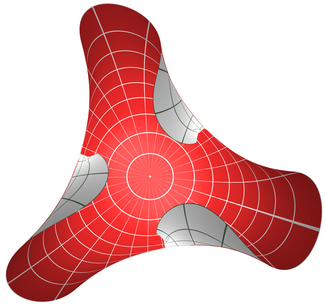} & \, 
\includegraphics[height=28mm]{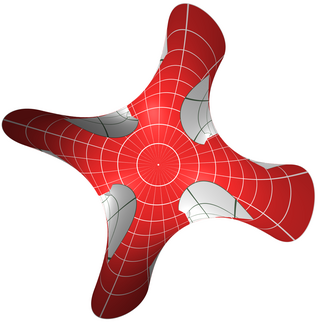} & \, 
\includegraphics[height=28mm]{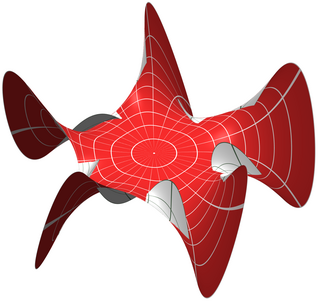} 
\vspace{1ex} \\
(1+z^2,1)  & (1+z^3,z) & (1+z^4,z^2) & (1+z^5,z^3)
\end{array}
$
\caption{
 Maximal surfaces with rotational symmetry 
 (Example \ref{example2}), computed from normalized potentials with the displayed functions $(a(z),b(z))$.}
\label{figex2}
\end{figure} 
Examples are computed and shown in Figure \ref{figex2}, where the initial condition is chosen so that the $e_3$-axis is perpendicular to the tangent plane at the point of symmetry. Thus the maximal surface also has the dihedral symmetry.  For these cases, the order $n$ symmetric examples apparently have $n$ cuspidal cross-cap singularities in the domain computed (a disc around $z=0$).  The corresponding constant positive Gauss curvature surfaces can be found in \cite{spherical}, Figure 4. 
\end{example}

 \section{Generic Singularities for Maximal Surfaces in
 \texorpdfstring{$\Nili$}{Nil3}} \label{singsection}

 The generic singularity criteria for spacelike  maximal faces
 in Minkowski space $\mathbb L^3$
 has been given in \cite[Theorem 3.1]{uy2006}, see also the definition of "maximal faces" there 
 and  in \cite[Theorem 2.4]{fsuy}. Below we will first
 give criteria for three standard singularities for
 maximal surfaces in $\Nili$, and then show that these
 are the generic ones.

 \subsection{Characterization of Standard Singularities}

\begin{theorem}[Singularity criteria]\label{thm:criteria}
 Let $g$ be the normal Gauss map of a generalized spacelike 
 maximal surface $f$ in $\Nili$. Then the singular points of $f$ are given by
$|g|^2 =1$ and $f$ is degenerate at a singular point $p$ if and only if  
\[
\Re \left[\frac{g^{\prime}}{g^2 \omega }\right] = \Im \left[\frac{g^{\prime}}{g^2 \omega}\right] +2 =0,
\]
where $\prime=\partial_z$ and the function $\omega$ is defined by \eqref{eq:gbarz}, i.e., $\omega = -(2i\bar g_{z})/(|g|^2+1)^2$.  Moreover, $f$ is a wave front at a singular point $p$ if and only if
 $\Re (g^{\prime}/(g^2 \omega)) \neq 0$,  and 
  the following criteria hold$:$
  \begin{enumerate}
     \item $f$ is $\mathcal A$-equivalent to a cuspidal edge at $p$  if and only if 
     \[
     \Re \left[ \frac{g^{\prime}}{g^2 \omega}\right] \neq 0
     \quad \mbox{and}\quad \Im \left[ \frac{g^{\prime}}{g^2 \omega}\right] \neq -2.
     \]
      \item $f$ is $\mathcal A$-equivalent to a swallowtail at $p$  if and only
      if     
      \[
      \frac{g^{\prime}}{g^2\omega} + 2 i  \in \mathbb R^{\times}\quad \mbox{and}\quad 
      \Im \left[\frac1{(\log |g|)^{\prime}}\left(\Im \left(\frac{g^{\prime}}{g^2 \omega}\right)\right)^{\prime} \right] \neq 0.
      \]
    \item $f$ is $\mathcal A$-equivalent to a cuspidal cross-cap at $p$  if and only if
      \[
     \frac{g^{\prime}}{g^2\omega} + 2 i  \in i \mathbb R^{\times}
       \quad \mbox{and}\quad 
            \Re \left[\frac1{(\log |g|)^{\prime}}\left(\Im \left(\frac{g^{\prime}}{g^2 \omega}\right)\right)^{\prime} \right]
            \neq 0.  \]
\end{enumerate}
\end{theorem}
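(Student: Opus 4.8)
The plan is to reduce the three singularity criteria to the standard Kokubu--Rossman--Saji--Umehara--Yamada (KRSUY) and Saji--Umehara--Yamada (SUY) criteria for fronts, by exhibiting $f$ near a singular point as a wave front with an explicit unit normal and an explicit null vector field along the singular set. Concretely, I would first write the maximal surface $f$ via the Sym-type formula \eqref{eq:symNil} in terms of the extended frame $F$, and compute $df$ in the conformal coordinate $z$. From \eqref{eq:inducedmetric} the induced metric degenerates exactly where $|g|^2=1$, so the singular set $\Sigma$ is the preimage of the unit circle under $g$; since $g$ is nowhere holomorphic, $\bar g_z\neq 0$, and $g$ restricted to a neighbourhood is a submersion onto $\C\cup\{\infty\}$, so $\Sigma$ is a regular curve (away from the excluded holomorphic points). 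The signed area density $\mu$ for a maximal surface in $\Nili$ is, up to a nonvanishing factor, $\lambda_f = 1-|g|^2$, and $d\lambda_f \neq 0$ on $\Sigma$; this is what makes $p$ a \emph{nondegenerate} singular point in the sense of the front theory.

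Next I would identify the null direction. On $\Sigma$ the kernel of $df_p$ is one-dimensional; writing it as a vector field $\eta$ along $\Sigma$ and expressing everything through $g$ and $\omega$, the vanishing of the metric \eqref{eq:inducedmetric} together with the explicit form of $\beta=f^{-1}df$ coming from \eqref{eq:fundamental}--\eqref{eq:Weingarten} lets me solve for $\eta$ in closed form. The quantity $g'/(g^2\omega)$ is precisely (a normalization of) the ratio that measures the direction of $\eta$ relative to the tangent direction $\partial/\partial t$ of $\Sigma$: the condition $\eta$ transverse to $\Sigma$, i.e. $d\lambda_f(\eta)\neq 0$ equivalently $\eta\notin T\Sigma$, translates into $\Re[g'/(g^2\omega)]\neq 0$, which is exactly the stated criterion for $f$ to be a front (``wave front'') at $p$. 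For the degenerate case both real and imaginary parts must satisfy the displayed equations; I would check that $\Im[g'/(g^2\omega)]=-2$ is exactly the condition $\eta\in T\Sigma$ (the singular direction coincides with the null direction), hence a degenerate singularity.

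Then, assuming the front condition, I would apply the standard local normal forms. For this I need the unit normal $\nu$ of the front $f$ in $\Nili$ (equivalently, lifting to the $SU(2)$-frame, $N=\Ad(F)E_3$ via \eqref{eq:SymEuc}), which extends smoothly across $\Sigma$, so $(f,\nu)$ is a legendrian immersion. The KRSUY criterion for a cuspidal edge is: $f$ is a front, $p$ nondegenerate, and the null direction $\eta$ is transverse to $\Sigma$ at $p$ (i.e. $\eta\pitchfork T_p\Sigma$). The swallowtail criterion adds that $\eta$ is tangent to $\Sigma$ at $p$ but with $\frac{d}{dt}\big(\text{the }\eta\text{-vs-}T\Sigma\text{ angle}\big)\neq 0$, which after translation is the non-degeneracy of the derivative of $\Im[g'/(g^2\omega)]$ along $\Sigma$ — matching the second displayed condition with the $1/(\log|g|)'$ factor (that factor is $dt/dz$ along the singular curve). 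The cuspidal cross-cap criterion (SUY) is the ``non-front'' analogue: $f$ fails to be a front, $\eta$ transverse to $\Sigma$, plus a non-vanishing of a certain derivative — here $g'/(g^2\omega)+2i\in i\mathbb R^\times$ says $\Re[g'/(g^2\omega)]=0$ (so not a front) but $\Im\neq -2$ (so not degenerate), and the $\Re[\cdots]\neq 0$ condition is the cuspidal-cross-cap non-degeneracy.

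The main obstacle will be the bookkeeping in step two: computing the null vector field $\eta$ and the relevant first and second derivatives in the right invariant coordinates on $\Nili$ (not just in $\E^3$), since the Sym formula \eqref{eq:symNil} mixes the off-diagonal part of $f_{cmc}$ with $\lambda\partial_\lambda$ of its diagonal part, and one must track how the non-isometric metric $g_2$ enters. In particular, verifying that the KRSUY/SUY non-degeneracy quantities, when pulled back through \eqref{eq:symNil}, reduce \emph{exactly} to $\Re$, $\Im$ of $g'/(g^2\omega)$ and to $\Re$, $\Im$ of $(1/(\log|g|)')(\Im(g'/(g^2\omega)))'$ — with no extra nonvanishing prefactors that could alter the vanishing loci — requires care; I would handle it by first doing the computation at $\lambda=1$ for $f_{cmc}$ in $\E^3$ (where the analogous statements for CMC $1/2$ surfaces or their parallel CGC surfaces are essentially known) and then transferring to $\Nili$ via \eqref{eq:symNil}, checking that the diagonal $\lambda\partial_\lambda$ correction is subordinate near $\Sigma$. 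I expect the identity $\bar g_z = -\tfrac12(\overline{\psi_2})^2(|g|^2+1)^2$ from \eqref{eq:gbarz} and the relation \eqref{eq:Bandg} between $B$, $g$ and $\omega$ to be the key simplifications that make all prefactors cancel.
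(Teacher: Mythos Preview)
Your overall strategy---reducing to the KRSUY/FSUY criteria via an explicit null direction $\eta$ and a signed area density---is exactly what the paper does. However, there is a genuine conceptual error in how you match the geometric conditions to the real and imaginary parts of $g'/(g^2\omega)$.

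You write that the wave front condition is ``$\eta$ transverse to $\Sigma$, i.e.\ $d\lambda_f(\eta)\neq 0$'', and that this yields $\Re[g'/(g^2\omega)]\neq 0$. That is not the wave front condition. A frontal $f$ with unit (Euclidean) normal $N_e$ is a wave front at $p$ precisely when the Legendrian lift $(f,N_e)$ is immersive, i.e.\ when $dN_e(\eta)\neq 0$ for the null direction $\eta$ of $df$; equivalently, if $\mu$ is the null direction of $dN_e$, one needs $\det(\mu,\eta)\neq 0$. In the paper this indeed gives $\Re[g'/(g^2\omega)]\neq 0$, but it has nothing to do with transversality of $\eta$ to the singular curve. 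Transversality of $\eta$ to $\Sigma$ is a \emph{separate} condition, $\det(\dot\gamma,\eta)\neq 0$ with $\dot\gamma$ tangent to $\{|g|=1\}$; the paper computes $\dot\gamma \propto i\bigl(\overline{g_z/g}+g_{\bar z}/g\bigr)$ and $\eta=i/(g\omega)$, and the determinant is $\Im[g'/(g^2\omega)]+2$. So transversality corresponds to $\Im\neq -2$, not $\Re\neq 0$. With this correction the dictionary reads: cuspidal edge $=$ front $+$ transverse $=$ ($\Re\neq 0$ and $\Im\neq -2$); swallowtail $=$ front $+$ tangent $+$ derivative condition; cuspidal cross-cap $=$ not front $+$ transverse $+$ derivative condition. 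Your assignments are swapped, and the further claim that ``$\Im=-2$ is the condition $\eta\in T\Sigma$, hence a degenerate singularity'' conflates tangency with degeneracy ($d\rho=0$, which needs \emph{both} $\Re=0$ and $\Im=-2$).

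Two smaller points. First, you assert $\Sigma$ is automatically a regular curve because $\bar g_z\neq 0$; this is false, since regularity of $\Sigma$ is precisely the nondegeneracy $d(|g|^2)\neq 0$, which can fail. Second, your plan to compute first for $f_{cmc}$ and then transfer through the Sym formula \eqref{eq:symNil} is more roundabout than necessary: the paper works directly with $f^{-1}df$ in $\Nili$ via the spinors and uses the \emph{Euclidean} normal $N_e$ on $\Nili\cong\real^3$ to set up $\rho$ and the front criterion; no detour through the CMC surface is needed, and the ``subordinate diagonal correction'' you worry about never arises.
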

\begin{proof}
 Since the induced metric is given by \eqref{eq:inducedmetric}, 
 it is easy to see that $f$ has a singular point at $p$  if $|g|^2=1$.
 Moreover, around a point $p$ where $g(p) = \infty$
 the induced metric can be computed as
 \[
  ds^2 =4|\bar g_{z}|^2\frac{(1-|g|^2)^2}{(1+|g|^2)^4}  |dz|^2
  = 4|\bar {\hat g}_{z}|^2\frac{(1-|\hat g|^2)^2}{(1+|\hat g|^2)^4}  |dz|^2,
 \]
  where $\hat g = 1/g$. The normal Gauss map is nowhere holomorphic and thus the induced metric at the point $g(p) = \infty$ is regular.
  Thus the first claim follows.
  
  The Euclidean cross product $\times_{e}$ of $f^{-1}f_x$ and $f^{-1}f_y$, where $z = x + iy$, is given by 
\begin{align*}
f^{-1}f_x \times_{e} f^{-1}f_y &= -2 i f^{-1}f_{z} \times_{e} f^{-1}f_{\bar z} \\
 &=  |\omega|^2  (|g|^2 - 1)\left( 2 \Im (g )e_1- 2 \Re (g)e_2 + (|g|^2 + 1)e_3\right),    
\end{align*}
 where $\omega = -(2i\bar g_{z})/(|g|^2+1)^2$ as before.
  Note that at a singular point we have,  $\omega (p) =- i \bar g_{z}(p)/2 \neq 0$.
 Then the Euclidean unit normal $f^{-1} N_{e}$ is computed by 
 \[
f^{-1} N_{e} = \frac1{\sqrt{(1+|g|^2)^2 + 4 |g|^2}}\, \left(2 \Im(g) e_1 -2 \Re(g)e_2 + (1+|g|^2)e_3
\right).    
\]
Thus the function $\rho$ defined by $f^{-1}f_x \times_e f^{-1}f_y = \rho f^{-1} N_e$ is 
given by 
\[
 \rho = (|g|^2-1) |\omega|^2 \sqrt{(1+|g|^2)^2 + 4 |g|^2}.
\]
A singular point $p \in M$ is called \emph{non-degenerate} if $d \rho$ does not
vanish at $p$ (see \cite{uy2006}) and otherwise called \emph{degenerate}.  It is easy to see by 
 using $|g|^2 =1$ that
\[
 \dd \rho (p)  = 2 \sqrt{2} |\omega|^2 \left(\frac{\dd g}{g} + \frac{\dd \bar g}{\bar g}\right).
\]
Since $\omega= - i \bar g_{z}(p)/2$ is assumed non-vanishing, 
$\dd \rho(p)$ vanishes if and only if $(\dd g/g + \dd \bar g/\bar g$ vanishes,
which is equivalent to
$g'/(g^2 \bar g') + 1 =0$.
 Therefore $f$ is degenerate at a singular point $p$ if and only if 
 \[
 \Im (g^{\prime}/(g^2 \bar g^{\prime} )) = \Re (g^{\prime}/(g^2 \bar g^{\prime})) + 1 =0,
 \]
which is equivalent to the first conditions.
 
 Next using $g$ and $\omega$ at a singular point $p$, that is $|g(p)|^2=1$ or $\bar g(p) = 1/g(p)$,
we have
\[
f^{-1} d f =(g \omega \dd z + \bar g \bar \omega \dd \bar z) (\Im (g) e_1  - \Re (g)e_2 - e_3).
\]
Thus $\eta = i/(g \omega)$ is the null-direction of $df$ at $p$.
A straightforward computation shows that 
\[
d N_e (p) = -\frac{i}{2\sqrt{2}} \left(\frac{d g}{g}- \frac{d \bar g}{\bar g}\right)
(\Re(g), \Im(g), 0).
\]
Then the null-direction of $d N_e$ at $p$ is proportional to 
\[
 \mu = \overline{\left( \frac{g_z}{g}\right)}-\frac{g_{\bar z}}{g}.
\]
It is known that $f$ is a wave front \cite{krsuy} if and only if
$\det (\mu, \eta) \neq 0$, that is, 
\[
\det (\mu, \eta)= \Im (\bar \mu \eta) = \Re \left\{\left( \frac{g_z}{g}- 
\frac{\bar g_{z}}{\bar g}\right) \frac{1}{g \omega}\right\}\neq0.
\]
Plugging the expression in \eqref{eq:gbarz} into this, we have
the criterion for a wave front. We now assume that
$f$ is a wave front at $p$, that is, $\Re ( g_z/(g^2 \omega) )\neq 0$ at a singular point $p$.
Then the singular curve $\gamma(t)$ with $\gamma(0)=p$ satisfies
$g(\gamma(t)) \overline{g(\gamma(t))}=1$, and thus 
\[
\Re \left( \frac{g_{z}}{g} \dot \gamma +  \frac{g_{\bar z}}{g} \dot {\bar \gamma} \right) = 0
\]
on the singular curve $\gamma (t)$. Then without loss of generality, $\dot \gamma$ is given by 
\begin{equation}\label{eq:singder}
\dot \gamma(t)= i \left(\overline{\left(\frac{g_{z}}{g}\right)}  +  \frac{g_{\bar z}}{g}\right)(\gamma(t)).
\end{equation}
Then using the criterion in \cite[Proposition 1.3 (1)]{krsuy}, $f$ has a cuspidal edge 
 at $p$ if and only if 
\[
\det (\dot \gamma, \eta)|_{t=0}= \Im (\dot{\bar \gamma} \eta )
=  \Im \left(\frac{g_z}{g^2 \omega} + i \frac{(|g|^2+1)^2}{2 |g|^2}\right)
= \Im \left(\frac{g_z}{g^2 \omega}\right)+2 \neq 0,
\]
 where we use $|g|^2=1$. Now the relation $\omega(p) = -i \bar g_{z} (p)/2$ implies the conclusion.
 
 We now assume $\Re ( g_z/(g^2 \omega) )\neq 0$ and 
 $\Im \left(g_z/(g^2 \omega)\right)= -2$. Then using the criterion in \cite[Proposition 1.3 (2) ]{krsuy}, $f$ has a 
 swallowtail at $p$ if and only if 
 \[
 \frac{d \det (\dot \gamma, \eta)}{dt}\Big|_{t=0} \neq 0.
 \]
 Then a straightforward computation by using a relation
 $\frac{d}{dt}|_{t=0} (|g|^2+1)^2|g|^{-2}=0$ on $|g|=1$ shows that
 \[
 \frac{d \det (\dot \gamma, \eta)}{dt}\Big|_{t=0}
 = \Im \left\{\left(\frac{g_z}{g^2 \omega}\right)_z \dot \gamma
 + \left(\frac{g_z}{g^2 \omega}\right)_{\bar z} \dot {\bar\gamma}
 \right\}.
 \]
 Inserting the expressions in \eqref{eq:singder} into the above equation, we have the second 
 criterion. 

Finally we consider a criterion for cuspidal cross-caps. It is known that
\cite[Corollary 1.5]{fsuy}, $f$ at $p = \gamma (0)$ is $\mathcal A$-equivalent to 
a cuspidal cross-cap  if and only if
\begin{enumerate}
    \item $\eta (0)$ is transversal to $\dot \gamma(0)$.
    \item $\chi (0) =0$ and $\dot \chi (0)\neq 0$.
\end{enumerate}
 Here $\chi$ is defined by (see the proof of \cite[Theorem 2.4]{fsuy})
 \[
  \chi =  \det (f_* \dot \gamma, d N_e (\eta), N_e).
  \]
It is easy to verify that the condition (1) can be computed by 
\[
 \det (\dot \gamma, \eta) = 
  \Im (\overline{\dot \gamma}  \eta)  = \Im \left(\frac{g_z}{g^2 \omega}\right)+2 \neq 0
\]
 at $p$. Then the function $\chi$ can be computed as 
 \[
  \chi = \Re \left( \frac{g^{\prime}}{g^2 \omega}\right)  \chi_0,
 \]
  where $\chi_0$ is a smooth function and it is non-vanishing at $p$.
  Thus the condition (2) can be computed by 
\[
\Re \left( \frac{g^{\prime}}{g^2 \omega}\right) = 0, \quad 
\Re \left\{\left(\frac{g_z}{g^2 \omega}\right)_z \dot \gamma
 + \left(\frac{g_z}{g^2 \omega}\right)_{\bar z} \dot {\bar\gamma}
 \right\} \neq 0
 .
\]
  Inserting the expressions in \eqref{eq:singder} into the above equation, we have the third
 criterion. 
\end{proof}
\begin{remark}
 The normal Gauss map $g$ for a generalized spacelike maximal surface in $\Nili$
 is nowhere holomorphic harmonic, that is $g_{\bar z}$ is never zero.
 Therefore, the criteria in Theorem 
 \ref{thm:criteria} is different from that in 
 \cite[Theorem 3.1]{uy2006} and  in \cite[Theorem 2.4]{fsuy}, where 
 the normal Gauss map $g$ was holomorphic.
\end{remark}

\subsection{A simplified characterization}

 We now want to simplify the criteria in Theorem \ref{thm:criteria} 
 using a normalization of the normal Gauss map $g$ at a singular point.
 First we observe:
  \begin{lemma}\label{lem:phi3der}
 Let $g$ be the normal Gauss map of a generalized spacelike maximal surface, and $\omega = -2i\bar g_z/(|g|^2+1)^2$.
  At a non-holomorphic singular point we can choose a coordinate $z$ such that:
  \beq  \label{eq:normalization}
  \dd (g \omega)(p)  =0,  \quad \quad \quad  (g\omega)(p) = 1.
  \eeq
\end{lemma}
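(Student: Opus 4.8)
The plan is to exploit the transformation behaviour of $g$ and $\omega$ under a holomorphic change of conformal coordinate $\tilde z = \phi(z)$. The normal Gauss map $g = \psi_1/\overline{\psi_2}$ is a genuine function, hence coordinate-independent, while $\omega = -2i\,\partial_z\bar g/(1+|g|^2)^2$ satisfies $\omega\,\dd z = \tilde\omega\,\dd\tilde z$ (since $|g|^2$ is a scalar and $\partial_z\bar g = \phi'\,\partial_{\tilde z}\bar g$); equivalently this is the statement, recorded in the excerpt, that $\overline{\psi_2}\sqrt{\dd z}$ is well defined on $M$. Therefore $(g\omega)\,\dd z$ is an intrinsic $(1,0)$-form, $\widetilde{g\omega} = (g\omega)/\phi'$, and — because $\phi$ is holomorphic in $z$ — $\partial_{\bar{\tilde z}}\widetilde{g\omega} = |\phi'|^{-2}\,\partial_{\bar z}(g\omega)$. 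Thus the value $(g\omega)(p)$ and the $(1,0)$-part of $\dd(g\omega)$ at $p$ are at our disposal through the $1$-jet and $2$-jet of $\phi$, but the $(0,1)$-part of $\dd(g\omega)$ at $p$ cannot be altered, so the whole lemma comes down to showing that $\partial_{\bar z}(g\omega)$ already vanishes at every singular point.

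For that key step I would use the identity $g\omega = i\psi_1\overline{\psi_2} = \tfrac{1}{2}\phi_3$, which reduces the claim to $(\phi_3)_{\bar z}(p) = 0$. Adding the two equations of \eqref{eq:fundamental} with $H = 0$ gives $2\varPhi_{\bar z} + [\overline{\varPhi},\varPhi] + \{\overline{\varPhi},\varPhi\} = 0$; taking the $e_3$-component and inserting the structure constants $[e_1,e_2] = e_3$, $[e_1,e_3] = [e_2,e_3] = 0$, $\{e_1,e_3\} = e_2$, $\{e_2,e_3\} = -e_1$, $\{e_i,e_i\} = 0$ yields $(\phi_3)_{\bar z} = -\tfrac{1}{2}(\bar\phi_1\phi_2 - \bar\phi_2\phi_1) = -i\,\Im(\bar\phi_1\phi_2)$. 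Substituting the spinor expressions $\phi_1 = \overline{\psi_2}^2 - \psi_1^2$, $\phi_2 = i(\overline{\psi_2}^2 + \psi_1^2)$ gives $\Im(\bar\phi_1\phi_2) = |\psi_2|^4 - |\psi_1|^4$, hence $\partial_{\bar z}(g\omega) = \tfrac{i}{2}(|\psi_1|^4 - |\psi_2|^4)$. This vanishes precisely when $|\psi_1| = |\psi_2|$, i.e. on the singular set $|g|^2 = 1$ (equivalently where $e^u = 4(|\psi_1|^2-|\psi_2|^2)^2 = 0$), which is exactly what is needed. The same fact can be checked directly by differentiating $g\omega = -2ig\,\overline{g_{\bar z}}/(1+|g|^2)^2$ and using the harmonicity \eqref{eq:harmonicity}, but the route through $\phi_3$ is cleaner and makes the geometric reason transparent: $(\phi_3)_{\bar z}$ is a multiple of $e^{u/2}$, which is exactly the quantity that degenerates at a singular point.

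Finally I would assemble the coordinate change. Set $c := (g\omega)(p)$; this is nonzero because $g(p) \neq 0$ (as $|g(p)| = 1$) and $\omega(p) = -\tfrac{i}{2}\,\overline{g_{\bar z}(p)} \neq 0$ (the map is nowhere holomorphic). Take $\phi(z) = c(z-p) + \tfrac{1}{2}\,\partial_z(g\omega)(p)\,(z-p)^2$, a biholomorphism of a neighbourhood of $p$ onto a neighbourhood of $0$, with $\phi'(p) = c$ and $\phi''(p) = \partial_z(g\omega)(p)$. A one-line computation then gives $\widetilde{g\omega}(p) = c/\phi'(p) = 1$ and $\partial_{\tilde z}\widetilde{g\omega}(p) = 0$, while $\partial_{\bar{\tilde z}}\widetilde{g\omega}(p) = |\phi'(p)|^{-2}\,\partial_{\bar z}(g\omega)(p) = 0$ by the previous step; hence $\dd(\widetilde{g\omega})(p) = 0$ and $(\widetilde{g\omega})(p) = 1$, which is \eqref{eq:normalization} in the new coordinate.

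The only substantive point is the middle step — recognising (or verifying) that $\partial_{\bar z}(g\omega)$ is forced to vanish on the singular locus; once this is in hand, Steps 1 and 3 are routine jet bookkeeping, and there is no obstruction to solving for the jets of $\phi$ since the relevant $1$-jet datum $c$ is nonzero at a non-holomorphic point.
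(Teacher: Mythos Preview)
Your argument is correct, and its overall architecture matches the paper's: first show that the $(0,1)$-part $\partial_{\bar z}(g\omega)$ vanishes automatically at a singular point, then adjust the $2$-jet of a holomorphic coordinate to kill the $(1,0)$-part and normalise the value.

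The route to the key vanishing $\partial_{\bar z}(g\omega)(p)=0$ is different, however. The paper differentiates $g\omega=-2ig\bar g_z/(1+|g|^2)^2$ directly and uses the harmonic map equation $\bar g_{z\bar z}=\tfrac{2g}{1+|g|^2}\bar g_z\bar g_{\bar z}$ together with $|g(p)|^2=1$ to see that $\dd(g\omega)(p)$ is a pure $(1,0)$-form. You instead recognise $g\omega=\tfrac12\phi_3$ and extract the $e_3$-component of the structure equations \eqref{eq:fundamental} with $H=0$, obtaining $(\phi_3)_{\bar z}=\tfrac{i}{2}(|\psi_1|^4-|\psi_2|^4)$; this is a nicer computation because it makes the vanishing on the singular locus $|\psi_1|=|\psi_2|$ (equivalently $e^{u/2}=0$) manifest without any appeal to harmonicity of $g$. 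Your explicit quadratic $\phi$ is also cleaner than the paper's prescription via the ODE $\dd^2 z/\dd\zeta^2+C(\dd z/\dd\zeta)^2=0$, although of course both amount to the same $2$-jet adjustment.
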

\begin{proof}
Using the harmonic map equation \eqref{eq:harmonicity} we obtain:
\[
  \dd (g \omega)(p)  =\frac{i}{2}g\left(\bar g_{z \bar z} - g \bar g^2_z\right) \dd z.
  \]
Since $\omega$ depends on the choice of coordinates, we can in fact
choose coordinates to make the above expression zero. The new coordinate
$\zeta$ is found by solving the equation
$\dd^2 z/\dd \zeta^2 + C (\dd z/\dd \zeta)^2 = 0$, where $C$ is the value at $p$ of $(\bar g_{zz}+g(\bar g_z)^2)/\bar g_z$.
Since $(g \omega)(p) \ne 0$, a constant scaling of the coordinate finishes the proof.
\end{proof}

  Recall that by \eqref{eq:Bandg}, the Abresch-Rosenberg differential $B \, \dd z^2$, the normal Gauss map $g$,
  and the function $\omega$ have the following relation
 \begin{equation*}
  \frac{i}{2}g_z \omega = B.
 \end{equation*}
 Putting these expressions into Theorem \ref{thm:criteria} results
 in the following simplified statement:
 \begin{theorem} \label{thm:criteria2}
 Let $g$ be a normal Gauss map of a generalized maximal surface $f$
 in  $\Nili$, with coordinates chosen as in
 Lemma \ref{lem:phi3der} at a singular point $p$.
 Then the singular point is
 degenerate if and only if 
 \[
 \Im \left( B\right) =  0 \quad \hbox{and} \quad \Re \left(B\right) =1 \]
 at $p$, and
 $f$ is a wave front at  $p$ if and only if
 $\Im (B) \neq 0$.
 Moreover, writing  $\prime = \partial_z$,
 $f$ is $\mathcal A$-equivalent at $p$ to a:
  \begin{enumerate}
     \item  cuspidal edge   if and only if $\Re (B) \neq  1$ and $\Im (B) \neq 0$;
      \item swallowtail
       if and only if $\Re (B) =  1$ and $\Im (B) \neq 0$
      and $\Im \left(B^{\prime}\right) \neq 0$;
    \item cuspidal cross-cap  if and only if
    $\Re (B) \neq  1$,  $\Im (B) = 0$  and
     $\Im \left(B^{\prime}\right) \neq 0$.
\end{enumerate}
\end{theorem}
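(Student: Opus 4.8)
\emph{Proof strategy.} The plan is to substitute the identity $B = \tfrac{i}{2} g_z \omega$ of \eqref{eq:Bandg} and the normalization of Lemma \ref{lem:phi3der} into each of the criteria of Theorem \ref{thm:criteria}, and then simplify. The one preliminary fact needed is that the Abresch--Rosenberg differential $B \, \dd z^2$ is holomorphic: writing $B = g_z \bar g_z / (|g|^2+1)^2$ as in \eqref{eq:Bandg} and differentiating in $\bar z$, the harmonic map equation \eqref{eq:harmonicity} together with its conjugate (the equation for $\bar g$) makes the two resulting terms cancel, so $B_{\bar z} = 0$ and $B' = \partial_z B$ is the ordinary derivative; this is also why the statement may write $\Im(B')$ unambiguously.

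Next I would carry out the local computations at the singular point $p$. Put $G := g \omega$, so that Lemma \ref{lem:phi3der} gives $G(p) = 1$ and $G_z(p) = G_{\bar z}(p) = 0$. Since $g_z \omega = -2 i B$ by \eqref{eq:Bandg}, we have $\dfrac{g'}{g^2 \omega} = \dfrac{g_z \omega}{(g\omega)^2} = \dfrac{-2iB}{G^2}$, which at $p$ equals $-2iB(p)$. Differentiating the right-hand side and using $G(p) = 1$, $\dd G(p) = 0$ and $B_{\bar z} = 0$, one gets $\partial_z\!\big(g'/(g^2\omega)\big)(p) = -2iB'(p)$ and $\partial_{\bar z}\!\big(g'/(g^2\omega)\big)(p) = 0$, hence also $\partial_z\!\big(\Im(g'/(g^2\omega))\big)(p) = -B'(p)$. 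Finally, using $|g(p)|^2 = 1$ together with $G(p)=1$ to express $g_z(p)/g(p)$ and $\bar g_z(p)/\bar g(p)$ in terms of $B(p)$, one finds $(\log|g|)'(p) = \tfrac12\big(g_z/g + \bar g_z/\bar g\big)(p) = i\big(1 - B(p)\big)$.

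With these substitutions the criteria of Theorem \ref{thm:criteria} become purely algebraic. Writing $B(p) = b_1 + i b_2$ one has $-2iB(p) = 2b_2 - 2i b_1$, so the degeneracy condition $\Re[g'/(g^2\omega)] = \Im[g'/(g^2\omega)] + 2 = 0$ is $b_2 = 0$ and $b_1 = 1$; the wave front condition $\Re[g'/(g^2\omega)] \neq 0$ is $\Im(B) \neq 0$; and the cuspidal edge conditions become $\Im(B) \neq 0$ and $\Re(B) \neq 1$. For the swallowtail, $g'/(g^2\omega) + 2i \in \R^\times$ is equivalent to $\Re(B) = 1$ and $\Im(B) \neq 0$, and then $1 - B(p) = -i\,\Im B(p)$, so $\dfrac{1}{(\log|g|)'}(p) = 1/\Im B(p)$ and $\dfrac{1}{(\log|g|)'}\big(\Im(g'/(g^2\omega))\big)'(p) = -B'(p)/\Im B(p)$; taking the imaginary part, the remaining condition is exactly $\Im(B') \neq 0$. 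For the cuspidal cross-cap, $g'/(g^2\omega) + 2i \in i\R^\times$ is equivalent to $\Im(B) = 0$ and $\Re(B) \neq 1$, and then $1 - B(p) = 1 - \Re B(p) \in \R^\times$, so $\dfrac{1}{(\log|g|)'}(p) = -i/(1 - \Re B(p))$ and the real part of $\dfrac{1}{(\log|g|)'}\big(\Im(g'/(g^2\omega))\big)'(p) = iB'(p)/(1 - \Re B(p))$ equals $-\Im B'(p)/(1-\Re B(p))$, which is nonzero exactly when $\Im(B') \neq 0$. This produces all the stated criteria.

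The individual steps are elementary once the set-up is fixed; the step I expect to require the most care --- and which is the real content of the reduction --- is checking that the derivative terms in the swallowtail and cuspidal cross-cap conditions collapse to $B'(p)$. This uses all three normalization facts simultaneously: without $G_z(p) = G_{\bar z}(p) = 0$ there would be surviving terms in $G_z(p)$ and $G_{\bar z}(p)$, and without $B_{\bar z} = 0$ there would be a leftover $\overline{B_{\bar z}}$ term. One should also check that $(\log|g|)'(p) \neq 0$ under the swallowtail and cuspidal-cross-cap hypotheses, so that dividing by it is legitimate; this is immediate from $(\log|g|)'(p) = i(1 - B(p))$ together with $\Re B(p) = 1$, $\Im B(p) \neq 0$ in the first case and $\Im B(p) = 0$, $\Re B(p) \neq 1$ in the second.
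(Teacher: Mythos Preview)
Your proposal is correct and follows exactly the approach the paper indicates: the paper simply states that Theorem~\ref{thm:criteria2} is obtained by ``putting these expressions into Theorem~\ref{thm:criteria}'', i.e.\ substituting the relation $B=\tfrac{i}{2}g_z\omega$ and the normalization of Lemma~\ref{lem:phi3der}, and you have carried out this substitution in full and correct detail. Your observations that $B_{\bar z}=0$ and that $(g\omega)_{\bar z}(p)=0$ holds automatically at a singular point (so only $(g\omega)_z(p)=0$ requires the coordinate choice) are the key facts making the derivative conditions collapse to $\Im(B')$, and your computations of $(\log|g|)'(p)=i(1-B(p))$ and $\partial_z\Im\!\big(g'/(g^2\omega)\big)(p)=-B'(p)$ are right.
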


\begin{remark}\label{rm:hgomega}
 Vanishing of the derivative $h_z$ of the support function
 at a singular point 
 is related to the vanishing of $(g\omega)_z$. In fact the support function  $h$
 can be rephrased as 
 \[
 h^2 = 4 |\omega|^2(1+|g|^2)^2,
 \]
  and a straightforward computation, using the
  relations $|g(p)|^2=1$ and  $(g\omega)_{\bar z}(p)=0$ 
   at a singular point $p$, implies that
   $h_z(p)=0$ is equivalent to $(g\omega)_z(p)=0$.
\end{remark}

\subsection{A construction of singularities through the DPW representation} \label{dpwsect}

We now show how to obtain these singularities via the DPW method described in
Section \ref{numerics}.
A general holomorphic potential for the DPW method is of the form:
\begin{equation}\label{eq:xiz}
 \xi(z) = \sum_{n=-1}^\infty \xi_i(z) \lambda^i \dd z,   
\end{equation}
where $\xi_i$ is holomorphic and takes values in $\mathfrak{sl}(2,\C)$, and is diagonal for even values of $i$ and off-diagonal for odd.
 If $\xi$ satisfies the regularity condition \eqref{regularity}, we can write:
 \begin{equation}\label{eq:generalpot}
\xi_{-1} := \bbar 0 & a(z) \\ b(z) & 0 \ebar, \quad a(0) \neq 0,
\quad
B(z) := -b(z)a(z).
\end{equation}
We now want to construct a generalized maximal spacelike surface in $\Nili$ which 
 has a singularity  at $p$.
 Let $C$ be a solution of $d C = C \tilde  \xi$ taking values in $\LSL$ and 
 choose an initial condition 
\begin{equation}\label{eq:initial}
 C(p, \lambda) =    
 C_0:= \frac1{\sqrt{2}} 
     \begin{pmatrix}
         e^{i c} & i e^{i c} \lambda \\ i e^{-i c}\lambda^{-1}& e^{-i c}
     \end{pmatrix}\in \LSU, \quad c \in \mathbb R,
\end{equation}
 that is, at $p$ the Iwasawa decomposition is trivial $C(p, \lambda) = F(p, \lambda)$ and $V_+(p, \lambda) = \mathrm{id}$.  
 Since $i g \omega$ is given by the product of 
 $(1,1)$- and $(2,1)$- of $F$,  the 
 map $g \omega$ takes value $1$ at $p$ and  we have:
  \begin{theorem}\label{thm:singDPW}
 Let $\xi$ be the potential given by \eqref{eq:generalpot} and $C$ be the solution of $d C = C \xi$ with initial condition 
  \eqref{eq:initial}. Then the resulting generalized spacelike maximal 
 surface $f$ in $\Nili$ through the DPW method has a singular point at 
 $p(=0) \in \D$ and the quadratic differential
  $B \, dz^2$ in \eqref{eq:generalpot} becomes the Abresch-Rosenberg differential for $f$.
\end{theorem}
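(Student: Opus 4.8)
The plan is to verify two things: first, that the surface $f$ produced by DPW from this potential and initial condition actually has $|g(p)|^2 = 1$, hence a singular point at $p$; and second, that the function $B(z) = -a(z)b(z)$ appearing in $\xi_{-1}$ coincides with the Abresch--Rosenberg differential of $f$ as defined in Theorem \ref{thm:Dirac}. The first point is essentially forced by the choice of $C_0$: since $C(p,\lambda) = C_0 \in \LSU$, the Iwasawa splitting at $p$ is trivial, so $F(p,\lambda) = C_0$ and $V_+(p,\lambda) = \mathrm{id}$. Reading off the entries of $C_0$ and comparing with \eqref{eq:extframin}, the generating spinors at $p$ satisfy $\psi_1(p)/\overline{\psi_2(p)} = e^{2ic}$, so $|g(p)| = 1$, which by Theorem \ref{thm:criteria} is exactly the condition for $p$ to be a singular point. (The parenthetical remark just before the statement, that $i g\omega$ is the product of the $(1,1)$- and $(2,1)$-entries of $F$, gives moreover $(g\omega)(p) = 1$, matching the normalization of Lemma \ref{lem:phi3der}.)

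For the identification of $B$, the key is to trace the $\lambda^{-1}$-part of the connection form through the construction. The DPW recipe gives $F = C B_+$ with $B_+$ holomorphic in $\lambda$ on the disc, and the extended frame $F$ satisfies $F^{-1} dF = \alpha^\lambda = U^\lambda dz + V^\lambda d\bar z$ with $U^\lambda, V^\lambda$ as in \eqref{eq:alpha}. A standard gauge computation shows that the residue at $\lambda = \infty$ (equivalently, the coefficient of $\lambda^{-1}$ in $U^\lambda$) is preserved under the gauge transformation by $B_+$ up to conjugation by the $\lambda$-independent leading term $B_+(z,\lambda)|_{\lambda = 0} =: b_0(z)$, which is lower-triangular. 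Thus the $(2,1)$-entry of the $\lambda^{-1}$-coefficient of $U^\lambda$, namely $B e^{-w/2}$, equals (up to the known diagonal conjugation factor) the $(2,1)$-entry $b(z)$ of $\xi_{-1}$ times the appropriate power of the diagonal entry of $b_0$, and similarly the $(1,2)$-entry $-e^{w/2}$ corresponds to $a(z)$. Multiplying these two relations, the conjugation factors cancel and one obtains $B(z) = -a(z)b(z)$, which by \eqref{eq:generalpot} and \eqref{eq:Bandg} is precisely the claim that the quadratic differential $B\,dz^2$ from the potential is the Abresch--Rosenberg differential.

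Concretely I would carry this out in the following steps. Step 1: record that the triviality of the Iwasawa decomposition at $p$ forces $F(p,\lambda) = C_0$, read off $|g(p)| = 1$ and $(g\omega)(p) = 1$, and conclude $p$ is a non-holomorphic singular point (non-holomorphicity holds since $g$ is a nowhere-holomorphic harmonic map by Theorem \ref{thm:mincharact}). Step 2: write $F = C B_+$ and compute $U^\lambda = B_+^{-1}\xi_{-1}\lambda^{-1}B_+ + (\text{higher order in }\lambda) + B_+^{-1}(B_+)_z$; extract the $\lambda^{-1}$-coefficient and note it equals $\mathrm{Ad}(b_0^{-1})\,\xi_{-1}$ where $b_0$ is the constant term of $B_+$ in $\lambda$. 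Step 3: match this against the known form of $U^\lambda$ in \eqref{eq:alpha}, using that $b_0$ is diagonal-times-lower-unipotent, to get the off-diagonal entries proportional to $a(z)$ and $b(z)$; take their product and invoke the definition $B = \tfrac{i}{2}g_z\omega$ together with \eqref{eq:Bandg}.

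The main obstacle is Step 2--3: one must control exactly how the gauge $B_+$ acts on the $\lambda^{-1}$-term of the connection. The subtlety is that $B_+$ is not $\lambda$-independent, but only its leading term $b_0 = B_+|_{\lambda=0}$ contributes to the $\lambda^{-1}$-part of $\mathrm{Ad}(B_+^{-1})\xi_{-1}\lambda^{-1}$; one also has to check that $b_0$ is lower-triangular (this follows from the normalization built into the Iwasawa decomposition, $B_+ \in \LSLP$ with the usual positivity convention) so that the diagonal conjugation factors from $b_0$ cancel when $a$ and $b$ are multiplied. Once this bookkeeping is done, the identification $B(z) = -a(z)b(z)$ and hence the theorem follow immediately from \eqref{eq:Bandg} and Remark \ref{rm:hgomega}.
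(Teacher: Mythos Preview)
Your proposal is correct and follows essentially the same approach as the paper: use triviality of the Iwasawa factorization at $p$ to read off $|g(p)|=1$ from the entries of $C_0$, then compute $F^{-1}F_z$ as the gauge transform of $\xi$ by $B_+$ and compare the $\lambda^{-1}$-part with $U^{\lambda}$ in \eqref{eq:alpha} to identify $-a(z)b(z)$ with the Abresch--Rosenberg coefficient. Two small slips that do not affect the argument: the leading term $b_0$ of $B_+$ is in fact \emph{diagonal} (forced by the twisting in $\LSLP$), not merely lower-triangular, which is what makes the product of the off-diagonal $\lambda^{-1}$-entries gauge-invariant; and the value you compute for $g(p)$ is off by a factor of $i$ (one gets $g(p)=ie^{2ic}$), though of course $|g(p)|=1$ either way.
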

\begin{proof}
 From the standard procedure of the 
 DPW method, an extended frame $F$
 which takes values in $\LSU$ can be constructed by 
 the Iwasawa decomposition of 
 $C$, that is 
  $C = F B_+$, where $C$ is the solution of 
 $\dd C = C \xi$ with the initial 
  condition $C(0)=C_0$. Note that $B_+$ takes values in $\LSLP$.  Since 
  the initial condition takes values in $\LSU$, thus the $B_+$ is 
  identity at $p=0$ and thus $C_0= F$
   at $0$. Thus the Gauss map $g$
  is $i$ at $0$ and therefore 
  the resulting surface $f$ has
  a singular point at $0$.
  Then a direct computation shows that 
  \[
  F^{-1} F_{z} = B_+ (\xi/\dd z) B_+^{-1}
  - B_{+ z} B_+^{-1}
  \]
  holds. Comparing it to $U^{\lambda}$ in \eqref{eq:alpha}, $B(z)$
   in the potential in 
   \eqref{eq:generalpot} is the coefficient function of the Abresch-Rosenberg differential.
   \end{proof}
 We cannot immediately apply the criteria of Theorem \ref{thm:criteria2} to $B(z)$ for an arbitrary potential, because we also need the normalization conditions
 \eqref{eq:normalization} to hold. In fact any harmonic map can be represented by a \emph{normalized} potential $\xi = {\tiny{\bbar 0 & a(z) \\ b(z) & 0 \ebar}} \lambda^{-1} \dd z$, as in Example \ref{example2}.
 After a change of coordinate
  $\int_0^z a(w) \dd w$, these are included in:
   \begin{corollary}\label{cor:singnormlDPW}
   Any holomorphic potential of the form:
    \begin{equation}\label{eq:normalizedinw}
   \xi =
\begin{pmatrix}
 0  & 1 \\
- B(z) & 0
\end{pmatrix} \lambda^{-1} \dd z
 + \sum_{n=1}^\infty \xi_i(z) \lambda^i \dd z
\end{equation}
satisfies the
  normalization given in \eqref{eq:normalization}. 
 The maximal surface $f$ obtained by integrating $\xi$ with the initial condition \eqref{eq:initial} has a
 singularity at the point $z=0$, and
 the singularity criteria in Theorem \ref{thm:criteria2}
 can be applied to the coefficient function $B(z)$.
   \end{corollary}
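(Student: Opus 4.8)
The plan is to verify that the potential in \eqref{eq:normalizedinw} actually produces, after integration with initial condition \eqref{eq:initial}, a maximal surface whose data satisfies the normalization \eqref{eq:normalization}, so that Theorem \ref{thm:criteria2} applies verbatim with $B(z)$ the displayed coefficient function. First I would observe that \eqref{eq:normalizedinw} is a special case of \eqref{eq:generalpot}: here $a(z) \equiv 1$ and $b(z) = -B(z)$, so indeed $a(0) = 1 \neq 0$, the regularity condition \eqref{regularity} holds, and $-b(z)a(z) = B(z)$. Hence Theorem \ref{thm:singDPW} immediately gives that the resulting surface $f$ has a singular point at $z = 0$ and that $B(z)\,\dd z^2$ is the Abresch-Rosenberg differential of $f$. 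It remains only to check the two normalization conditions $(g\omega)(0) = 1$ and $\dd(g\omega)(0) = 0$.

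The first condition, $(g\omega)(0) = 1$, was already noted in the discussion preceding Theorem \ref{thm:singDPW}: since $i g \omega$ equals the product of the $(1,1)$- and $(2,1)$-entries of $F$, and $F(0,\lambda) = C_0$ by triviality of the Iwasawa decomposition at $p$, one computes from \eqref{eq:initial} that this product is $\tfrac1{\sqrt2}e^{ic} \cdot \tfrac{i}{\sqrt2}e^{-ic}\lambda^{-1}$ evaluated appropriately, giving $i g\omega = i$, hence $(g\omega)(0) = 1$. For the second condition, I would compute $(g\omega)_z$ and $(g\omega)_{\bar z}$ at $0$ directly from the extended frame. Writing $F^{-1}F_z = U^\lambda$ as in \eqref{eq:alpha}, the entries of $F$ at $z=0$ are known from $C_0$, and the key point is that with the normalized potential the $\lambda^{-1}$-coefficient of $U^\lambda$ has its off-diagonal $(1,2)$-entry equal to $-e^{w/2}$ with $e^{w/2} = a(z) = 1$ constant; pulling back through $B_+ = \mathrm{id}$ at $0$, the derivative $(g\omega)_z(0)$ is expressed in terms of $w_z(0)$ and the $B_+$-derivative terms. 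Using that $a \equiv 1$ forces the relevant diagonal term $w_z$ to vanish at $0$ — more precisely, that the coordinate change $\int_0^z a(w)\,\dd w = z$ is trivial, which is exactly why one passes to this coordinate in Corollary \ref{cor:singnormlDPW} — one gets $(g\omega)_z(0) = 0$; and $(g\omega)_{\bar z}(0) = 0$ follows from the harmonic map equation as in the proof of Lemma \ref{lem:phi3der}, since the normalization coordinate there is characterized by precisely the condition that $a$ is constant (equivalently $e^{w/2}$ constant) to leading order at $p$.

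The main obstacle I anticipate is bookkeeping the gauge/Iwasawa factors carefully enough to confirm that ``$a(z) \equiv 1$'' in the potential really does translate into the coordinate-normalization of Lemma \ref{lem:phi3der}, rather than some nearby but inequivalent condition. Concretely, one must track how $e^{w/2}$ in $U^\lambda$ relates to $a(z)$ under the Iwasawa splitting $C = FB_+$: from $F^{-1}F_z = B_+(\xi/\dd z)B_+^{-1} - B_{+,z}B_+^{-1}$ (the identity used in the proof of Theorem \ref{thm:singDPW}) and $B_+(0) = \mathrm{id}$, the $\lambda^{-1}$-part of $F^{-1}F_z$ at $0$ is just the $\lambda^{-1}$-part of $\xi/\dd z$, namely the off-diagonal matrix with $(1,2)$-entry $1$; comparing with the $(1,2)$-entry $-\lambda^{-1}e^{w/2}$ of $U^\lambda$ gives $e^{w/2}(0) = -1$ up to the constant normalization, which is a nonzero constant, and one then argues the first derivative of $g\omega$ vanishes because $\dd(g\omega)(p) = \tfrac{i}{2}g(\bar g_{z\bar z} - g\bar g_z^2)\dd z$ from Lemma \ref{lem:phi3der} is manufactured to vanish exactly in this coordinate. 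Once that identification is pinned down, the corollary follows, and Theorem \ref{thm:criteria2} may be read off directly from $B(z) = -b(z)a(z)$ with $a \equiv 1$, i.e. from $B(z)$ itself.
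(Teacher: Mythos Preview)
Your reduction via Theorem~\ref{thm:singDPW} (singularity at $z=0$, and $B(z)\,\dd z^2$ is the Abresch--Rosenberg differential) and the check $(g\omega)(0)=1$ from the initial condition \eqref{eq:initial} are both correct. The gap is exactly the step you yourself flag as the obstacle: establishing $(g\omega)_z(0)=0$. You assert that ``$a\equiv 1$ forces the relevant diagonal term $w_z$ to vanish at $0$,'' and later that the normalization coordinate of Lemma~\ref{lem:phi3der} ``is characterized by precisely the condition that $a$ is constant (equivalently $e^{w/2}$ constant) to leading order.'' But this equivalence is precisely what needs proof: $e^{w/2}$ lives in $U^\lambda=F^{-1}F_z$ and is related to $a(z)$ only through conjugation by the Iwasawa factor $B_+$, and while $B_+(0)=\mathrm{id}$ pins down the \emph{value} $e^{w/2}(0)$, it says nothing about its $z$-derivative. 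Your concluding sentence --- that the expression from Lemma~\ref{lem:phi3der} ``is manufactured to vanish exactly in this coordinate'' --- simply restates the desired conclusion; Lemma~\ref{lem:phi3der} constructs \emph{some} coordinate with this property, but you have not shown it coincides with the given $z$.

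The paper's argument is substantially different and does not attempt a direct frame computation at $z=0$. It first passes through the Birkhoff decomposition $C=C_-C_+$, so that $C_-$ is generated by the pure normalized potential $\hat\xi$ (only the $\lambda^{-1}$ term); then it Iwasawa-splits $C_-=FV_+$ and rewrites $U^\lambda$ in terms of the support function $h$ (using $H=0$). Comparing with $V_+\hat\xi V_+^{-1}-\dd V_+\,V_+^{-1}$ and taking \emph{holomorphic parts} in the sense of \cite{Wu1999} yields the pair of relations $\tfrac12(\log\hat h)_z=-(\log\hat v)_z$ and $\tfrac14\hat h=\hat v^2$, which together force $\hat h$ constant; hence $h_z(0)=0$, and Remark~\ref{rm:hgomega} converts this into $(g\omega)_z(0)=0$. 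None of the Birkhoff step, the holomorphic-part technique, or the use of Remark~\ref{rm:hgomega} appears in your proposal. Your more direct route can in fact be salvaged --- expanding $B_+$ to order $\lambda$ and comparing the $\lambda^{-1}$ and $\lambda^0$ components of $U^\lambda$ at $z=0$ gives two expressions for $w_z(0)$ in terms of the diagonal derivative of $B_+$, forcing both to vanish --- but that computation is absent from what you wrote.
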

   \begin{proof}
   From Theorem \ref{thm:singDPW}, $f$ has a singular point at $0$. 
   Let $C$ be the solution of $\dd C = C \xi$ with
   the initial condition \eqref{eq:initial} and consider 
   the Birkhoff decomposition of $C$ as
\[
C = C_- C_+,
\]
where $C_- \in \LSLN, C_+ \in \LSLP $. Then $C_-$ gives 
\begin{equation}\label{eq:xihat}
\hat \xi = C_-^{-1} d C_- = \begin{pmatrix}
 0  & 1 \\
- B(z) & 0
\end{pmatrix} \lambda^{-1} \dd z,
\end{equation}
i.e. $\hat \xi$ is a normalized potential. Now consider the
Iwasawa decomposition of $C_-$ as 
\[
C_- = F V_+,
\]
where $F \in \LSU, V_+ \in \LSLP$. A straightforward computation 
shows that 
\[
F^{-1} F_{z} = V_+ \hat \xi V_+^{-1} - \dd V_+ V_+^{-1}
\]
holds. Note that $F^{-1} F_{z} = U^{\lambda}$ 
in \eqref{eq:alpha} and it is given by 
\[
U^{\lambda} = 
\begin{pmatrix}
 \frac12 (\log h)_z & \frac14 h \lambda^{-1} \\
 -  4 B h^{-1}\lambda^{-1} & -\frac12 (\log h)_z 
\end{pmatrix},
\]
 where $h$ is the support function and $B$ is the 
 Abresch-Rosenberg differential.
Now taking the holomorphic part of both sides, see 
for an example \cite{Wu1999},  gives
\[
\frac12 (\log\hat h)_z = - (\log \hat v)_z, \quad 
\frac14 \hat h = \hat v^2,
\]
 where we denote the holomorphic part of 
 $h$ by $\hat h$ and the holomorphic part of $V_+$ by  
 $\hat V_+ = \di (\hat v, \hat v^{-1})$. From these equations
 $\hat h$ is constant and in particular $\hat h_z =0$.
  Moreover, at a singular point $z = 0$, $\hat h_z (z=0) 
   = h_z(z=0, \bar z=0)=0$ holds. Finally by the Remark \ref{rm:hgomega}
 it is equivalent to $\partial_z (g\omega)(0)=0$.
 Since $F$ at $z=0$ is given by \eqref{eq:initial},
 $(g\omega)(0)=1$ clearly holds,
 and thus the resulting 
 surface $f$ satisfies the normalization \eqref{eq:normalization}
 at $0$. 
\end{proof}
 
    \begin{figure}[htb]
\centering
$
\begin{array}{ccc}
\includegraphics[height=30mm]{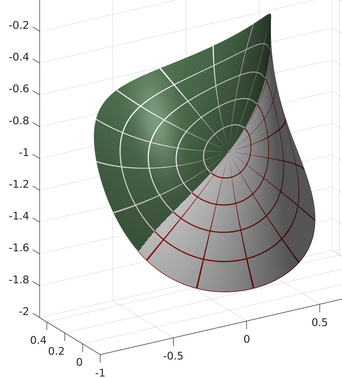} \,\, 
 & \quad
\includegraphics[height=30mm]{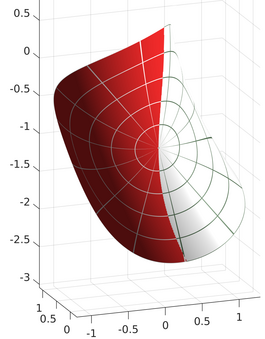} \quad & \, \,
\includegraphics[height=30mm]{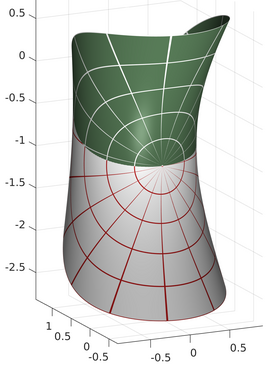} 
\vspace{1ex} \\
\includegraphics[height=24mm]{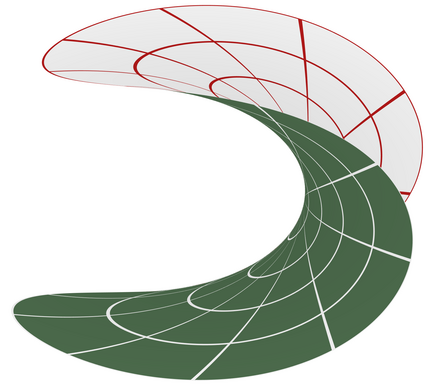} \,\,  & \quad
\includegraphics[height=24mm]{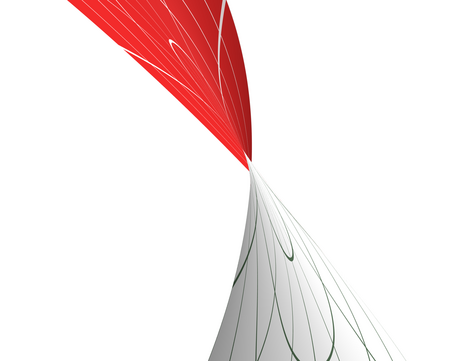}   \quad & \,\,
\includegraphics[height=24mm]{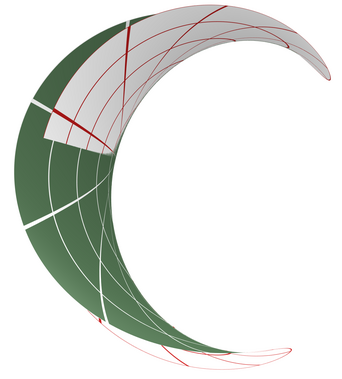} 
\end{array}
$
\caption{Top: CMC $1/2$ surfaces in $\E^3$. Bottom: the corresponding maximal surfaces in $\Nili$, with, in order, cuspidal edge, swallowtail, and cuspidal cross-cap singularities. (Example \ref{singexample}). }
\label{figsing}
\end{figure} 

\begin{example} \label{singexample}
 Local solutions for the potential and initial conditions given above at \eqref{eq:normalizedinw} and \eqref{eq:initial} (with $\xi_i = 0$ for $i>-1$) are computed for the cases
 \begin{enumerate} \item $B= 2-i$: cuspidal edge at $z=0$;
 \item $B=1-i-i z$: swallowtail at $z=0$;
 \item $B=2- i z$: cuspidal cross-cap at $z=0$;
  \end{enumerate}
 and displayed in Figure \ref{figsing}. Note: let $Eq$ denote the equator in $\SSS^2$ perpendicular to the $e_3$-axis, and set $C:=N^{-1}(Eq)$, and $C_{cmc} := f_{cmc}(C)$, i.e., $C_{cmc}$ is the set on $f_{cmc}$ corresponding to the singular set on then
 maximal surface.
 Then, in the images, $C_{cmc}$ is parallel exactly to the $e_3$-axis at the swallowtail singularity, and  perpendicular to the $e_3$-axis at the cuspidal cross-cap (see Theorem \ref{thm:equatorial} below).
\end{example}

\subsection{The generic singularities for maximal surfaces}
To understand the generic singularities for maximal surfaces, first observe that a non-holomorphic harmonic map $N$ through $p \in \SSS^2$
is precisely given by an initial condition $F_0 \in SU(2)$, plus a normalized potential form: $\xi = \begin{pmatrix}
 0  & 1 \\
- B(z) & 0
\end{pmatrix} \lambda^{-1} \dd z$.  The map $N$ is obtained as
$N= \Ad _F E_3$, whre $E_3 = \di(i -i)$ and $F$ is the $SU(2)$ frame obtained from the potential $\xi$ via the DPW method, integrating with the initial condition $\Phi(z_0)=F_0$. The frame $F$ for $N$ has the freedom of right multiplication by any map in into the diagonal subgroup $K$, thus we can assume that the initial condition $F_0$ is of the form:
\[
 F_0 = \begin{pmatrix} r & \sqrt{1-r^2}e^{i c} \\
         -\sqrt{1-r^2}e^{-ic}  & r
       \end{pmatrix}, \quad \quad r \in [0,1], \quad c \in \real /2\pi \Z.
\]
Conversely, given a local harmonic map $N$, an $SU(2)$ frame can be chosen with an initial condition of the above form, and the normalized potential
obtained via the Birkhoff decomposition is unique and, after a change of coordinates, has the form of $\xi$ above.

Thus the data for a local solution for a non-holomorphic harmonic map, and hence for a local generalized maximal surface $f$ in $\Nili$ is the triple $(r,c, B(z)) \in [0,1] \times \real /2\pi \Z \times \mathcal{O}(0)$, where $\mathcal{O}(0)$ denotes the
vector space of germs of holomorphic functions at $0$. The value of $c$ has no geometric significance, since changing $c$ amounts to a rotation about the $e_3$ axis, an isometry of both $\SSS^2$ and of $\Nili$. We therefore, without loss of generality, take $c=0$ and regard the space of local solutions as being
$[0,1] \times \mathcal{O}(0)$.  Additionally, $N$ takes values in the unit circle in the $e_1e_2$ plane (and hence $f$ is singular) at $z=z_0$ if and only if $r = 1/\sqrt{2}$.
Setting $\delta(z) = B(z)-1$ in Theorem \ref{thm:criteria2} we obtain:
\begin{theorem} \label{thm:generic}
 The set of local generalized maximal surfaces $f$ with $f(0)=p$ is in one to one correspondence with the set
 $[0,1] \times \mathcal{O}(z_0)$.  If $(r,\delta)$ is the data for a local solution $f$, then $f$ is singular at $z=z_0$ if and only if $r=1/\sqrt{2}$.
A singularity at $z_0$ is non-degenerate if and only if $\delta(z_0) \neq 0$.
A non-degenerate singularity at $z_0$  is $\mathcal{A}$-equivalent to a:
 \begin{enumerate}
  \item  cuspidal edge
  if and only if $\Re \delta(z_0) \neq 0$ and $\Im \delta(z_0) \neq 0$.
  \item  swallowtail  if and only if $\Re(\delta(z_0)) = 0$, $\Im \delta(z_0)  \neq 0$, $\Im \delta'(z_0) \neq 0$,
  \item  cuspidal cross-cap  if and only if  $\Im \delta (z_0)  = 0$,
  $\Re(\delta(z_0)) \neq 0$ and $\Im(\delta '(z_0)) \neq 0$.
 \end{enumerate}
 \end{theorem}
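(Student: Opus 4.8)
The plan is to deduce Theorem \ref{thm:generic} from Theorem \ref{thm:criteria2} and Corollary \ref{cor:singnormlDPW} by carefully identifying the parameter space of local solutions and then matching the normalization hypotheses. First I would establish the bijection between local non-holomorphic harmonic maps $N$ through a fixed point $p \in \SSS^2$ and triples $(r, c, B(z))$: starting from the observation (already in the text preceding the statement) that any such harmonic map arises via the DPW method from an initial condition $F_0 \in SU(2)$ and a normalized potential $\xi = \bbar 0 & 1 \\ -B(z) & 0 \ebar \lambda^{-1} \dd z$. The freedom of right-multiplying the frame $F$ by a map into the diagonal subgroup $K$ lets one reduce $F_0$ to the displayed two-parameter form $(r,c)$. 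I would then argue uniqueness in the other direction: given $N$, one chooses an $SU(2)$-frame with initial condition in that form, passes to the Birkhoff normalized potential (unique, as recalled after \eqref{eq:generalpot}), and changes coordinates by $\int_0^z a(w)\,\dd w$ to bring it to the stated shape; this is the content of Corollary \ref{cor:singnormlDPW}.

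Second, I would dispose of the parameter $c$: changing $c$ corresponds to conjugating $F_0$ by a diagonal element, which is the lift of a rotation of $\SSS^2$ about the $e_3$-axis; since such a rotation is an isometry of both $\SSS^2$ and $\Nili$ (as noted in Section \ref{harmonicintro} and in the Remark following Definition \ref{def:GSMS}), it produces a congruent maximal surface and does not change the singularity type. Hence without loss of generality $c=0$, and the solution space is $[0,1] \times \mathcal{O}(z_0)$. Next, the singularity condition: $N = \Ad_F E_3$ takes values in the equatorial circle in the $e_1e_2$-plane exactly when the $e_3$-component of $\Ad_{F_0} E_3$ vanishes, and a direct computation with the displayed $F_0$ shows this happens precisely when $r = 1/\sqrt 2$. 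This matches the criterion $|g|^2 = 1$ of Theorem \ref{thm:criteria2} via $g = \psi_1/\overline{\psi_2}$ evaluated from $F_0$.

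Third — and this is where the real work lies — I would invoke Corollary \ref{cor:singnormlDPW} to guarantee that, for a potential of the normalized form \eqref{eq:normalizedinw}, the coordinate-normalization \eqref{eq:normalization} (namely $(g\omega)(z_0)=1$ and $\dd(g\omega)(z_0)=0$) holds automatically at $z_0$, so that the criteria of Theorem \ref{thm:criteria2} apply verbatim to the coefficient function $B(z)$ of the potential. Then the substitution $\delta(z) = B(z) - 1$ translates each clause of Theorem \ref{thm:criteria2}: degeneracy $\Leftrightarrow \Im B = 0,\ \Re B = 1 \Leftrightarrow \delta(z_0)=0$; wave-front/cuspidal-edge $\Leftrightarrow \Re B \neq 1,\ \Im B \neq 0 \Leftrightarrow \Re\delta(z_0)\neq 0,\ \Im\delta(z_0)\neq 0$; swallowtail $\Leftrightarrow \Re B = 1,\ \Im B \neq 0,\ \Im B' \neq 0 \Leftrightarrow \Re\delta(z_0)=0,\ \Im\delta(z_0)\neq 0,\ \Im\delta'(z_0)\neq 0$; cuspidal cross-cap $\Leftrightarrow \Re B \neq 1,\ \Im B = 0,\ \Im B' \neq 0 \Leftrightarrow \Re\delta(z_0)\neq 0,\ \Im\delta(z_0)=0,\ \Im\delta'(z_0)\neq 0$. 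Since $\Im B' = \Im \delta'$ and $\Re B = 1 \Leftrightarrow \Re\delta = 0$ etc., each equivalence is immediate once the normalization is in place.

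The main obstacle I anticipate is not the final algebraic translation but the bookkeeping in the reduction of the frame data: one must check that the residual gauge freedom (right multiplication into $K$, plus the coordinate change $\int_0^z a$, plus the Birkhoff normalization) is \emph{exactly} absorbed, so that the remaining data $(r, B(z))$ is genuinely a complete and non-redundant parametrization of local solutions — in particular that different $(r, B)$ with $r \neq 1/\sqrt2$ or distinct germs $B$ give non-congruent maximal surfaces, and that the normalization \eqref{eq:normalization} is not spoiled by the higher-order terms $\sum_{n\ge 1}\xi_i\lambda^i\dd z$ (this is precisely why Corollary \ref{cor:singnormlDPW} is stated for the full potential \eqref{eq:normalizedinw} and not just its $\lambda^{-1}$ part). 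Once that is granted, the theorem is a direct reformulation of Theorem \ref{thm:criteria2}.
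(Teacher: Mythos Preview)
Your proposal is correct and follows essentially the same route as the paper: the paper's argument consists of the discussion immediately preceding the theorem (parametrizing local solutions by $(r,c,B)$, quotienting out $c$, identifying the singular locus $r=1/\sqrt{2}$) together with the single sentence ``Setting $\delta(z)=B(z)-1$ in Theorem~\ref{thm:criteria2} we obtain,'' and your three steps reproduce exactly this, invoking Corollary~\ref{cor:singnormlDPW} to justify that the normalization~\eqref{eq:normalization} holds so that Theorem~\ref{thm:criteria2} applies directly to the potential's coefficient $B$. Your caveat about the bookkeeping of the gauge freedom is well taken but is handled in the paper at the same informal level you describe.
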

 Since the local solution $f$ through $p$ obtained from a pair $(r,\delta) \in [0,1] \times \mathcal{O}(z_0)$ depends analytically on the pair $(r,\delta)$, it
 is reasonable to use the codimension in the space $[0,1] \times j^k \mathcal{O}(0)$, (where $j^k \mathcal{O}(0)$ denotes the finite dimensional space of $k$-jets at $0$), to define the genericity of the local solution.
  At the $k$-jet level, an $m$-parameter
 family of local solutions is given by a family:
 $\Phi(s,z)= (r(s),j^k \delta^s(z))$. For a cuspidal edge singularity, there is, generically, one condition ($r(s)=1/\sqrt{2}$) on $\Phi(s,-)$,
 and for both the swallowtail and cuspidal cross-cap there is one additional condition. In other words the space of cuspidal edges is a co-dimension $1$ submanifold whilst the other two are co-dimension $2$ submanifolds. A generic $2$-parameter family of solutions intersects these spaces transversely.  Therefore, at the surface level, each of these singularity types is generic.  Finally, these are the only singularities in a generic family of solutions: a solution at $s$ is singular if and only if $r(s) = 1/\sqrt{2}$. Given this, if both the real and imaginary parts of $\delta^s(0)$ are non-zero then the singularity
 is necessarily a cuspidal edge. Otherwise, we can have one more condition in the generic case, and this is either $\Im (\delta^s(0))=0$, or
 $\Re (\delta^s(0))=0$, so either a swallowtail or cuspidal cross cap.

\section{Singularities via the Cauchy problem}  \label{sec:cauchy}
\subsection{The Cauchy problem for a harmonic map into $\mathbb S^2$}
In view of Example \ref{singexample}, it seems interesting to compare the behaviour of the harmonic map $N$ along the equator with the type of singularity of the corresponding maximal surface. Such an investigation involves solving the Cauchy problem along a curve, which we could then take to be the equator.  We therefore describe next how to solve the Cauchy problem for a harmonic map into $\SSS^2$ via the DPW method.

\begin{theorem}  \label{cauchythm}
 Let $J$ be an interval, and suppose given analytic maps,
 $N_0: J \to \SSS^2$ and $W: J \to \SSS^2$, such that $\langle W,N_0 \rangle =0$. Then, on an open set $U \subset \C$ containing the real interval $J \times \{0 \}$, there is a unique harmonic map 
 $N: U \to \SSS^2$ satisfying the Cauchy conditions:
 \[
  N(x,0)=N_0(x), \quad \quad N_y(x,0) = W(x).
 \]
The solution is produced via the DPW method with potential:
\[
 \xi(z) = \frac{1}{2}\left(((\kappa_1(z)-i) E_1 + \kappa_2(z) E_2)\lambda^{-1}
 + 2\kappa_3(z) E_3 + (\kappa_1(z) + i) E_1 + \kappa_2(z) E_2) \lambda\right) \dd z,
\]
where $\kappa_1$, $\kappa_2$ and $\kappa_3$ are holomorphic extensions of the functions:
\beq  \label{abcformula}
 \kappa_1(x) = \langle N_0'(x), W (x) \rangle, \;
 \kappa_2(x) = \langle N_0'(x), N_0(x) \times W(x) \rangle, \;
 \kappa_3 = \langle  W'(x), N_0(x) \times W(x) \rangle.
\eeq
Conversely, any harmonic map into $\SSS^2$ can be locally represented this way in terms of Cauchy data along a curve.
\end{theorem}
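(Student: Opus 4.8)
The plan is to produce the claimed harmonic map by the DPW construction from the stated potential $\xi$, to verify the two Cauchy conditions by restricting the whole construction to the real axis $J\times\{0\}$, and to get uniqueness from the Cauchy--Kovalevskaya theorem together with the analyticity of harmonic maps. I use the structure of the DPW extended frame (as in Theorem~\ref{thm:mincharact} and \cite{DorPW}) and the identification $\su\cong\real^3$ under which $E_1,E_2,E_3$ is a positively oriented orthonormal basis and $[\,\cdot\,,\cdot\,]$ is the cross product, so $[E_1,E_2]=E_3$, $[E_2,E_3]=E_1$, $[E_3,E_1]=E_2$; in particular $N=\Ad_{F|_{\lambda=1}}E_3$ has values in $\SSS^2$. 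Since $N_0,W$ are real-analytic, the functions $\kappa_1,\kappa_2,\kappa_3$ of \eqref{abcformula} are real-analytic on $J$ and extend holomorphically to a simply connected open $U\subset\C$ with $J\times\{0\}\subset U$; form $\xi$. Fix $0\in J$ and choose $F_0\in\SU$ (unique up to a sign fixed by continuity) with $\Ad_{F_0}E_1=N_0(0)\times W(0)$, $\Ad_{F_0}E_2=-W(0)$, $\Ad_{F_0}E_3=N_0(0)$, possible since $(N_0(0)\times W(0),-W(0),N_0(0))$ is a positively oriented orthonormal triple. Solve $\Phi_z=\Phi\xi$ on $U$ with $\Phi(0,\lambda)=F_0$, do the pointwise Iwasawa decomposition $\Phi=FB_+$, $F\in\LSU$, $B_+\in\LSLP$ (valid on a smaller $U$ since $F_0$ lies in the open big cell), and set $N:=\Ad_{F(\cdot,1)}E_3$, which is harmonic by Theorem~\ref{thm:mincharact}.

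The key point is that the Iwasawa factor is trivial along $J$. Since $\kappa_1,\kappa_2,\kappa_3$ are \emph{real} on $J$, the coefficient $(\xi/\dd z)(x)$ satisfies the twisted-loop $\su$-reality condition for $x\in J$, $\lambda\in\SSS^1$: using $E_j^{*}=-E_j$ and $\bar\lambda^{-1}=\lambda$ one gets $((\xi/\dd z)(x))^{*}=-(\xi/\dd z)(x)$. As $\xi$ is holomorphic, $\Phi$ is holomorphic in $z$, so on $J$ one has $\partial_x\Phi(x,0)=\Phi(x,0)\,(\xi/\dd z)(x)$; hence $\Phi(x,0,\cdot)\in\LSU$ for all $x\in J$, $B_+$ is the identity along $J$, and $F(x,0,\cdot)=\Phi(x,0,\cdot)$. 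In particular the $SU(2)$-frame $\mathcal F(x):=F(x,0,1)$ solves
\[
\mathcal F_x=\mathcal F\,\big(\kappa_1 E_1+\kappa_2 E_2+\kappa_3 E_3\big),\qquad \mathcal F(0)=F_0 .
\]

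Now compare $\mathcal F$ with the moving frame $\mathcal G(x)\in\SU$ determined (up to the sign fixed at $0$) by $\Ad_{\mathcal G}E_1=N_0\times W$, $\Ad_{\mathcal G}E_2=-W$, $\Ad_{\mathcal G}E_3=N_0$. Writing $\mathcal G^{-1}\mathcal G_x=\sum_k c_k E_k$, one has $c_k=\langle\partial_x(\Ad_{\mathcal G}E_i),\Ad_{\mathcal G}E_j\rangle$ whenever $[E_i,E_j]=E_k$; differentiating $\langle W,N_0\rangle=0$ and $|W|^2=|N_0|^2=1$ and using the scalar triple-product identities gives $c_1=\langle N_0',W\rangle$, $c_2=\langle N_0',N_0\times W\rangle$, $c_3=\langle W',N_0\times W\rangle$, i.e. $c_k=\kappa_k$. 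So $\mathcal G$ satisfies the same ODE and initial value as $\mathcal F$, whence $\mathcal F=\mathcal G$ and $N(x,0)=\Ad_{\mathcal F}E_3=N_0(x)$. For the transverse derivative, recall the DPW frame satisfies $F^{-1}F_z=\Uk+\lambda^{-1}\Up$ and $F^{-1}F_{\bar z}=V_{\mathfrak{k}}+\lambda V_{\mathfrak{p}}$ with $\Up,V_{\mathfrak{p}}$ off-diagonal and $\Uk,V_{\mathfrak{k}}$ diagonal. Matching powers of $\lambda$ in $(F^{-1}F_z+F^{-1}F_{\bar z})|_J=(\xi/\dd z)|_J$ yields $\Up|_J=\tfrac12((\kappa_1-i)E_1+\kappa_2E_2)$ and $V_{\mathfrak{p}}|_J=\tfrac12((\kappa_1+i)E_1+\kappa_2E_2)$, so $(\Up-V_{\mathfrak{p}})|_J=-iE_1$. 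Since $N_y=i(N_z-N_{\bar z})=\Ad_F\big[\,i(\Up-V_{\mathfrak{p}}),E_3\,\big]\big|_{\lambda=1}$ (the diagonal parts commute with $E_3$ and drop out), we obtain on $J$ that $N_y=\Ad_{\mathcal F}[E_1,E_3]=-\Ad_{\mathcal F}E_2=W$, as required.

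For uniqueness, in a stereographic coordinate $g$ harmonicity is the real-analytic elliptic equation \eqref{eq:harmonicity}, for which $\{y=0\}$ is non-characteristic; by Cauchy--Kovalevskaya there is at most one analytic solution with prescribed $(N|_{y=0},N_y|_{y=0})$, and solutions of \eqref{eq:harmonicity} are real-analytic by elliptic regularity, so the $N$ built above is the unique harmonic map with Cauchy data $(N_0,W)$. For the converse, given a harmonic map and a real-analytic curve in its domain, straighten the curve to $\{y=0\}$ conformally; the residual freedom is by holomorphic reparametrizations real on the axis, and choosing one with $\phi'(x)=|N_y(x,0)|$ (solvable on $J$ and holomorphically extendable) we may assume $|N_y|\equiv1$ along the curve, so $N_0:=N|_{y=0}$, $W:=N_y|_{y=0}$ are admissible Cauchy data; by uniqueness $N$ equals the DPW map from $(N_0,W)$. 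The main obstacle is the bookkeeping in the third paragraph: one must pin down conventions (the cross-product normalization of the bracket, the definition of $\Ad$, the exact $\lambda$-structure of the DPW frame, the twisted-loop reality condition) so that the moving-frame computation reproduces \emph{exactly} the $\kappa_j$ of \eqref{abcformula} and the $\lambda$-matching gives $N_y|_{y=0}=+W$ and not $-W$; these are routine but delicate. A minor point is that the Iwasawa decomposition of $\Phi$ must extend to a full neighborhood of $J$, which follows from openness of the big cell and $\Phi|_J\subset\LSU$.
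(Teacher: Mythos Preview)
Your proof is correct and follows essentially the same route as the paper's: both arguments hinge on the observation that the extended frame restricted to the real axis already lies in $\LSU$ (equivalently, the Iwasawa factor is trivial along $J$), so the DPW potential is simply the holomorphic extension of $F_0^{-1}\dd F_0$, and the formulas for $\kappa_1,\kappa_2,\kappa_3$ then drop out of the moving-frame equations for the adapted frame with $\Ad_F E_3=N$, $\Ad_F E_2=-N_y$. Your treatment is somewhat more explicit than the paper's in spelling out the triviality of the Iwasawa factor via the reality of the $\kappa_i$ on $J$, and in invoking Cauchy--Kovalevskaya for uniqueness, but the underlying mechanism is identical.
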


\begin{proof}
The proof is a  modification of the proof of Theorem 4.4 of \cite{spherical},
replacing the Cauchy data for a spherical surface with the Cauchy data
for its harmonic Gauss map. Since a harmonic map into $\SSS^2$ corresponds exactly to a spherical surface, we only need to demonstrate 
the formulas for $\kappa_i$ above. An extended frame $F$ for the solution has Maurer-Cartan form: 
$F^{-1} \dd F = (U_p \lambda^{-1} + U_\mathfrak{k}) \dd z + (-\bar \Uk^t - \bar \Up^t \lambda ) \dd \bar z$. The Cauchy problem can be solved 
by finding the restriction of this to the real line and then extending holomorphically to 
obtain the DPW potential for the solution.
The solution potential is thus a holomorphic extension of:
\[
 F_0^{-1} \dd F_0 = (\Up \lambda^{-1} + \Uk-\bar \Uk^t -\bar \Up^t \lambda) \dd x,
\]
where $F_0$ is the extended frame for the harmonic map along the curve $y=0$. The frame used in \cite{spherical} satisfies:
\beq \label{framedef}
 N= \Ad_F E_3, \quad  N_y = -\Ad_F E_2, \quad N \times N_y = \Ad_F E_1,
\eeq
where $E_1$, $E_2$ and $E_3$ are an orthonormal basis for $\mathfrak{su}(2)$, chosen such that the Lie bracket is the same as the cross-product. Since $N$ has no branch points, we assume without loss of generality that $N_y \neq 0$.  
 Using $N_y = -\Ad_F E_2$ along $y=0$ we can write 
 \[
 \Up = (1/2)((\kappa_1-i)E_1 + \kappa_2 E_2), \quad \quad 
 \Uk = (1/2)(\kappa_3+di) E_3,
 \]
 where $\kappa_i$
 and $d$ are some real-valued functions. Then
 \[
  W'(x) = N_{yx}(x,0) = -\Ad_F([U-\bar U^t, E_2]),
  \quad \quad F^{-1}F_z = U-\bar U^t.
 \]
Since $\Up-\bar \Up^t = \kappa_1 E_1+\kappa_2 E_2$ and $\Uk-\bar \Uk^t= \kappa_3 E_3$, this becomes, using \eqref{framedef} along $y=0$,
\[
 W'(x) = -\kappa_1 N_0(x)+ \kappa_3 N_0 \times W(x).
\]
Similarly we obtain, along $y=0$:
\[
 N'(x) = \kappa_1  W(x) + \kappa_2 N_0(x) \times W(x).
\]
These give the claimed formulae for $\kappa_1$, $\kappa_2$ and $\kappa_3$. 
\end{proof}
\begin{remark}
 For the purpose of numerical implementations, setting $E_1 = \odi(-i,-i)/2$,
$E_2 = \odi(1,-1)/2$ and $E_3 = \di(i,-i)/2$, the above formula for $\xi$ becomes:
\begin{align}\label{circlepot}
 \xi(z) =\frac{1}{4}\bbar 2 \kappa_3 i & \hspace{-1.5cm}(\kappa_2-1-\kappa_1 i)\lambda^{-1} + (\kappa_2 +1-\kappa_1 i)\lambda \\
 (-\kappa_2-1-\kappa_1 i)\lambda^{-1} +(1-\kappa_2-\kappa_1 i)\lambda & -2 \kappa_3 i \ebar \dd z.
 \end{align} 
 From this we see that the regularity condition
 $a(z) \neq 0$, which ensures that the harmonic map is
 nowhere holomorphic is:
 \beq \label{gcpReg}
\kappa_2- \kappa_1 i \neq 1.
 \eeq
\end{remark}

\subsection{The Cauchy problem along the equator}
Using Theorem \ref{cauchythm}, we can write down a potential for 
an arbitrary harmonic map $N: \C \supset \Omega \to \SSS^2$ that 
takes the real line into a great circle. For the case of the horizontal equator, all possible Cauchy data $N$ and $W$ are of the form:
\beq \label{cauchydata}
 N(x) = (\cos(\theta(x)), \sin(\theta(x)),0), \quad
 W(x) = -\cos(\phi(x)) V(x) + \sin(\phi(x))(0,0,1),
\eeq
where $V(x)=(-\sin(\theta(x)),\cos(\theta(x)),0)$ is the unit tangent 
to $N$, and $\theta(x)$, $\phi(x)$ are arbitrary real-valued functions such that the speed
\[
v(x):= \theta'(x)  \neq 0,
\]
for all $x$. The formulas
at \eqref{abcformula} are here:
\beq \label{abc2}
 \kappa_1(x) = -v(x) \cos(\phi(x)), \quad
 \kappa_2 (x) = -v(x) \sin(\phi(x)), \quad
 \kappa_3 (x) = -\phi'(x).
\eeq
The regularity condition \eqref{gcpReg} is
$iv e^{i\phi} \neq 1$, which becomes:
\[
 (v,\phi) \neq \pm (1, -\pi/2).
\]

If we take the potential \eqref{circlepot}, with these values of $\kappa_i$, and integrate with the identity as the initial condition, we obtain a harmonic map $\tilde N$ that maps the real line to a great circle in $\SSS^2$. Because of the choice of frame \eqref{framedef}, and the initial condition,
we can deduce that the circle is in the plane spanned by
$\tilde N(0,0) = E_3$ and $\tilde N_x(0,0)$.
To obtain a map that maps the real line to the equator in the $E_1 E_2$-plane, we need to choose a different initial condition, namely an element $F_0 \in SU(2)$ that rotates rotates this plane to the $E_1 E_2$-plane. 
Differentiating the formula $\tilde N = \Ad_F E_3$, we have
\beqas
\tilde N_x (0,0) &= &\Ad_F [U - \bar U^t, E_3] \\
&=& [\kappa_1 E_1 + \kappa_2 E_2, E_3 ], \quad
\quad \hbox{at } z=0,\\
&=& v(0) ( \cos(\phi(0)) E_2 - \sin(\phi(0)) E_1).
\eeqas
Solving
\[ \Ad_{R_0} E_3 = E_1, \quad \quad 
 \Ad_{R_0} \tilde N_x(0,0) \propto  E_2\]
 gives the initial condition:
\beq \label{IC2}
R_0 = \frac{1}{\sqrt{2}} \bbar e^{-i\alpha}  & \lambda e^{i\alpha}\\ -\lambda^{-1} e^{-i \alpha} & e^{i \alpha} \ebar, \quad \quad \alpha=\frac{\phi(0)}{2}.
\eeq
In summary:
\begin{theorem}\label{singcauchythm}
 Let $v$ and $\phi$ be a pair of real analytic functions
 from an interval $I$ into $\real$, with $v(t)>0$.
 Let $N$ be the harmonic map in $\SSS^2$ given by the potential $\xi$ at \eqref{circlepot}, with $\kappa_i$ given by the holomorphic extensions of the functions at \eqref{abc2}. Then $N$ maps the real line into a great circle.  If the potential is integrated with 
 the initial condition \eqref{IC2}, then this great circle lies in the plane perpendicular to $E_3$.
\end{theorem}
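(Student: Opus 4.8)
The plan is to reduce everything to Theorem \ref{cauchythm} and then carry out two explicit linear-algebra computations: one to identify the plane containing the great circle traced out by the naive solution $\tilde N$, and one to produce an explicit $SU(2)$ element rotating that plane onto the $E_1E_2$-plane. First I would note that the potential \eqref{circlepot} with $\kappa_i$ given by \eqref{abc2} is precisely the potential of Theorem \ref{cauchythm} for the Cauchy data \eqref{cauchydata}, since substituting \eqref{cauchydata} into \eqref{abcformula} yields exactly \eqref{abc2} (a direct computation using $\langle V, V\rangle = 1$, $N_0 \times V = (0,0,1)$, and $W' = -\phi' V \sin\phi - v V'\cos\phi + \phi'(0,0,1)\cos\phi$, together with $V' = -vN_0$). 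So by Theorem \ref{cauchythm}, integrating this potential from \emph{the identity} produces a harmonic map, and the question is only: which great circle does its restriction to $\R$ trace, and how must the initial condition be altered so that this great circle is the equator perpendicular to $E_3$?

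Next I would identify the great circle traced by $\tilde N$ when the initial condition is the identity. Since the chosen frame satisfies $\tilde N = \Ad_F E_3$ along $y=0$, at $z=0$ we have $F(0,0) = I$, so $\tilde N(0,0) = E_3$. Differentiating, $\tilde N_x(0,0) = \Ad_F[U - \bar U^t, E_3]|_{z=0} = [\kappa_1(0) E_1 + \kappa_2(0) E_2, E_3]$, and using the bracket relations $[E_1,E_3] = -E_2$, $[E_2,E_3] = E_1$ together with $\kappa_1(0) = -v(0)\cos\phi(0)$, $\kappa_2(0) = -v(0)\sin\phi(0)$, this gives $\tilde N_x(0,0) = v(0)\bigl(\cos\phi(0)\, E_2 - \sin\phi(0)\, E_1\bigr)$, exactly the vector written in the excerpt. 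Now $\tilde N$ maps $\R$ into a great circle (this is part of what Theorem \ref{cauchythm} / the construction in \cite{spherical} guarantees — the Cauchy data keeps $\tilde N$ on a circle), and a great circle is determined by any point on it together with its velocity there; hence the circle lies in the plane spanned by $\tilde N(0,0) = E_3$ and $\tilde N_x(0,0)$, i.e.\ the plane $\mathrm{span}\{E_3,\ \cos\phi(0) E_2 - \sin\phi(0) E_1\}$.

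Finally I would exhibit the rotation. Pre-multiplying the frame by a constant $R_0 \in SU(2)$ conjugates $\tilde N$ by $\Ad_{R_0}$, hence the image circle is moved by the corresponding rotation of $\R^3 \cong \su$. We want $\Ad_{R_0} E_3 = E_1$ and $\Ad_{R_0}\tilde N_x(0,0) \parallel E_2$; since $\tilde N_x(0,0)$ lies in the $E_1E_2$-plane it suffices to find $R_0$ mapping $E_3 \mapsto E_1$ and the line $\mathrm{span}\{\cos\phi(0) E_2 - \sin\phi(0)E_1\}$ into $\mathrm{span}\{E_2\}$; concretely, composing the quarter-turn taking $E_3$ to $E_1$ with a rotation by $-\phi(0)$ about $E_3$ does the job, and lifting to $SU(2)$ gives precisely $R_0$ as in \eqref{IC2} with $\alpha = \phi(0)/2$. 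One then checks directly that with this $R_0$ the matrix $\Ad_{R_0}$ sends the plane $\mathrm{span}\{E_3, \cos\phi(0)E_2 - \sin\phi(0)E_1\}$ to the $E_1E_2$-plane, so the new harmonic map $N = \Ad_{R_0}\tilde N$ maps $\R$ into the equator perpendicular to $E_3$, which is the assertion. The only mild obstacle is bookkeeping: keeping the $SU(2)$-versus-$SO(3)$ double cover straight (which is why $\alpha = \phi(0)/2$ rather than $\phi(0)$), and confirming that the great circle really is pinned down by its $1$-jet at $0$ — both are routine once the frame conventions \eqref{framedef} are fixed. I would also remark that the regularity condition $iv e^{i\phi} \neq 1$, i.e.\ $(v,\phi)\neq\pm(1,-\pi/2)$, is inherited unchanged from \eqref{gcpReg} since $R_0 \in SU(2)$ does not affect the $(1,2)$-entry of the normalized potential.
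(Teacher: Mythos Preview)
Your proposal is correct and follows essentially the same route as the paper: the paper's argument is the discussion immediately preceding the theorem statement, and it proceeds exactly as you do --- reducing to Theorem~\ref{cauchythm}, computing $\tilde N(0,0)=E_3$ and $\tilde N_x(0,0)=v(0)(\cos\phi(0)\,E_2-\sin\phi(0)\,E_1)$ from the frame relation \eqref{framedef}, and then solving $\Ad_{R_0}E_3=E_1$, $\Ad_{R_0}\tilde N_x(0,0)\propto E_2$ to obtain \eqref{IC2}. Your write-up is slightly more explicit about why $\tilde N|_{\R}$ lies on a great circle (namely, it is a constant rotation of the Cauchy datum $N_0$, which by construction traces the equator) and about the $SU(2)$-versus-$SO(3)$ bookkeeping, but there is no substantive difference in method.
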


\subsection{Singularity type via the Cauchy problem}
 As always, the product of the $(1,1)$- and $(2,1)$- entries
 of the initial condition $R_0$ in \eqref{IC2} with $\lambda =1$
 gives $(i g \omega)(p)$, so we have
\[
 (g\omega)(0) = i e^{-i \phi(0)}.
\]
To determine the singularity type using Corollary \ref{cor:singnormlDPW},
we first remove the diagonal term in \eqref{circlepot}
by the diagonal gauge  $D_+ = \di (\exp(-\int i \kappa_3/2\,  \dd z),
\exp(\int i\kappa_3/2\, \dd z))$ as $C_2 = C D_+$. Then $C_2$ 
 satisfies $\dd C_2 = C_2 \xi_2$ as 
\[
\xi_2(z)  = 
\begin{pmatrix}
0 & a(z) \lambda^{-1}  + c(z) \lambda \\   
b(z)\lambda^{-1} + d(z) \lambda & 0
\end{pmatrix} \dd z,
\]
where 
\begin{align*}
a &= \frac14 \exp\left(i \int_0^z \kappa_3\,  \dd z\right)
(\kappa_2-1-\kappa_1 i), 
\quad c = \frac14 \exp\left(i \int_0^z \kappa_3\,  \dd z\right)
(\kappa_2+1-\kappa_1 i),  \\ 
 b &= \frac14 \exp\left(-i \int_0^z \kappa_3\,  \dd z\right)
(-\kappa_2-1-\kappa_1 i), \quad 
d = \frac14 \exp\left(-i \int_0^z \kappa_3\,  \dd z\right)
(1-\kappa_2-\kappa_1 i).
\end{align*}
Scaling the coordinate $\int_0^z a(w) \dd w$ by
$-i e^{ i \phi (0)}$ in the proof of
Corollary \ref{cor:singnormlDPW}, gives the normalization $(g \omega) (0) =1$, and hence:
\begin{theorem}\label{singcauchythm2}
 Retain the assumptions in Theorem \ref{singcauchythm} and
 take the spacelike maximal surface $f$ in $\Nili$
 through the DPW method by the potential $\xi$ with the initial condition
 in \eqref{IC2}. Then $f$ is singular along the interval $I \subset \mathbb R$ containing $0$.
 Consider the conformal change of coordinates $\zeta=\zeta(z)$ given by:
 \[
\zeta(z) = i e^{-i\phi(0)}\int_0^z a(w) \dd w,
 \]
where $a(z)$ is the holomorphic extension of:
\[
 a(x) =\frac14
 \left(- v(x)\sin \phi(x) -1+i v(x) \cos \phi(x)\right)
  \exp(-i (\phi(x)-\phi(0))).
\]
Then the singularity criteria in
  Theorem \ref{thm:criteria2} can be applied to the coefficient
  function $\hat B(\zeta)$ of the Abresch-Rosenberg differential $\hat B(\zeta) \dd \zeta^2$. As a function of $z$, we have:
  \beq \label{Bhatformula}
  \hat B( \zeta(z)) = 
  \frac{i v(z)  -e^{i \phi(z)}}{i v(z) - e^{-i \phi(z)}}.
  \eeq
 \end{theorem}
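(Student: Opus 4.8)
The plan is to reduce the statement to Corollary~\ref{cor:singnormlDPW}, using the gauge transformation and coordinate scaling already set up before the statement, and then to read off $\hat B$. By Theorem~\ref{singcauchythm}, the harmonic map $N$ carrying the real line into the horizontal equator — and hence the maximal surface $f$ — is produced from the potential $\xi$ of \eqref{circlepot} with the $\kappa_i$ the holomorphic extensions of \eqref{abc2}, integrated with the initial condition \eqref{IC2}. Since $N$ lies on the equator along $I$, we have $|g|^2=1$ there, so $f$ is singular along $I$ by Theorem~\ref{thm:criteria}.

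To apply Theorem~\ref{thm:criteria2} one needs the normalization \eqref{eq:normalization}. I would obtain it exactly along the lines of Corollary~\ref{cor:singnormlDPW}: after the diagonal gauge $D_+=\di(e^{-\frac i2\int_0^z\kappa_3\,\dd z},\,e^{\frac i2\int_0^z\kappa_3\,\dd z})$ (trivial at $z=0$, so the initial condition is unchanged), the potential becomes $\xi_2$ with off-diagonal $\lambda^{\pm1}$-coefficients $a,b,c,d$; the conformal change $w=\int_0^z a(\zeta)\,\dd\zeta$ — legitimate since the regularity condition $iv e^{i\phi}\neq1$ forces $a(0)=\tfrac14(iv(0)e^{i\phi(0)}-1)\neq0$ — normalizes the upper-right $\lambda^{-1}$-entry to $1$, putting $\xi_2$ in the shape \eqref{eq:normalizedinw}; and the further constant rescaling by $-ie^{i\phi(0)}$, i.e. passing to $\zeta=ie^{-i\phi(0)}\int_0^z a(w)\,\dd w$, is pinned down by the requirement $(g\omega)(0)=1$ (recall $(g\omega)(0)=ie^{-i\phi(0)}$ from the $(1,1)$- and $(2,1)$-entries of \eqref{IC2}). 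One must also check that the initial condition is then of the form \eqref{eq:initial}; this is automatic up to the $e_3$-axis rotation built into the constant rescaling, which is an isometry of both $\SSS^2$ and $\Nili$, and so changes neither the singularity type nor — by the spinor formula $B=\tfrac12(\psi_1\overline{\psi_2}_z-\overline{\psi_2}(\psi_1)_z)$ when $H=0$ — the Abresch--Rosenberg differential. Then Corollary~\ref{cor:singnormlDPW} applies: \eqref{eq:normalization} holds at $\zeta=0$, and $\hat B(\zeta)$ is the coefficient of the Abresch--Rosenberg differential $\hat B\,\dd\zeta^2$ in this coordinate.

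It remains to compute $\hat B$. Since $B\,\dd z^2$ is a quadratic differential and its coefficient in $\xi_2$-coordinates is $-ab$ (by \eqref{eq:generalpot}), the change of coordinate gives
\[
 \hat B(\zeta) = (-ab)\Big(\frac{\dd z}{\dd\zeta}\Big)^2 = \frac{b}{a}\,e^{2i\phi(0)}.
\]
Inserting \eqref{abc2} with $\kappa_3=-\phi'$, so that $\int_0^z\kappa_3\,\dd z=\phi(0)-\phi(z)$, and using the identities $\kappa_2-1-\kappa_1 i=iv e^{i\phi}-1$ and $-\kappa_2-1-\kappa_1 i=iv e^{-i\phi}-1$, one finds $\dfrac{b}{a}=e^{2i(\phi-\phi(0))}\,\dfrac{iv e^{-i\phi}-1}{iv e^{i\phi}-1}$, hence
\[
 \hat B(\zeta(z)) = e^{2i\phi}\,\frac{iv e^{-i\phi}-1}{iv e^{i\phi}-1} = \frac{iv(z)-e^{i\phi(z)}}{iv(z)-e^{-i\phi(z)}},
\]
which is \eqref{Bhatformula}.

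The step I expect to be the real obstacle is the one sketched in the second paragraph: carefully verifying that the single constant factor $ie^{-i\phi(0)}$ (with the attendant diagonal $SU(2)$-conjugation) simultaneously produces the normalized potential \eqref{eq:normalizedinw}, the initial condition \eqref{eq:initial}, and the normalization $(g\omega)(0)=1$ — i.e. keeping consistent track of which quantities are quadratic differentials ($B$), which are $(1,0)$-forms ($\omega$, $g\omega$) and which are scalars ($g$). Everything else is a citation of Theorems~\ref{singcauchythm} and~\ref{thm:criteria2} and Corollary~\ref{cor:singnormlDPW}, together with an elementary trigonometric simplification.
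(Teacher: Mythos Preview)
Your proposal is correct and follows essentially the same route as the paper: apply the diagonal gauge $D_+$ to kill the $\kappa_3$-term, pass to the coordinate $\zeta=i e^{-i\phi(0)}\int_0^z a$, invoke Corollary~\ref{cor:singnormlDPW} for the normalization \eqref{eq:normalization}, and then read off $\hat B=(b/a)e^{2i\phi(0)}$ and simplify via $\kappa_2-1-\kappa_1 i=ive^{i\phi}-1$, $-\kappa_2-1-\kappa_1 i=ive^{-i\phi}-1$, $\int_0^z\kappa_3=\phi(0)-\phi$. Your extra care about the initial condition and the $e_3$-rotation is a welcome gloss, but the argument is the paper's.
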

 Note: to obtain the formula \eqref{Bhatformula}, write
 \[
  \hat B(\zeta) (\dd \zeta)^2 = -a(z) b(z) (\dd z)^2
  = \frac{b(z)}{a(z)} e^{2i\phi(0)} (\dd \zeta)^2,
 \]
and substitute 
\[
4a(z) = (\kappa_2(z)-1-\kappa_1(z)i)
e^{i \int_0^z \kappa_3 \dd z}, \quad
4 b(z) = (-\kappa_2(z)-1-\kappa_1(z) i) e^{-i \int_0^z \kappa_3 \dd z},
\]
 using the formulas
 in terms of $v$ and $\phi$ at \eqref{abc2}.
\begin{remark}
At the point $z=\zeta=0$, the values that determine
the singularity type are:
\[
1-\Re(\hat B)\big|_{z=0} =
\frac{2 (v + \sin \phi) \sin \phi }
{1+2v \sin \phi + v^2},
\]
\[
 \Im(\hat B)\big|_{z=0} =
  \frac{2 \sin \phi \cos \phi}
  {1+ 2v \sin \phi + v^2}.
\]
According to Theorem \ref{thm:criteria2}, we have
a cuspidal edge at $z=0$ provided:
\beq \tag{CE} \label{ce}
0 \neq \phi(0) \neq \pm \pi/2 \quad \hbox{and} \quad
v(0) \neq -\sin (\phi(0)).
\eeq
Consider now the criteria of Theorem \ref{thm:criteria2} for cuspidal cross-cap and swallowtail singularities.
The condition $\Im(\hat B)' \neq 0$, needed for both singularities, will hold generally provided $\phi'(0) \neq 0$. Assuming this, arbitrary choices of $v$ and $\phi$ that make the first expression above vanish but not the second, i.e.:
\beq \tag{ST} \label{st}
 v(0)=- \sin (\phi(0)), \quad \quad 0 \neq \phi(0) \neq \pm \pi/2
\eeq
will have a swallowtail singularity at $z=0$. Likewise, choices that make the second expression vanish but not the first, i.e.:
\beq \tag{CC} \label{cc}
 \phi(0) = \pm \pi/2, \quad  \quad v(0) \neq -\sin (\phi(0)),
\eeq
will result in a cuspidal cross-cap singularity.

From \eqref{cauchydata}, $\phi(0)$ is the angle between $N_y$ and the $E_1 E_2$-plane. Thus,
geometrically, the behaviour of the Gauss map at a cuspidal cross-cap is clear: if coordinates are chosen
so that $N_x$ is parallel to the equator, then the cuspidal cross-cap occurs generically if $N_y(0)$ is
parallel to the $E_3$ axis.  On the other hand, for a swallowtail, the angle between $N_y(0)$ and the $E_1 E_2$-plane must be a specific value depending on the ratio $v(0)/1$ of the speed of $N_x$ to the (unit) speed of $N_y$, namely $\phi(0) = \arcsin(-v(0))$.
\end{remark}
 \begin{figure}[htb]
\centering
$
\begin{array}{ccc}
\includegraphics[height=27mm]{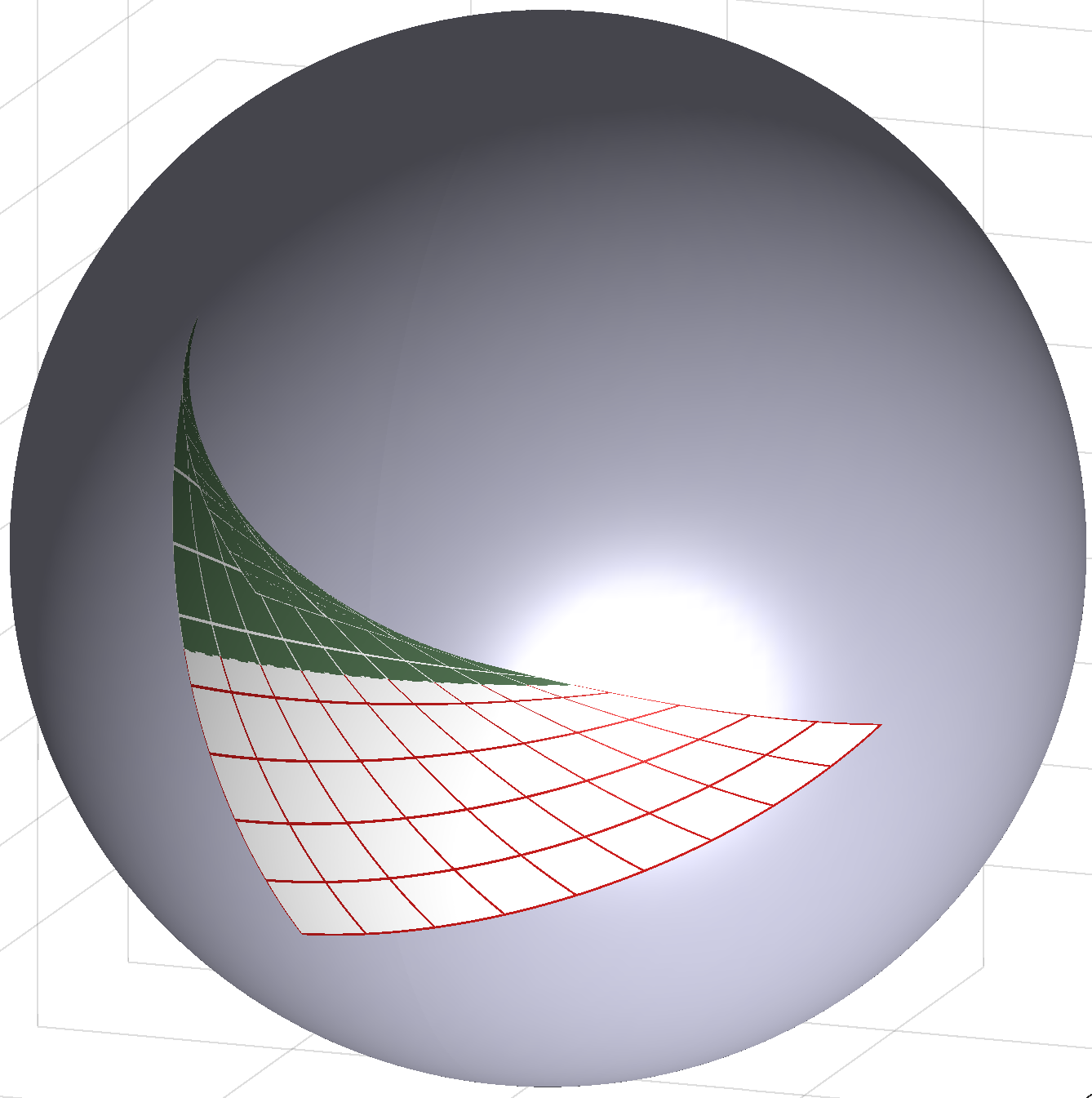} \quad
 & \quad
\includegraphics[height=27mm]{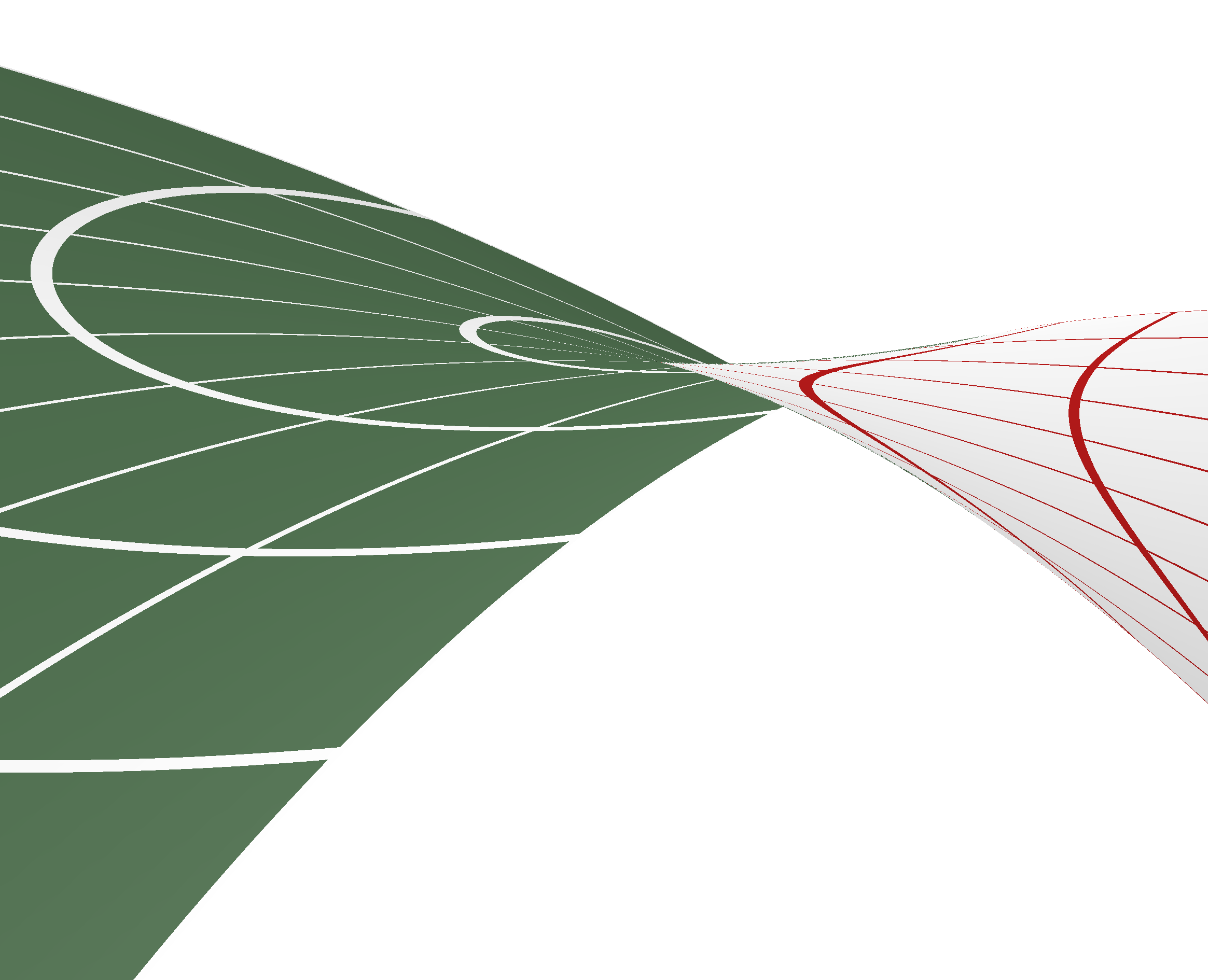} \quad & \quad
\includegraphics[height=27mm]{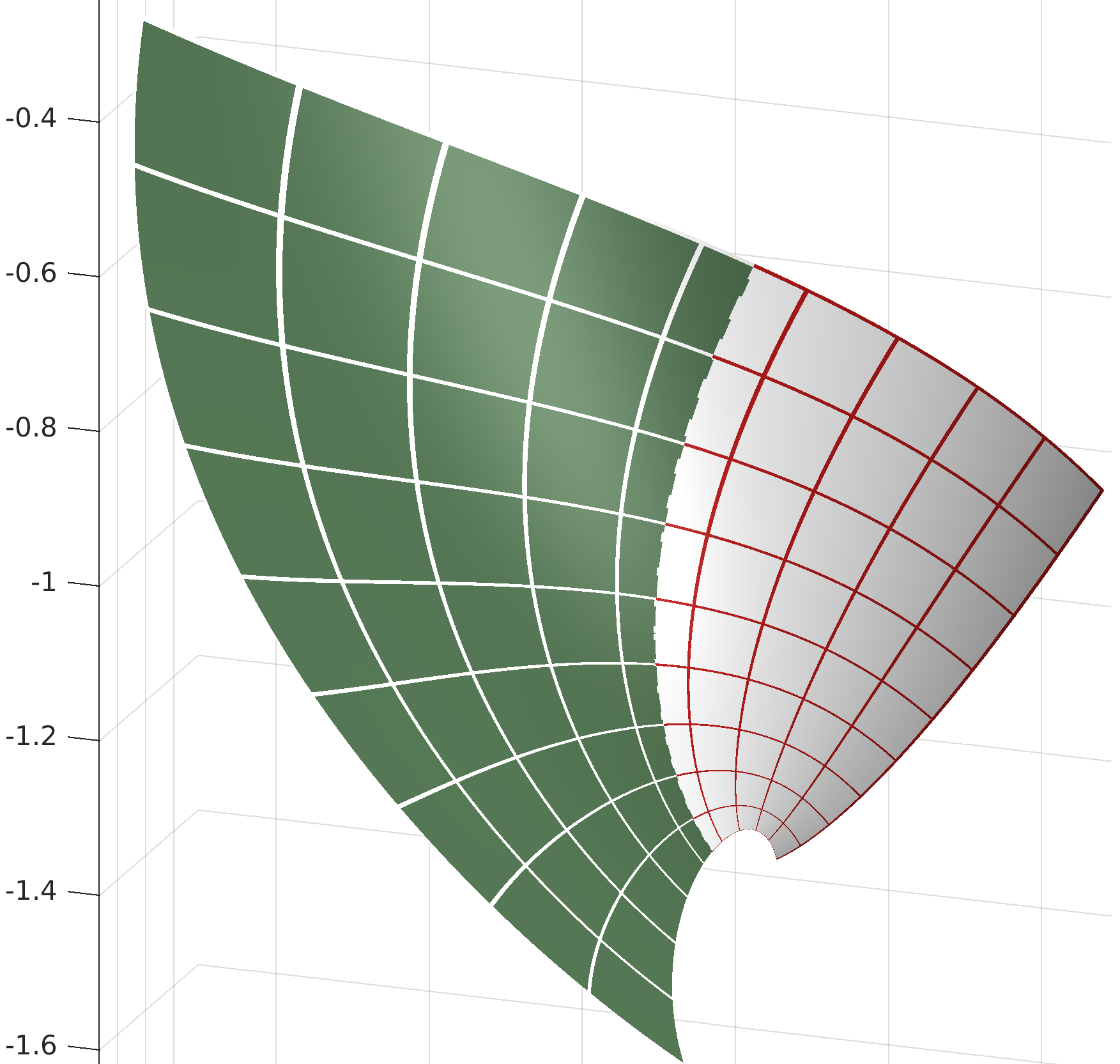}
\vspace{1ex} \\
\includegraphics[height=27mm]{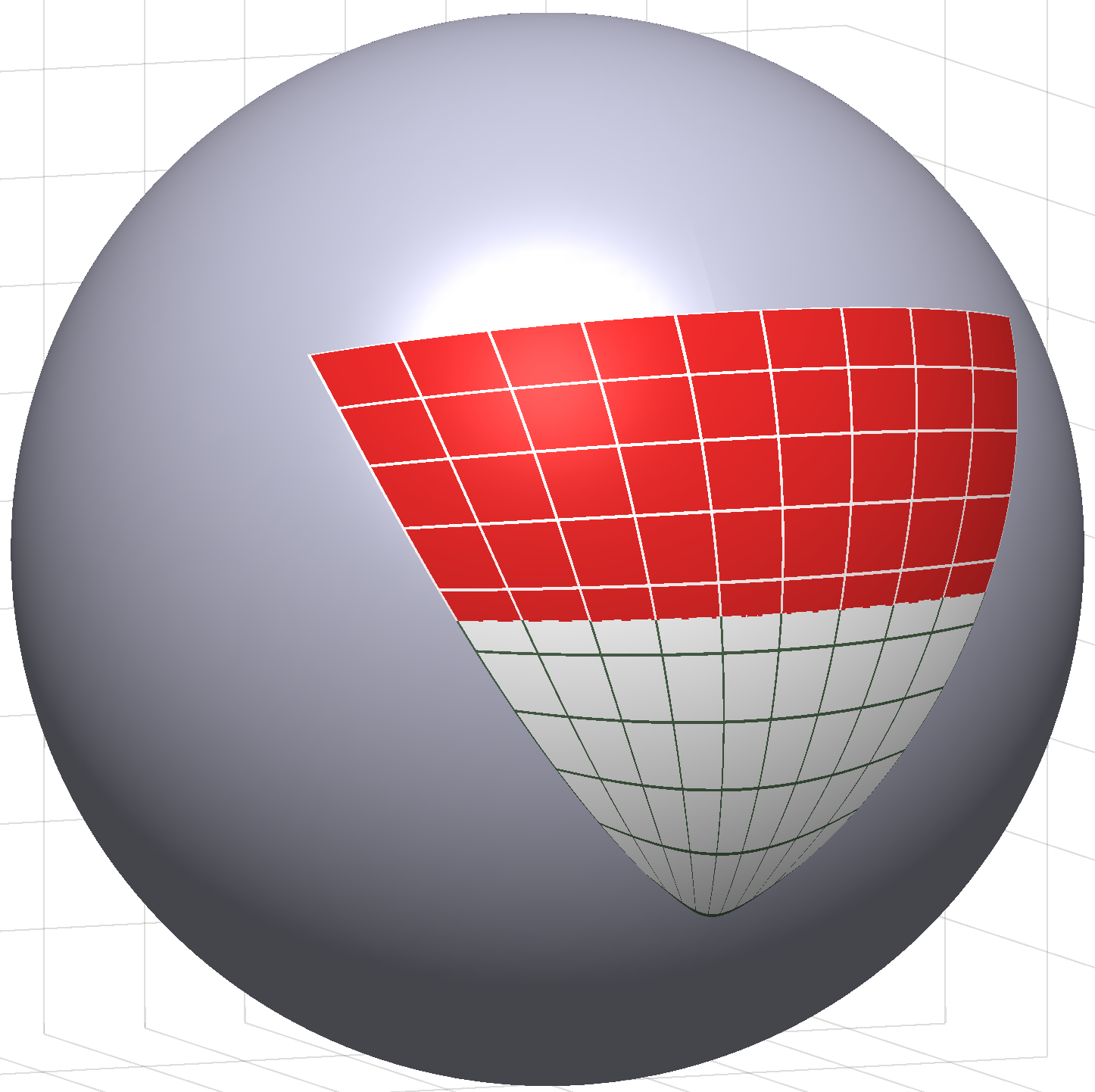}  \quad & \quad
\includegraphics[height=27mm]{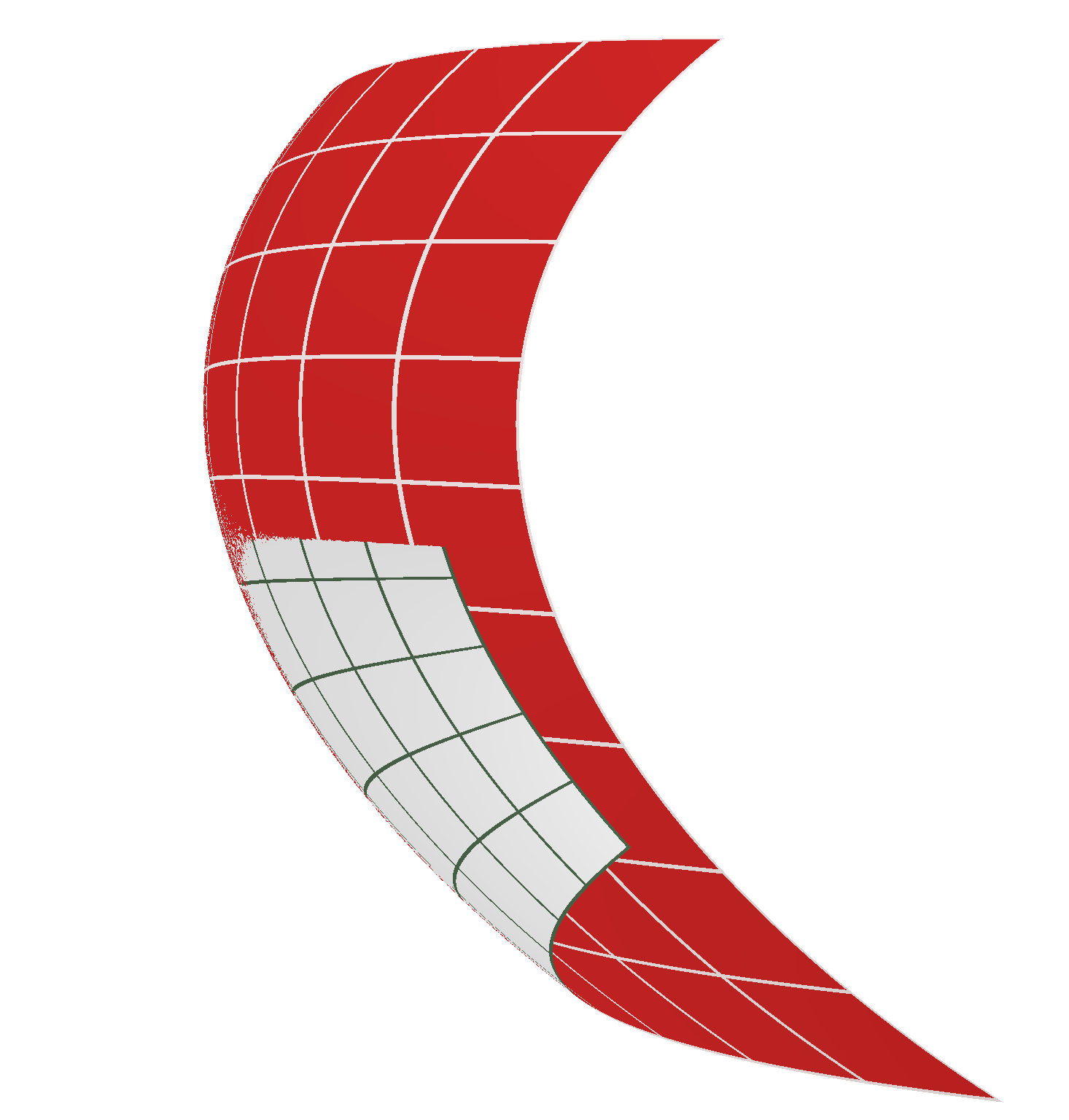}   \quad & \quad
\includegraphics[height=27mm]{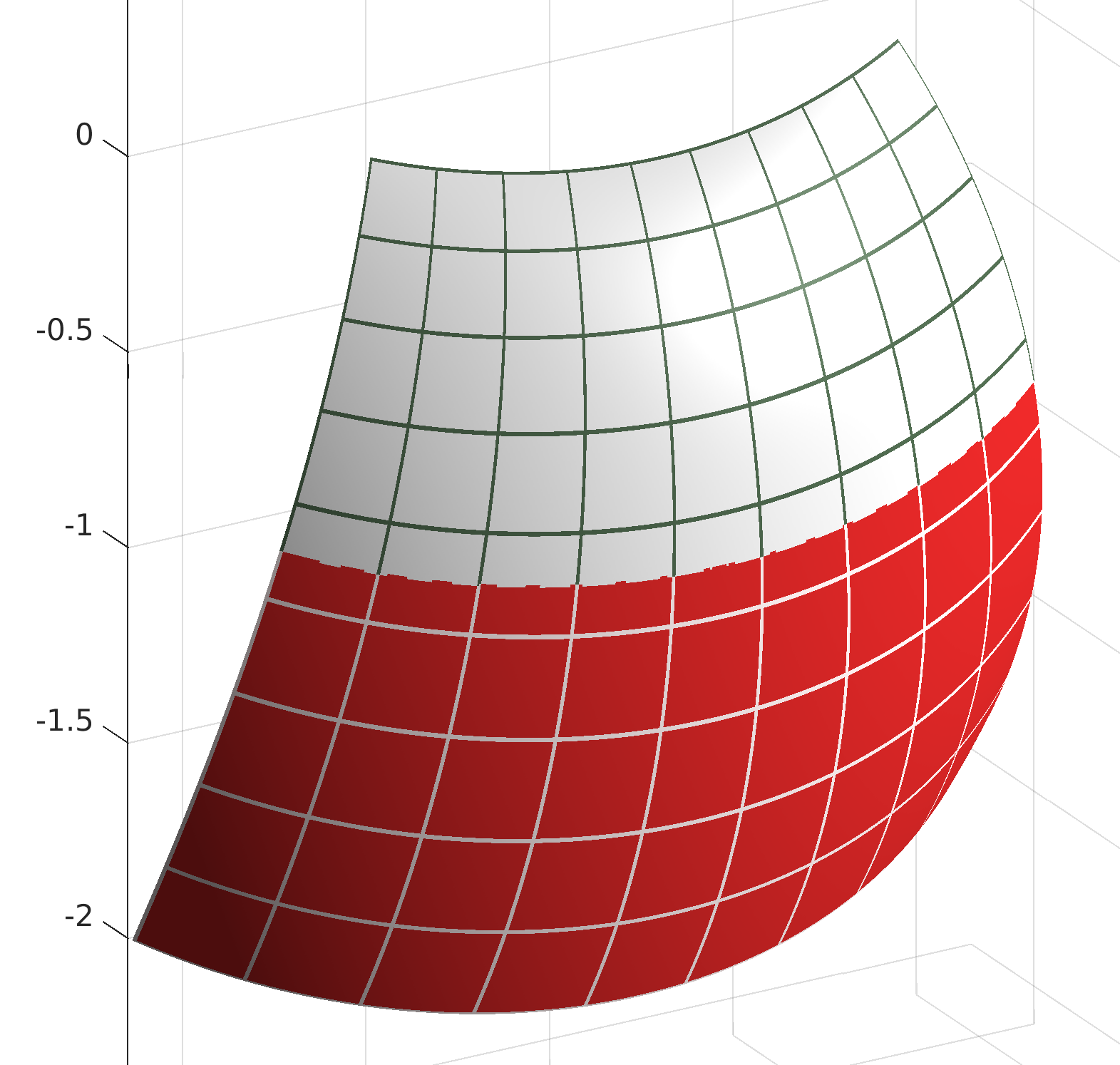}
\end{array}
$
\caption{Solutions to the geometric Cauchy problem using Theorem \ref{singcauchythm2}.
 See Example \ref{examplegcp}. Top: swallowtail. Bottom: cuspidal cross-cap.}
\label{figgcp}
\end{figure}

\begin{example} \label{examplegcp}
A local solution for a swallowtail, taking $v(x)=-1/\sqrt{2}$ and $\phi(x) = x+\pi/4$, and for a cuspidal cross-cap, taking $v(x)=1$ and $\phi(x)=x + \pi/2$
are computed and shown in Figure \ref{figgcp}.
 Left to right are shown the Gauss map, the maximal surface in $\Nili$, and the CMC surface in $\real^3$ with the same Gauss map.
\end{example}


\section{Geometry of the associated CMC surface and an application}  \label{sect:equatorial}

\subsection{The equatorial curve on the CMC surface}
Let $N: \D \to \SSS^2$ be a nowhere holomorphic harmonic map,
fix an embedding of $\SSS^2$ into $\real^3= \mathfrak{su}(2)$,
and let $f$ and $f_{cmc}$ denote the associated maximal suface in $\Nili$,
and CMC $1/2$ surface in $\real^3$ respectively (see Theorem \ref{thm:Sym}).  As in Example \ref{singexample}, let $C_{cmc}$ denote the set
$f_{cmc}(C)$ where $C \subset \D$ is the preimage under $N$ of the equator in the $E_1 E_2$-plane, which we may call the
\emph{equatorial curve} in the image of $f_{cmc}$. It is the set of
points on the CMC surface where the surface normal is perpendicular to $E_3$. The map $f_{cmc}$ is regular, hence $C_{cmc}$ is a regular curve in $\real^3$.
\begin{theorem}  \label{thm:equatorial}
Let $N$, $f$ and $f_{cmc}$ be above, and let $p$ be a
non-degenerate singular point.
 The maximal surface $f$ has a
 cuspidal cross-cap singularity at $p$ if and only if
 the tangent at $f_{cmc}(p)$ to the equatorial curve is
 perpendicular to $E_3$,
 and a swallowtail at $p$ if and only if the
 tangent is parallel to $E_3$.
\end{theorem}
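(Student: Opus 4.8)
The plan is to identify the equatorial curve with the singular curve of $f$ and to push it forward by $f_{cmc}$ in a frame in which the Abresch--Rosenberg differential is visible, so that Theorem \ref{thm:criteria2} can be applied. Since $N$ is the harmonic Gauss map of $f_{cmc}$ as well as of $f$, the set $C$ on which $N$ is perpendicular to $E_3$ is precisely the singular set $\{|g|=1\}$ of $f$ by \eqref{eq:inducedmetric}; near the non-degenerate point $p$ this is a regular curve $\gamma$ with $\gamma(0)=p$ and $\dot\gamma(0)$ given by \eqref{eq:singder}, and since $f_{cmc}$ is an immersion the tangent line to $C_{cmc}$ at $f_{cmc}(p)$ is spanned by $T:=(f_{cmc})_z\,\dot\gamma(0)+(f_{cmc})_{\bar z}\,\overline{\dot\gamma(0)}$. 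After a rotation of $N$ about the $E_3$-axis --- an isometry of $\Nili$ which rotates $f_{cmc}$ about the $E_3$-axis, hence preserves both the singularity type and the properties ``parallel to $E_3$'' and ``perpendicular to $E_3$'' --- I may assume $g(p)=i$. I then take the normalized coordinate of Lemma \ref{lem:phi3der} and realize $f$ by DPW with a potential as in Corollary \ref{cor:singnormlDPW} and the initial condition \eqref{eq:initial} with $c=0$, so that the extended frame has $F^{-1}F_z=U^\lambda$ with the entries displayed in the proof of Corollary \ref{cor:singnormlDPW} and $F(p)|_{\lambda=1}=\tfrac1{\sqrt2}\left(\begin{smallmatrix}1&i\\i&1\end{smallmatrix}\right)$.

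The next step is to compute $(f_{cmc})_z$ from \eqref{eq:SymEuc}. Differentiating $f_{cmc}=-i\lambda(\partial_\lambda F)F^{-1}-N$ in $z$ gives $(f_{cmc})_z=\Ad_F\bigl(-i\lambda\,\partial_\lambda U^\lambda-[U^\lambda,E_3]\bigr)$; since $H=0$ (so $e^{w/2}=-h/4$) the $(2,1)$-entries of $-i\lambda\,\partial_\lambda U^\lambda$ and of $[U^\lambda,E_3]$ agree, so they cancel in the difference, and at $\lambda=1$ one is left with $(f_{cmc})_z=\tfrac{ih}{2}\Ad_F\left(\begin{smallmatrix}0&1\\0&0\end{smallmatrix}\right)=\tfrac{ih}{2}\bigl(\Ad_FE_2+i\,\Ad_FE_1\bigr)$, $h>0$ being the support function (this also matches $f_{cmc}=f_{cgc}-N$ together with $(f_{cgc})_z=iN\times N_z$). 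A direct matrix computation with $F(p)|_{\lambda=1}=\tfrac1{\sqrt2}\left(\begin{smallmatrix}1&i\\i&1\end{smallmatrix}\right)$ gives $\Ad_{F(p)}E_1=E_1$ and $\Ad_{F(p)}E_3=E_2=N(p)$ (which lies on the equator, as it must), hence $\Ad_{F(p)}E_2=N(p)\times E_1=-E_3$. Thus at $p$ the tangent plane of $f_{cmc}$ is $\mathrm{span}(E_1,E_3)=N(p)^\perp$, we have $(f_{cmc})_z(p)=\tfrac{ih(p)}{2}(-E_3+iE_1)$, and using $(f_{cmc})_{\bar z}=\overline{(f_{cmc})_z}$ (conjugation of the $E_j$-coefficients) one obtains $T=-h(p)\bigl(\Re\dot\gamma(0)\bigr)E_1+h(p)\bigl(\Im\dot\gamma(0)\bigr)E_3$.

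It then remains to express $\dot\gamma(0)$ through $B$. From \eqref{eq:singder} and $g(p)=i$ one has $\dot\gamma(0)=g_{\bar z}(p)-\overline{g_z(p)}$. The normalization $(g\omega)(p)=1$ gives $\omega(p)=1/g(p)=-i$; substituting $|g(p)|=1$ into \eqref{eq:gbarz} (keeping in mind that there $\bar g_z=\partial_z\bar g=\overline{g_{\bar z}}$, not $\overline{g_z}$) forces $g_{\bar z}(p)=2$, while \eqref{eq:Bandg} gives $g_z(p)=2B(p)$. Hence $\dot\gamma(0)=2\bigl(1-\overline{B(p)}\bigr)$, so $\Re\dot\gamma(0)=2(1-\Re B(p))$ and $\Im\dot\gamma(0)=2\,\Im B(p)$, and therefore $T=2h(p)\bigl[(\Re B(p)-1)E_1+\Im B(p)\,E_3\bigr]$ with $h(p)>0$. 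Consequently $T$ is parallel to $E_3$ precisely when $\Re B(p)=1$ and perpendicular to $E_3$ precisely when $\Im B(p)=0$; since $p$ is non-degenerate, $B(p)\neq1$, so by Theorem \ref{thm:criteria2} the first condition (together with the generic $\Im B'(p)\neq0$) characterizes a swallowtail and the second a cuspidal cross-cap, which is the assertion.

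The one genuinely delicate point, I expect, is the second paragraph: fixing the correct gauge of the extended frame so that $(f_{cmc})_z=\tfrac{ih}{2}\Ad_F\left(\begin{smallmatrix}0&1\\0&0\end{smallmatrix}\right)$, and then evaluating $\Ad_{F(p)}$ on $E_1,E_2,E_3$ without sign slips. Once those identities are in place, the geometric statement --- that the equatorial curve on $f_{cmc}$ records $\Re B-1$ in the $E_1$-direction and $\Im B$ in the $E_3$-direction at $p$ --- is immediate, and the conclusion is just a reading of Theorem \ref{thm:criteria2}.
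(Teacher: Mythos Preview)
Your argument is correct and arrives at the same conclusion as the paper, but by a genuinely different route.

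The paper's proof works through the geometric Cauchy problem of Section~\ref{sec:cauchy}: it parametrizes the singular curve so that $y=0$ maps to the equator, reads off the angle $\phi$ between $N_y$ and the $E_1E_2$-plane and the speed $v=|N_x|$, and then uses the elementary relations \eqref{eq:cmc}, namely $f^-_x=N\times N_y-N_x$ and $f^-_y=-N\times N_x-N_y$, to compute the tangent to the equatorial curve directly in terms of $(v,\phi)$. The swallowtail/cross-cap conditions \eqref{st} and \eqref{cc} are then matched against these expressions.

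You instead stay inside the DPW framework of Section~\ref{dpwsect}: normalize to $g(p)=i$, $(g\omega)(p)=1$, compute $(f_{cmc})_z=\tfrac{ih}{2}\Ad_F\left(\begin{smallmatrix}0&1\\0&0\end{smallmatrix}\right)$ from the Sym formula, evaluate $\Ad_{F(p)}$ on $E_1,E_2,E_3$, and push forward $\dot\gamma(0)$ from \eqref{eq:singder}. The payoff is the very clean closed formula $T=2h(p)\bigl[(\Re B(p)-1)E_1+\Im B(p)\,E_3\bigr]$, which makes the correspondence with Theorem~\ref{thm:criteria2} completely transparent: the $E_1$-component records $\Re B-1$ and the $E_3$-component records $\Im B$. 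The paper's approach is lighter on machinery (no Sym formula, no explicit $F(p)$), while yours yields a sharper algebraic identity and bypasses the Cauchy-problem coordinates entirely.

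One small remark: both your proof and the paper's are silent on, or only flag parenthetically, the second-order condition $\Im B'(p)\neq0$ in Theorem~\ref{thm:criteria2}. Strictly speaking, the tangent direction to $C_{cmc}$ encodes only $\Re B(p)-1$ and $\Im B(p)$, so the biconditional in the theorem statement should be read modulo that generic second-order hypothesis; your parenthetical ``together with the generic $\Im B'(p)\neq0$'' is an honest acknowledgment of this, and matches how the paper treats it.
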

\begin{proof}
 This follows from the equations \eqref{eq:cmc} relating a CMC surface to its harmonic Gauss map. We can always locally choose coordinates as in Theorem \ref{singcauchythm}, with $z=0$ corresponding to $p$.
 Then the condition \eqref{cc}, namely $\phi(0)= \pm \pi/2$ and
 $v(0) \neq -\sin(\phi(0))$ is equivalent to $f$ having a cuspidal
 cross-cap singularity. In the notation of Section \ref{harmonicintro},
 we have  $f_{cmc}= f^-$, and from \eqref{eq:cmc} this satisfies the equations:
 \[
 f^-_x = N \times N_y \,  -N_x,
 \quad \quad
 f^-_y = -N \times N_x \,  -N_y.
\]
Since $\phi(0)=\pm \pi/2$, we have $N_x \perp N_y$ at $z=0$, hence:
\[
 f^-_x(0) \parallel N_x(0), \quad  \hbox{and} \quad f^-_y(0) \parallel N_y(0).
\]
 Since $\gamma(x)= f^-(x,0)$ is locally
 the equatorial curve, the tangent $f^-_x(0)$ to the curve at $p$ is parallel to $N_x(0)$, which is
 perpendicular to $E_3$.  Conversely, if the tangent is perpendicular to $E_3$ then, by the same equations, $N_y$ must be perpendicular to $N_x$, so $\phi(0)=\pm \pi/2$. The other condition, $v(0) \neq -\sin(\phi(0))$, holds because we assumed that the singular set is non-degenerate.

 For the swallowtail condition, again using the setup  \eqref{cauchydata} of  Theorem \ref{singcauchythm}, we have:
 \[
 N_x(0) = v(0)V(0), \quad \quad N_y(0) = -\cos(\phi(0)) V(0) + \sin (\phi(0)) E_3,
 \]
 and $N(0) \times V(0) = E_3$.  This give:
 \beqas
 f^-_y(0) &=& - N \times N_x - N_y \\
 &=&  -(v(0)+\sin(\phi(0))) E_3 + \cos (\phi(0)) V(0),
 \eeqas
 so $v(0) = -\sin(\phi(0))$ if and only if 
 $f^-_y(0)$ is parallel to $V(0)$, or
 equivalently, since $f^-$ is conformally immersed, $f^-_x(0)$ is parallel to $f_y \times N$, 
 i.e. parallel to $V(0) \times N(0) = E_3$. 
 For the converse direction, if the equatorial curve is parallel to $E_3$, the second condition for the swallowtail, $0 \neq \phi(0) \neq \pm \pi/2$, holds because $f^-$ is regular, so $f^-_y$ above cannot be zero.
\end{proof}
\subsection{Spacelike maximal discs with natural boundary} \label{sect:disc}
As shown in Section \ref{generalizedsection}, there are no complete spacelike maximal surfaces in
$\Nili$.
The simplest ``global'' regular surface one can contemplate can be defined as follows:
\begin{definition}
Let $\SSS^2_+ := \{ x \in \SSS^2 ~|~ x_3 > 0\}$, and denote its closure by $\overline{\SSS^2_+}$.
Let $\D$ be a contractible domain in $\C$ and $N: \D \to \SSS^2$ be a nowhere holomorphic harmonic map, with the property that $N$ maps a closed set contained in $\D$ diffeomorphically
to  $\overline{\SSS^2_+}$. Denote
by $\Omega$ the open subset $N^{-1}(\SSS^2_+)$. Then the corresponding generalized maximal surface
$f: \Omega \cup \partial \Omega \to \Nili$ is called a \emph{(spacelike) maximal disc with null boundary}.
\end{definition}
Note that the restriction $f|_\Omega: \Omega \to \Nili$ is a regular maximal surface, which extends to a generalized maximal surface $f: \D \to \Nili$, and that the boundary $\partial \Omega$ is contained
in the singular set of $f$.
\begin{example} \label{ex:disc}
 We can construct examples of a maximal disc with null boundary by taking the DPW potential for  a CMC sphere, of the form $\xi = {\tiny{\bbar 0 & 1 \\ 0 & 0 \ebar}} \lambda^{-1} \dd z$ and deforming it
 slightly:
\[ \xi(z) = \bbar 0 & 1 \\ -\epsilon(z) & 0 \ebar \lambda^{-1} \dd z, \]
where $\epsilon(z)$ is some small holomorphic function, for instance a small constant. An example with $\epsilon(z) = 0.04$ is computed and shown
in Figure \ref{figdisc}.
\end{example}
 \begin{figure}[htb]
\centering
$
\begin{array}{ccc}
\includegraphics[height=30mm]{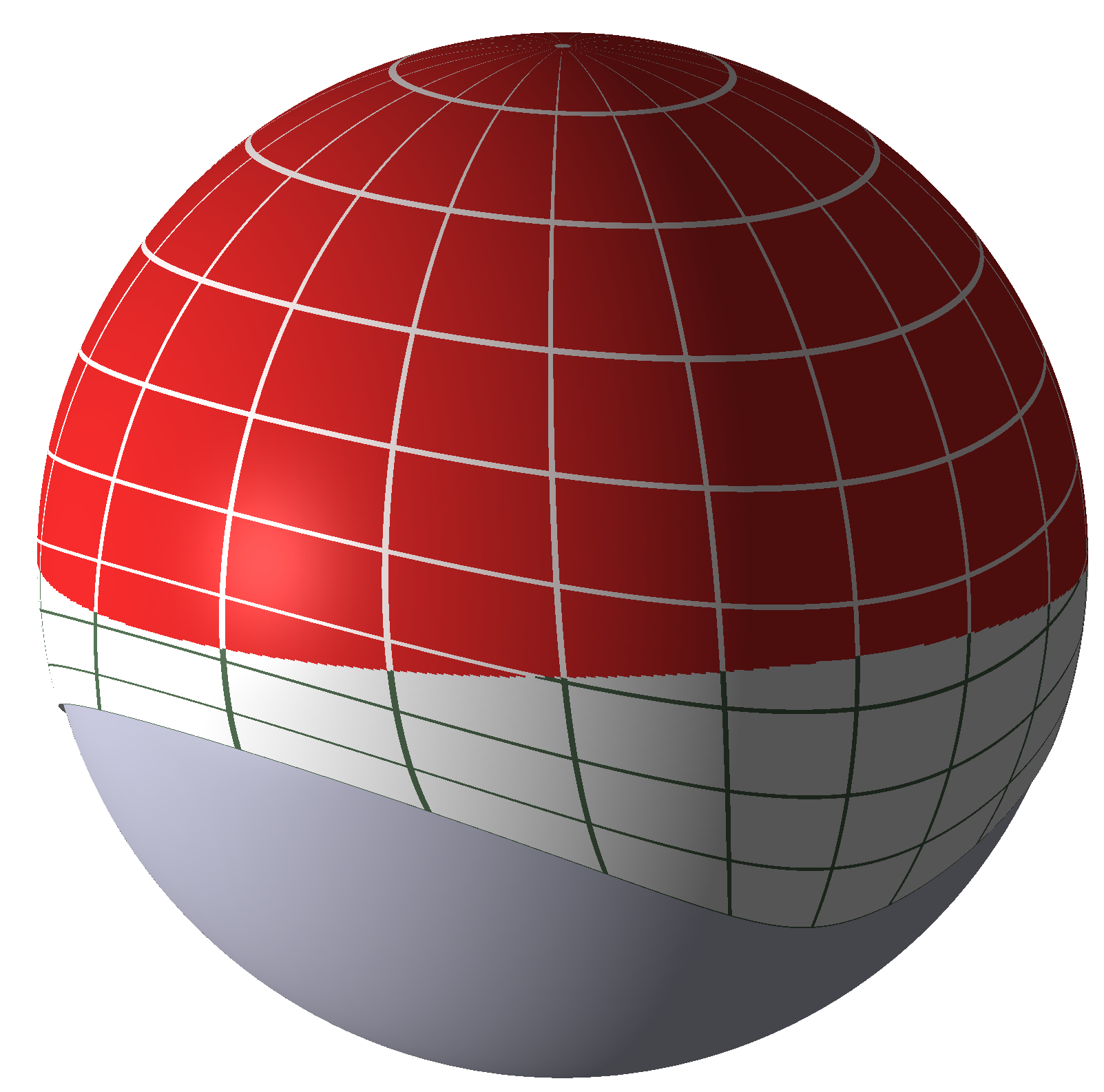}
 & \quad
\includegraphics[height=28mm]{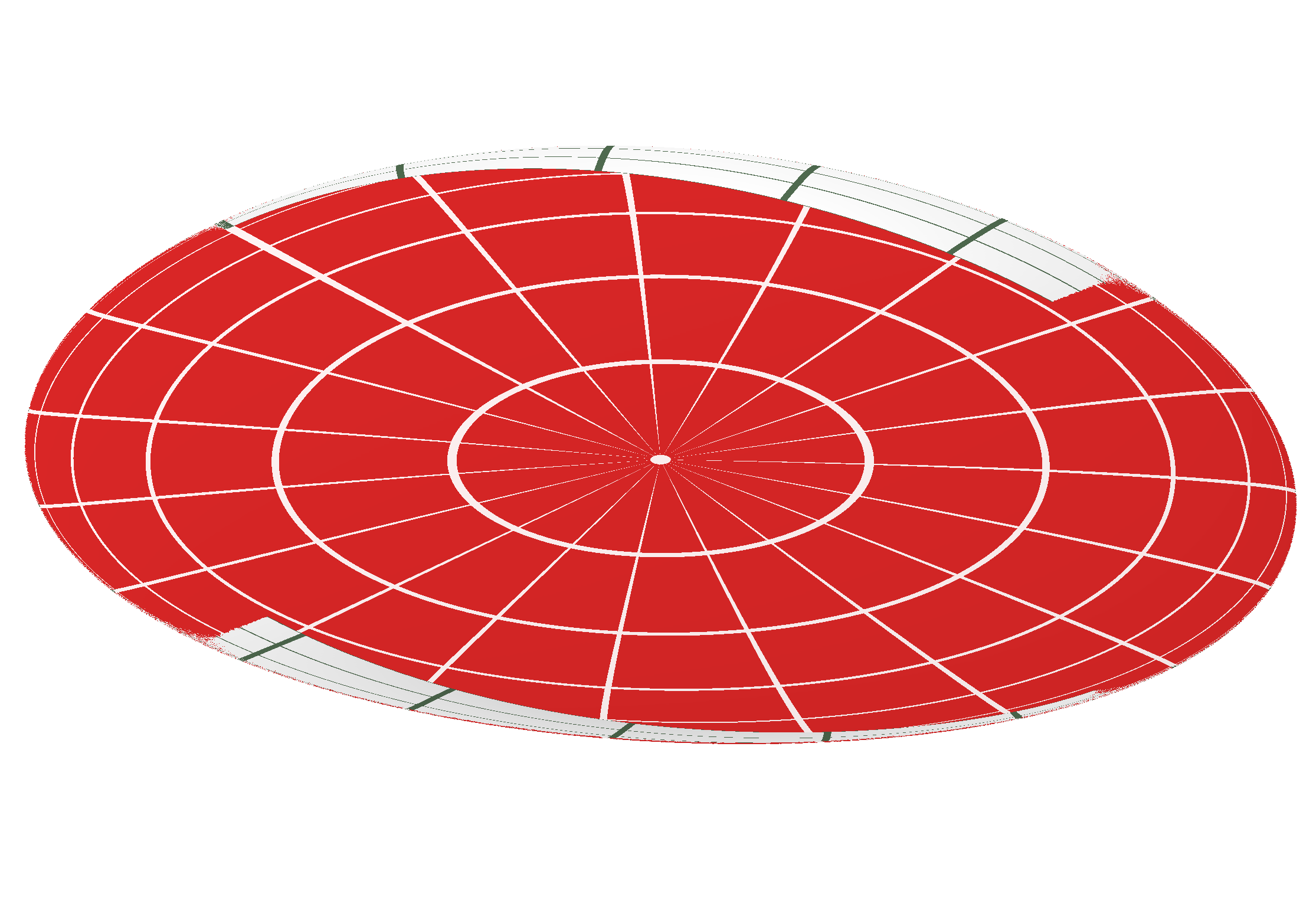} & \quad
\includegraphics[height=30mm]{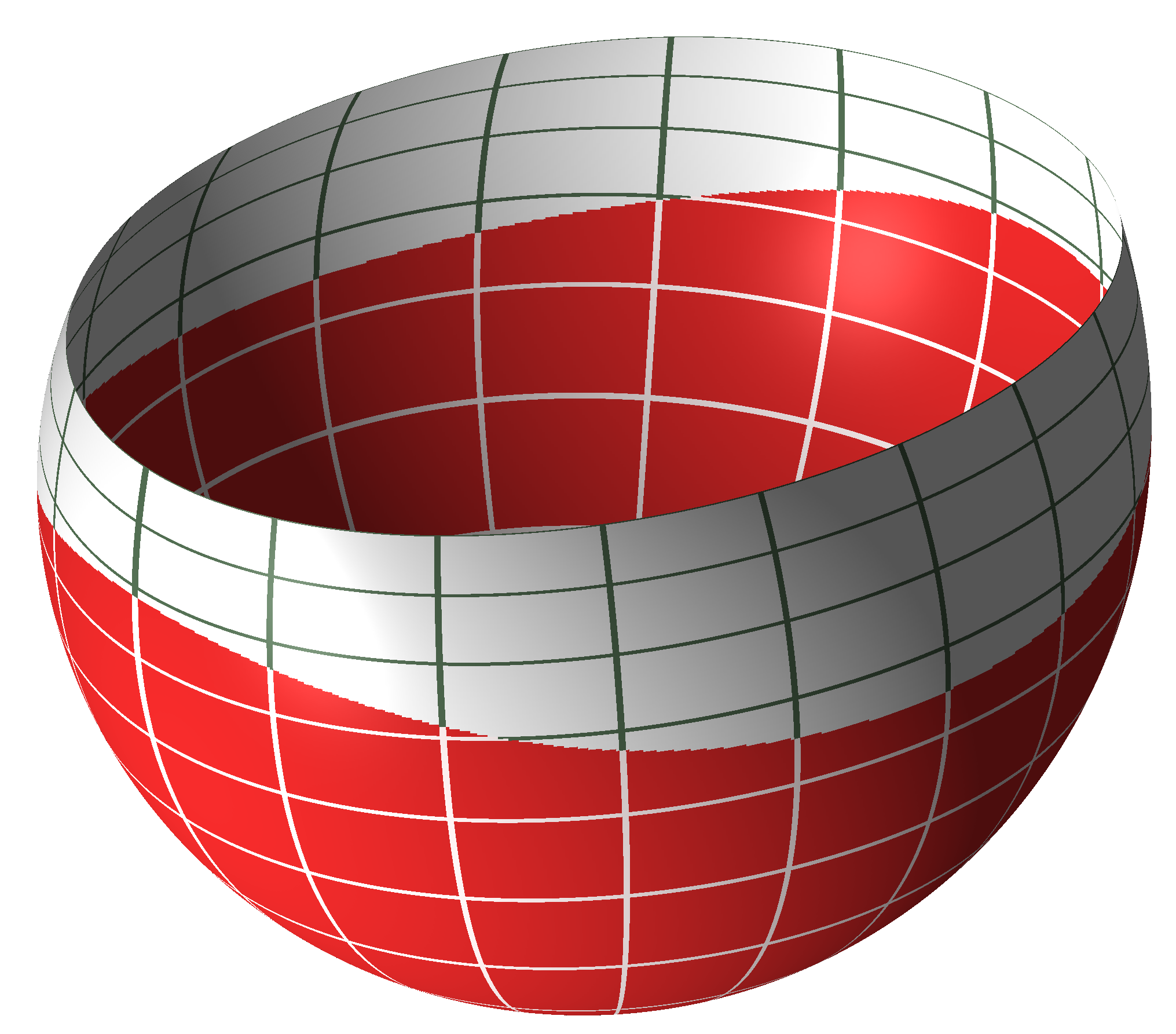}
\end{array}
$
\caption{Example \ref{ex:disc}. Left: Gauss Map. Middle: Maximal surface. Right: CMC surface.}
\label{figdisc}
\end{figure}
The example shown in Figure \ref{figdisc} has four cuspidal cross-cap singularities on the singular curve. These correspond to the four points on the equatorial curve of the
corresponding CMC surface (right) where the tangent is horizontal, as described in Theorem \ref{thm:equatorial}.
This illustrates a general principle:
\begin{theorem}  \label{thm:twocrosscap}
 Let $f: \Omega \cup \partial \Omega \to \Nili$ be a spacelike maximal disc with null boundary. Suppose that the boundary singular curve is non-degenerate.
 Then the boundary curve has at least two cuspidal cross-cap singularities.
\end{theorem}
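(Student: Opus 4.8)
The plan is to reduce the statement to an elementary fact about critical points of a function on a circle, using Theorem~\ref{thm:equatorial} to translate between cuspidal cross-caps on $f$ and the geometry of the equatorial curve $C_{cmc}$ on the associated CMC surface. First I would pin down the global structure: since $N$ restricts to a diffeomorphism from a closed set onto $\overline{\SSS^2_+}$, its restriction to that set carries $\partial \overline{\SSS^2_+}$ (the equator) back to a single embedded closed curve, so $\partial\Omega = N^{-1}(\mathrm{equator})$ is diffeomorphic to $\SSS^1$. Because $N$ is a local diffeomorphism near $\partial\Omega$ (so $\partial\Omega$ is a regular curve in $\D$) and $f_{cmc}$ is a regular immersion, the image $\gamma := f_{cmc}\!\mid_{\partial\Omega}$ is a regular closed curve in $\real^3 = \mathfrak{su}(2)$; this is precisely the equatorial curve $C_{cmc}$ of Section~\ref{sect:equatorial}.

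Next I would consider the height function $\rho(t) := \langle \gamma(t), E_3\rangle$ on the circle. Being a smooth function on a compact connected $1$-manifold, $\rho$ attains a maximum and a minimum, and at any such critical point $t_*$ we have $\rho'(t_*) = \langle \gamma'(t_*), E_3\rangle = 0$; since $\gamma$ is regular, this says the tangent line to $C_{cmc}$ at $\gamma(t_*)$ is (a genuine line) perpendicular to $E_3$. As the boundary singular curve is assumed non-degenerate, Theorem~\ref{thm:equatorial} applies at the corresponding singular point of $f$: a horizontal tangent forces a cuspidal cross-cap there (it cannot be a swallowtail, which would require the tangent \emph{parallel} to $E_3$, and a nonzero vector cannot be both). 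Hence each critical point of $\rho$ yields a cuspidal cross-cap of $f$ on $\partial\Omega$.

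It remains to guarantee that the maximum and minimum are attained at two \emph{distinct} points, i.e.\ that $\rho$ is non-constant. If $\rho$ were constant, the whole curve $C_{cmc}$ would lie in a horizontal plane, so by Theorem~\ref{thm:equatorial} every point of the (non-degenerate) boundary singular curve would be a cuspidal cross-cap; this is impossible, since cuspidal cross-caps are isolated singular points and $\partial\Omega$, a $1$-manifold, has no isolated points. Therefore $\rho$ is non-constant, its maximum and minimum occur at distinct points $p_1 \neq p_2 \in \partial\Omega$, and $f$ has a cuspidal cross-cap at each, which is the assertion.

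The main obstacle is the last paragraph: one must genuinely exclude the degenerate "horizontal equatorial curve" configuration and, more delicately, make sure that at the height extrema Theorem~\ref{thm:equatorial} really delivers a cuspidal cross-cap rather than a more degenerate singularity. The clean route is to build the required transversality into the hypothesis "non-degenerate boundary singular curve"; alternatively one can verify directly, from the formula~\eqref{Bhatformula} for $\hat B$ together with the coordinate normalization, that the higher-order condition $\Im(\hat B') \neq 0$ holds at the extrema (equivalently $\phi'(0) \neq 0$ in the Cauchy-data description), so that the singularity is exactly a cuspidal cross-cap. Everything else is routine.
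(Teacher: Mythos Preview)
Your approach is essentially identical to the paper's: pass to the associated CMC surface, look at the height function $\langle \gamma, E_3\rangle$ on the closed equatorial curve, and invoke Theorem~\ref{thm:equatorial} at its extrema. You are in fact more careful than the paper, which neither argues that the two extrema are distinct nor raises the higher-order condition $\Im(\hat B')\neq 0$; it simply cites Theorem~\ref{thm:equatorial} as stated and stops.
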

\begin{proof}
 The corresponding CMC surface in $\real^3$ is regular, since we assumed the harmonic Gauss map is nowhere holomorphic. The non-degeneracy assumption implies that the boundary curve $C= \partial \Omega$ is a regular curve in the open set $\D$ containing $\Omega \cup \partial \Omega$. Since $C$ is the inverse image under the Gauss map $N$ of the equator, it is a closed curve, and hence $f_{cmc}(C)$ is a regular closed curve in $\real^3$. The third component function of the curve
 $\gamma: C \to \real^3$ given by $f_{cmc}|_C$ must have at least one local maximum and at least one local minimum. At these points the tangent is parallel to the $E_1E_2$-plane. Hence the claim follows from  Theorem \ref{thm:equatorial}.
\end{proof}
\begin{remark} Theorem \ref{thm:twocrosscap} rules out the existence of a spacelike maximal disc with a cuspidal edge as boundary. Note that for the case where the CMC surface is a sphere the equatorial curve is everywhere parallel to the $E_1 E_2$-plane. In this case the singular curve is everywhere degenerate (since $B(z)=0$). The corresponding maximal surface is a flat disc with a fold singularity at the boundary.
\end{remark}

\bibliographystyle{plain}
 \bibliography{mybib}

\begin{thebibliography}{10}

\bibitem{BerTai2005}
D~Berdinskii and I~Taimanov.
\newblock Surfaces in three-dimensional {L}ie groups.
\newblock {\em Siberian Math. J.}, 46:1005--1019, 2005.

\bibitem{bobenko1994}
A~I Bobenko.
\newblock Surfaces in terms of 2 by 2 matrices. {O}ld and new integrable cases.
\newblock In {\em Harmonic maps and integrable systems}, number E23 in Aspects
  Math., pages 83--127. Vieweg, 1994.

\bibitem{spherical}
D~Brander.
\newblock Spherical surfaces.
\newblock {\em Exp. Math.}, 25(3):257--272, 2016.

\bibitem{mincmc}
D~Brander and JF~Dorfmeister.
\newblock Deformations of constant mean curvature surfaces preserving
  symmetries and the {H}opf differential.
\newblock {\em Ann. Sc. Norm. Super. Pisa Cl. Sci. (5)}, XIV:1--31, 2015.

\bibitem{cartier2011}
S~Cartier.
\newblock {\em Surfaces des espaces homog\`enes de dimension 3}.
\newblock PhD Thesis. Universit\'e Paris-Est, 2011.

\bibitem{cmo2017}
A~Cintra, F~Mercuri, and I~Onnis.
\newblock Minimal surfaces in {L}orentzian {H}eisenberg group and
  {D}amek–{R}icci spaces via the {W}eierstrass representation.
\newblock {\em J. Geom. Physics}, 121:396--412, 2017.

\bibitem{daniel2011}
B~Daniel.
\newblock The {G}auss map of minimal surfaces in the {H}eisenberg group.
\newblock {\em Int. Math. Res. Not.}, pages 674--695, 2011.

\bibitem{dik2016}
J~Dorfmeister, J~Inoguchi, and S-P Kobayashi.
\newblock A loop group method for minimal surfaces in the three-dimensional
  {H}eisenberg group.
\newblock {\em Asian J. Math.}, 20:409--448, 2016.

\bibitem{DorPW}
J~Dorfmeister, F~Pedit, and H~Wu.
\newblock Weierstrass type representation of harmonic maps into symmetric
  spaces.
\newblock {\em Comm. Anal. Geom.}, 6:633--668, 1998.

\bibitem{figueroa1999}
C~Figueroa.
\newblock Weierstrass formula for minimal surfaces in {H}eisenberg group.
\newblock {\em Pro Mathematica}, 13:71--85, 1999.

\bibitem{figueroa2007}
C~Figueroa.
\newblock On the {G}auss map of a minimal surface in the {H}eisenberg group.
\newblock {\em Matematica Contemporanea}, 33:139--156, 2007.

\bibitem{fsuy}
S~Fujimori, K~Saji, M~Umehara, and K~Yamada.
\newblock Singularities of maximal surfaces.
\newblock {\em Math. Z.}, 259:827--848, 2008.

\bibitem{hos1982}
D~A Hoffman, R~Osserman, and R~Schoen.
\newblock On the {G}auss map of complete surfaces of constant mean curvature in
  {${\bf R}^3$} and {${\bf R}^4$}.
\newblock {\em Comment. Math. Helvetici}, 57:519--531, 1982.

\bibitem{Kenmotsu1979}
K~Kenmotsu.
\newblock Weierstrass formula for surfaces of prescribed mean curvature.
\newblock {\em Math. Ann.}, 245(2):89--99, 1979.

\bibitem{kiko2022}
H~Kiyohara and S~Kobayashi.
\newblock Timelike minimal surfaces in the three-dimensional {H}eisenberg
  group.
\newblock {\em J. Geom. Anal.}, 32(8):Paper No. 225, 2022.

\bibitem{krsuy}
M~Kokubu, W~Rossman, K~Saji, M~Umehara, and K~Yamada.
\newblock Singularities of flat fronts in hyperbolic space.
\newblock {\em Pacific J. Math.}, 221:303--351, 2005.

\bibitem{hlee2011}
H~Lee.
\newblock Maximal surfaces in {L}orentzian {H}eisenberg space.
\newblock {\em Differential Geom. Appl.}, 29:73--84, 2011.

\bibitem{lmm2011}
JH~Lira, M~Melo, and F~Mercuri.
\newblock A {W}eierstrass representation for minimal surfaces in 3-dimensional
  manifolds.
\newblock {\em Results. Math.}, 60:311--323, 2011.

\bibitem{milnor1976}
J~Milnor.
\newblock Curvatures of left invariant metrics on {L}ie groups.
\newblock {\em Adv. Math.}, 21:293--329, 1976.

\bibitem{rahmani2006}
N~Rahmani and S~Rahmani.
\newblock Lorentzian geometry of the {H}eisenberg group.
\newblock {\em Geom. Dedicata}, 118:133--140, 2006.

\bibitem{rahmani1992}
S~Rahmani.
\newblock M\'etriques de {L}orentz sur les groupes de {L}ie unimodulaires, de
  dimension trois.
\newblock {\em J. Geom. Phys.}, 9:295--302, 1992.

\bibitem{uy2006}
M~Umehara and K~Yamada.
\newblock Maximal surfaces with singularities in minkowski space.
\newblock {\em Hokkaido Math. J.}, 35:13--40, 2006.

\bibitem{Wu1999}
Hongyou Wu.
\newblock A simple way for determining the entials for harmonic maps.
\newblock {\em Ann. Global Anal. Geom.}, 17(2):189--199, 1999.

\end{thebibliography}

 \end{document}